%
%
%
%
%

\documentclass[10pt]{amsart}
\usepackage[margin=1in,letterpaper,portrait]{geometry}
\usepackage{graphicx,xypic}
\usepackage{latexsym,amssymb,verbatim}
 \usepackage{multirow}

\theoremstyle{plain}
   \newtheorem{theorem}{Theorem}[section]
   \newtheorem{proposition}[theorem]{Proposition}
   \newtheorem{lemma}[theorem]{Lemma}
   \newtheorem{corollary}[theorem]{Corollary}
   \newtheorem{conjecture}[theorem]{Conjecture}
   
   \newtheorem*{theorem*}{Theorem}
   \newtheorem*{reinterp}{Proposition~\ref{CSP-proposition}$'$} 
\theoremstyle{definition}
   \newtheorem{definition}[theorem]{Definition}
   \newtheorem{example}[theorem]{Example}
   
   \newtheorem{question}[theorem]{Question}
   \newtheorem{remark}[theorem]{Remark}

\numberwithin{equation}{section}

\newcommand\qbin[3]{\left[\begin{matrix} #1 \\ #2 \end{matrix} \right]_{#3}}
\newcommand\ugpark[1]{{#1}[\FF_{q^m}^n]}

\newcommand\Symm{\mathfrak{S}}

\newcommand\xx{{\mathbf{x}}}

\newcommand\rank{\operatorname{rank}}
\newcommand\one{\mathbf{1}}
\newcommand\Sym{\operatorname{Sym}}

\newcommand\Ind{\operatorname{Ind}}
\newcommand\Ann{\operatorname{Ann}}
\newcommand\Cat{\operatorname{Cat}}
\newcommand\Hilb{\operatorname{Hilb}}
\newcommand\Frac{\operatorname{Frac}}

\newcommand\Aut{\operatorname{Aut}}

\newcommand\Mat{\operatorname{Mat}}
\newcommand\spanof{\operatorname{span}}
\newcommand\triv{\operatorname{triv}}

\newcommand\mm{\mathfrak{m}}
\newcommand\nn{\mathfrak{n}}
\newcommand\gr{\mathfrak{gr}}

\newcommand\CC{{\mathbb{C}}}
\newcommand\ZZ{{\mathbb{Z}}}
\newcommand\NN{{\mathbb{N}}}

\newcommand\RR{{\mathbb{R}}}
\newcommand\FF{{\mathbb{F}}}
\newcommand\Gr{{\mathbb{G}}}
\newcommand\Proj{{\mathbb{P}}}

\begin{document}

\title[Invariants of $GL_n(\FF_q)$ in polynomials mod
Frobenius powers] 
{Invariants of $GL_n(\FF_q)$ in polynomials mod
Frobenius powers}

\author{J. Lewis}
\author{V. Reiner}
\author{D. Stanton}
\email{(jblewis,reiner,stanton)@math.umn.edu}
\address{School of Mathematics\\
University of Minnesota\\
Minneapolis, MN 55455, USA}

\thanks{Work partially supported by NSF grants DMS-1148634 and
DMS-1001933.}

\keywords{finite general linear group, Frobenius power, 
Dickson invariants, cofixed, fixed quotient, coinvariant, divided power invariants,
reflection group, Catalan, parking, cyclic sieving}

\begin{abstract}
Conjectures are given for Hilbert series related to polynomial invariants of finite general linear groups, one for invariants mod Frobenius powers of the irrelevant ideal, one for cofixed spaces of polynomials.   
\end{abstract}

\date{\today}

\maketitle


\section{Introduction}
\label{intro-section}

This paper proposes two related conjectures 
in the invariant theory of $GL_n(\FF_q)$,
motivated by the following celebrated result of L.E. Dickson \cite{Dickson}; see also \cite[Thm. 8.1.1]{Benson}, \cite[Thm. 8.1.5]{Smith}.
\begin{theorem*}
When $G:=GL_n(\FF_q)$
acts via invertible linear substitutions of variables
on the polynomial algebra $S=\FF_q[x_1,\ldots,x_n]$,
the $G$-invariants form a polynomial subalgebra
$
S^G=\FF_q[D_{n,0}, D_{n,1}, \ldots, D_{n,n-1}].
$
\end{theorem*}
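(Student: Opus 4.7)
The plan is to follow the standard approach via the \emph{Dickson polynomial}
$$f_n(T) := \prod_{v \in \FF_q^n}\bigl(T - (v_1 x_1 + \cdots + v_n x_n)\bigr),$$
regarded as a polynomial in $T$ with coefficients in $S$. Since $G = GL_n(\FF_q)$ permutes the linear forms $\ell_v := v_1 x_1 + \cdots + v_n x_n$ indexed by $v \in \FF_q^n$, the coefficients of $f_n(T)$ lie in $S^G$. Because $\{\ell_v : v \in \FF_q^n\}$ is a finite $\FF_q$-subspace of $S$, a standard fact about products of the form $\prod_{v \in V}(T-v)$ shows that $f_n(T)$ is an $\FF_q$-linearized polynomial in $T$, of the form
$$f_n(T) \;=\; \sum_{i=0}^{n} (-1)^{n-i} D_{n,i}\, T^{q^i},$$
with $D_{n,n} = 1$ and each $D_{n,i} \in S^G$ homogeneous of degree $q^n - q^i$. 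This defines the Dickson invariants.

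Next I would observe that each variable $x_j = \ell_{e_j}$ is a root of $f_n(T)$, so $x_j$ satisfies a monic equation of degree $q^n$ over $R := \FF_q[D_{n,0}, \ldots, D_{n,n-1}]$, making $S$ integral and module-finite over $R$. This forces $\dim R = \dim S = n$; since $R$ is generated by $n$ elements, the $D_{n,i}$ must be algebraically independent. Hence $R$ is a polynomial algebra whose generators have degrees multiplying to $\prod_{i=0}^{n-1}(q^n - q^i) = |GL_n(\FF_q)|$.

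Finally I would upgrade $R \subseteq S^G$ to equality by a fraction-field dimension count. Since $R$ is a polynomial subring over which $S$ is module-finite, graded Cohen--Macaulayness makes $S$ a free $R$-module, and a Hilbert series comparison pins its rank at $\prod_i (q^n - q^i) = |G|$; in particular $[\Frac(S):\Frac(R)] = |G|$. Combined with the Galois-theoretic equality $[\Frac(S):\Frac(S^G)] = |G|$ (from faithfulness of the $G$-action), the tower $\Frac(R) \subseteq \Frac(S^G) \subseteq \Frac(S)$ collapses to $\Frac(R) = \Frac(S^G)$. Because $R$ is a polynomial ring, hence integrally closed, and $S^G$ is integral over $R$ inside $\Frac(R)$, we conclude $S^G = R$. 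The main obstacle is precisely this last comparison: everything relies on the happy numerical coincidence $\prod_i(q^n - q^i) = |GL_n(\FF_q)|$, which is what allows the Cohen--Macaulay/integrality machinery to close the gap between $R$ and $S^G$.
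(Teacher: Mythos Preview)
Your argument is correct and is essentially the standard modern proof of Dickson's theorem (as in Benson \cite{Benson} or Smith \cite{Smith}): produce the $D_{n,i}$ as coefficients of the $q$-linearized polynomial $\prod_{v}(T-\ell_v)$, use the resulting integral equation to see that $R=\FF_q[D_{n,0},\ldots,D_{n,n-1}]$ is a polynomial ring over which $S$ is finite and free of rank $\prod_i(q^n-q^i)=|G|$, and then close the gap $R\subseteq S^G$ by the fraction-field/Galois degree count plus normality of $R$.

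There is nothing to compare against here: the paper does \emph{not} prove this statement. It is quoted as background, attributed to Dickson with pointers to the textbook proofs just mentioned. The only later contact with a proof is the remark (in Section~\ref{module-over-invariants-section}) citing Steinberg's argument, which goes further by exhibiting the explicit monomial $S^G$-basis $\{x^\alpha:0\le\alpha_i\le q^n-q^{i-1}-1\}$ for $S$; that is a sharpening of the freeness step you invoke abstractly via Cohen--Macaulayness.

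One cosmetic point: your sign $(-1)^{n-i}$ is harmless but unnecessary, since the set of linear forms $\{\ell_v\}$ is closed under negation and hence $\prod_v(T-\ell_v)=\prod_v(T+\ell_v)$; the paper's convention simply writes $\sum_i D_{n,i}\,t^{q^i}$ with no signs.
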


\noindent
Here the {\it Dickson polynomials}
$D_{n,i}$ are the coefficients in the expansion
$
\prod_{\ell(\xx)} (t + \ell(\xx)) = \sum_{i=0}^{n} D_{n,i} t^{q^i} 
$
where the product runs over all $\FF_q$-linear forms
$\ell(\xx)$ in the variables $x_1,\ldots,x_n$
.
In particular, $D_{n,i}$ is 
homogeneous of degree $q^n-q^i$, so that Dickson's Theorem 
implies this {\it Hilbert series} formula:
\begin{equation}
\label{Dickson-Hilbert-series}
\Hilb(S^G,t):= \sum_{d \geq 0} \dim_{\FF_q} (S^G)_d \,\, t^d
=\prod_{i=0}^{n-1} \frac{1}{1-t^{q^n-q^i}}.
\end{equation}

Our main conjecture gives the Hilbert series for the $G$-invariants
in the quotient ring 
$
Q:=S/\mm^{[q^m]}
$ 
by an iterated {\it Frobenius power}
$
\mm^{[q^m]}:=(x_1^{q^m},\ldots,x_n^{q^m})
$
of the \emph{irrelevant ideal} $\mm=(x_1,\ldots,x_n)$.
The ideal $\mm^{[q^m]}$ is $G$-stable, and hence the $G$-action
on $S$ descends to an action on the quotient $Q$.

\begin{conjecture}
\label{mod-powers-conjecture}
The $G$-fixed subalgebra $Q^G$ has Hilbert series
$
\Hilb( (S/\mm^{[q^m]})^G, t)=C_{n,m}(t)
$ 
where
\begin{equation}
\label{Catalan-with-m-definition}
C_{n,m}(t):= \sum_{k=0}^{\min(n,m)} t^{(n-k)(q^m-q^k)} \qbin{m}{k}{q,t}.\\
\end{equation}
\end{conjecture}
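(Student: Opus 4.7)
The plan is to first reduce the Hilbert series calculation to one about the image of $S^G$ inside $S/\mm^{[q^m]}$, then to match that image's Hilbert series against $C_{n,m}(t)$ using Dickson's theorem. The natural composite $S^G \hookrightarrow S \twoheadrightarrow S/\mm^{[q^m]}$ lands inside $(S/\mm^{[q^m]})^G$, and the first step is to show it is surjective onto $(S/\mm^{[q^m]})^G$, so that $(S/\mm^{[q^m]})^G \cong S^G/(S^G \cap \mm^{[q^m]})$. In characteristic coprime to $|G|$ this is automatic; here we are in genuinely modular characteristic, and the surjectivity is equivalent to vanishing of the connecting map $\delta \colon (S/\mm^{[q^m]})^G \to H^1(G, \mm^{[q^m]})$ in the long exact sequence of $G$-cohomology.

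For surjectivity I would proceed by induction on $m$, using the short exact sequence
\[
0 \to \mm^{[q^{m-1}]}/\mm^{[q^m]} \to S/\mm^{[q^m]} \to S/\mm^{[q^{m-1}]} \to 0
\]
of graded $G$-modules.  The subobject is generated by $q^{m-1}$-th powers of linear forms; since $g \cdot x_i^{q^{m-1}} = (g \cdot x_i)^{q^{m-1}} = \sum_j g_{ji} x_j^{q^{m-1}}$ for $g = (g_{ij}) \in GL_n(\FF_q)$, multiplication by $x_i^{q^{m-1}}$ exhibits a degree-shifted $G$-equivariant copy of $S/\mm^{[q]}$ inside $\mm^{[q^{m-1}]}/\mm^{[q^m]}$.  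This Frobenius-twisted structure should permit both the propagation of invariant lifts and a reduction of the $H^1$-vanishing to smaller $m$.

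Granting surjectivity, the second step is to verify
\[
\Hilb(S^G, t) - \Hilb(S^G \cap \mm^{[q^m]}, t) = C_{n,m}(t),
\]
with the first term given by~\eqref{Dickson-Hilbert-series}.  For the second term I would analyze which polynomials in $D_{n,0},\ldots,D_{n,n-1}$ have image in $\mm^{[q^m]}$ when viewed back in $S$.  I expect a natural stratification by an integer $k \in \{0,\ldots,\min(n,m)\}$ measuring how many ``low-index'' Dickson factors survive the Frobenius truncation: the remaining $n-k$ ``top-index'' Dickson factors should contribute the prefactor $t^{(n-k)(q^m-q^k)}$, while the surviving $k$ factors contribute the $\qbin{m}{k}{q,t}$ piece.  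The explicit leading-monomial expressions for the $D_{n,i}$ in terms of $x_1,\ldots,x_n$ should make this combinatorially tractable.

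The main obstacle is unquestionably the surjectivity / $H^1$-vanishing in step one: modular invariant cohomology of $GL_n(\FF_q)$ is famously delicate, and such vanishing statements do not hold automatically.  The small cases I can verify by hand---$n=1$ with arbitrary $m$, and $n=q=2$ with $m \le 2$, where the invariants in $(S/\mm^{[q^m]})^G$ are precisely the images of certain Dickson monomials (for instance $1, D_{2,1}, D_{2,0}, D_{2,1}^2, D_{2,1}^3$ in the case $n=q=m=2$)---all support the lifting claim and match $C_{n,m}(t)$.  If lifting should fail in general, step one would need to be replaced by a direct basis construction for $(S/\mm^{[q^m]})^G$ indexed by the combinatorial data implicit in the sum defining $C_{n,m}(t)$.
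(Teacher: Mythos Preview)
The statement you are attempting to prove is a \emph{conjecture} in the paper; the paper does not prove it in general, only establishing the cases $m \le 1$ (Proposition~\ref{small-m-proposition}) and $n \le 2$ (Appendix~\ref{n=2-section}, via Corollary~\ref{n=2-equivalence-corollary}) by direct, case-specific arguments.  There is thus no general proof to compare against.

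More to the point, your central step-one claim---surjectivity of $S^G \to (S/\mm^{[q^m]})^G$---is known to be false.  The paper says so explicitly in Section~\ref{questions-and-remark-section}: ``the natural map $S^G \rightarrow Q^G$ is not surjective for $n \geq 2$.''  A concrete instance is $n=m=2$, $q=3$.  Here $S^G = \FF_3[D_{2,0}, D_{2,1}]$ with generators in degrees $8$ and $6$, so $(S^G)_{10} = 0$ since $10$ is not a nonnegative integer combination of $6$ and $8$.  But the paper's example following Corollary~\ref{complementary-degree-corollary}, together with its proof of the $n=2$ case, gives $\Hilb(Q^G,t) = 1 + t^6 + t^8 + t^{10} + t^{12} + t^{16}$, so $(Q^G)_{10}$ is one-dimensional.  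Hence there is a $G$-invariant in $Q$ of degree $10$ which is the image of no element of $S^G$ whatsoever, and the connecting map $\delta$ you hoped would vanish does not.  Your hand check at $n=q=m=2$ landed in a case small enough that surjectivity accidentally holds; it already fails at $q=3$.

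This kills both your $H^1$-vanishing plan and your step-two plan of stratifying Dickson monomials by how many survive the Frobenius truncation.  Your proposed fallback---a direct basis construction for $Q^G$ indexed by the combinatorics of $C_{n,m}(t)$---is precisely what the paper identifies as the missing ingredient: it notes that one ``seems to need recursive constructions, that produce invariants in $n$ variables from invariants in $n-1$ variables,'' and that ``currently, we lack such constructions.''
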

The {\it $(q,t)$-binomial} appearing in \eqref{Catalan-with-m-definition} is a polynomial in $t$,  introduced and studied in \cite{StantonR}, defined by
\begin{equation}
\label{(q,t)-binomial-definition}
\qbin{n}{k}{q,t}
:= \frac{ \Hilb( S^{P_k}, t ) }
        {\Hilb( S^{G}, t) }
= \prod_{i=0}^{k-1} \frac{1-t^{q^n-q^i}}{1-t^{q^k-q^i}}.
\end{equation}
Here $P_k$ is a {\it maximal parabolic subgroup} of $G$ stabilizing
$\FF_q^k \subset \FF_q^n$, so  $G/P_k$
is the {\it Grassmannian} of $k$-planes.

It will be shown in Section~\ref{conjecture-implication-section}
that Conjecture~\ref{mod-powers-conjecture}
implies the following conjecture on the {\it $G$-cofixed space}
(also known as 
the {\it maximal $G$-invariant quotient} or the
{\it $G$-coinvariant space}\footnote{Warning:  
this last terminology is often used for a {\it different} object, 
the quotient ring $S/(D_{n,0},\ldots,D_{n,n-1})$, so we avoid it.}) of $S$.
This is defined to be the quotient $\FF_q$-vector space 
$
S_G:=S/N
$
where $N$ is the 
$\FF_q$-linear span of all polynomials  $g(f)-f$
with $f$ in $S$ and $g$ in $G$.

\begin{conjecture}
\label{cofixed-conjecture}
The $G$-cofixed space of $S=\FF_q[x_1,\ldots,x_n]$
has Hilbert series
$$
\Hilb(S_G,t) 
=\sum_{k = 0}^n t^{n(q^k - 1)}\prod_{i=0}^{k-1} \frac{1}{1 - t^{q^k - q^i}}.
%
$$
\end{conjecture}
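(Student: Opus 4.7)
The plan is to derive Conjecture~\ref{cofixed-conjecture} from Conjecture~\ref{mod-powers-conjecture} by installing a $G$-equivariant Poincar\'e duality on the quotient $Q := S/\mm^{[q^m]}$, and then passing to the limit $m \to \infty$. First, observe that $Q$ is a zero-dimensional Gorenstein $\FF_q$-algebra with one-dimensional socle in top degree $s := n(q^m - 1)$, spanned by the image of $(x_1 \cdots x_n)^{q^m-1}$. Because $GL_n(\FF_q)$ acts on that element by $\det^{q^m-1}$ and $q^m - 1$ is a multiple of $q - 1 = |\FF_q^\times|$, this socle is in fact $G$-trivial. Combining multiplication in $Q$ with the projection onto the socle thus produces a $G$-equivariant perfect pairing $Q_d \times Q_{s-d} \to \FF_q$, identifying $Q_{s-d} \cong (Q_d)^*$ as $G$-modules.

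The second step is to convert the Hilbert series of $Q^G$ supplied by Conjecture~\ref{mod-powers-conjecture} into one for $Q_G$. Degree by degree, the elementary duality $(V_G)^* \cong (V^*)^G$ (valid for any finite-dimensional $G$-module $V$), combined with the pairing above, yields $\dim(Q_G)_d = \dim(Q^G)_{s-d}$, that is
\[
\Hilb(Q_G, t) \,=\, t^{n(q^m-1)}\,\Hilb(Q^G, t^{-1}).
\]
Since $\mm^{[q^m]}$ is concentrated in degrees $\geq q^m$, the surjection $S \twoheadrightarrow Q$ induces an isomorphism $(S_G)_d \cong (Q_G)_d$ in every degree $d < q^m$. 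Assuming Conjecture~\ref{mod-powers-conjecture}, this gives
\[
\Hilb(S_G, t) \,\equiv\, t^{n(q^m-1)}\,C_{n,m}(t^{-1}) \pmod{t^{q^m}},
\]
and letting $m \to \infty$ pins down $\Hilb(S_G, t)$ as a formal power series.

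The last step is to evaluate that limit. Applying the identity $\frac{1 - t^{-A}}{1 - t^{-B}} = t^{B - A}\,\frac{1 - t^A}{1 - t^B}$ to each factor of $\qbin{m}{k}{q,t^{-1}}$, the prefactor $t^{n(q^m-1)}$ absorbs exactly all $q^m$-dependent exponents outside the $(q,t)$-binomials, yielding
\[
t^{n(q^m-1)}\,C_{n,m}(t^{-1}) \,=\, \sum_{k=0}^{\min(n,m)} t^{n(q^k - 1)}\, \qbin{m}{k}{q,t}.
\]
As $m \to \infty$, each $(q,t)$-binomial converges in the $t$-adic topology on formal power series to $\prod_{i=0}^{k-1}(1 - t^{q^k - q^i})^{-1}$, matching the conjectured formula exactly. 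I expect the main obstacle of the implication to be securing the $G$-equivariance of this Gorenstein duality --- essentially the $G$-triviality of the socle of $Q$ --- after which the remainder is routine bookkeeping with $(q,t)$-binomials.
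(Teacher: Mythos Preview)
Your proposal is correct and follows essentially the same route as the paper: Gorenstein duality on $Q=S/\mm^{[q^m]}$ with $G$-trivial socle (the paper's Proposition~\ref{socle-is-invariant-prop}), the general isomorphism $(U_G)^* \cong (U^*)^G$ (Proposition~\ref{fixed-cofixed-duality}), the resulting identity $\Hilb(Q_G,t)=t^{d_0}\Hilb(Q^G,t^{-1})$ and congruence with $\Hilb(S_G,t)$ below degree $q^m$ (Corollary~\ref{complementary-degree-corollary}), and finally the explicit computation of $t^{d_0}C_{n,m}(t^{-1})$ and its $m\to\infty$ limit (Corollary~\ref{first-conj-implies-second-conj-corollary}). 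Your closed form $t^{n(q^m-1)}C_{n,m}(t^{-1})=\sum_k t^{n(q^k-1)}\qbin{m}{k}{q,t}$ is exactly the $\alpha=(n)$ case of the paper's equation~\eqref{reciprocal-of-C-explicitly}; the only minor point is that your claim ``$G$ acts on that element by $\det^{q^m-1}$'' is not literally obvious for all of $G$, but since $Q_{d_0}$ is one-dimensional the action is some power of $\det$, and checking on diagonal matrices (as you implicitly do) suffices---this is precisely the paper's Proof~1 of Proposition~\ref{socle-is-invariant-prop}.
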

\noindent
(Here and elsewhere we interpret empty products as $1$, as in the $k = 0$ summand above.)

\begin{example}
\label{small-n-example}

When $n=0$, 
Conjectures~\ref{mod-powers-conjecture} and \ref{cofixed-conjecture}
have little to say, 
since $S=\FF_q$ has no variables and $G=GL_0(\FF_q)$ is the trivial group.
When $n=1$, both conjectures are easily verified as follows.
The group 
$
G=GL_1(\FF_q)=\FF_q^\times
$
is cyclic of order $q-1$.
A cyclic generator $g$ for $G$
scales the monomials in $S=\FF_q[x]$ via
$
g(x^k)=(\zeta x)^k=\zeta^k x
$
where $\zeta$ is a $(q-1)$st root of unity in $\FF_q$;
$g$ similarly scales the monomial basis elements
$\{1,\overline{x},\overline{x}^2,\ldots,\overline{x}^{q^m-1}\}$ of the quotient ring $Q=S/\mm^{[q^m]}$.  
Hence $\overline{x}^k$ is $G$-invariant in $Q$ 
if and only if $q-1$ divides $k$, so that $Q^G$ has 
$\FF_q$-basis $\{1,\overline{x}^{q-1},\overline{x}^{2(q-1)},\ldots,\overline{x}^{q^m-q},\overline{x}^{q^m-1}\}$.
Therefore
$$
\Hilb(Q^G,t) = \left( 1+t^{q-1}+t^{2(q-1)} + \cdots + t^{q^m-q} \right)
+ t^{q^m-1}
=t^{0}\qbin{m}{1}{q,t} + t^{q^m-1} \qbin{m}{0}{q,t}
= C_{1,m}(t).
$$
For the same reason,  the image of $x^k$ survives as an $\FF_q$-basis
element in the $G$-cofixed quotient $S_G$ if and only if $q-1$ divides $k$.
Hence $S_G$ has $\FF_q$-basis given by the images of
$\{1, x^{q-1}, x^{2(q-1)},\ldots\}$,
so that 
\[
\Hilb(S_G,t) = 1+t^{q-1}+t^{2(q-1)} + \cdots =\frac{1}{1-t^{q-1}} = 1 + \frac{t^{q-1}}{1-t^{q-1}}.
\]
\end{example}

\subsection{The parabolic generalization}
In fact, we will work with generalizations of  
Conjectures~\ref{mod-powers-conjecture} and
\ref{cofixed-conjecture} to a {\it parabolic subgroup}
$P_\alpha$ of $G$ specified by a {\it composition}
$\alpha=(\alpha_1,\ldots,\alpha_\ell)$ of $n$, so that
$
|\alpha|:=\alpha_1+\cdots+\alpha_\ell =n,
$
and $\alpha_i>0$ without loss of generality.  This $P_\alpha$ is the subgroup 
of block upper-triangular invertible matrices
$$
g=\left[
\begin{matrix}
g_1 & *  & \cdots & * \\
0  & g_2 & \cdots & * \\ 
\vdots & \vdots& \ddots& \vdots\\
0     & 0 & \cdots&g_\ell
\end{matrix}
\right]
$$
with diagonal blocks $g_1, \ldots, g_\ell$ of sizes $\alpha_1 \times \alpha_1,  \ldots, \alpha_\ell \times \alpha_\ell$.
A generalization of Dickson's Theorem by Kuhn and Mitchell \cite{KuhnMitchell} 
(related to results of Mui \cite{Mui}, 
and rediscovered by Hewett \cite{Hewett})
asserts that $S^{P_\alpha}$ is again a polynomial algebra,
having Hilbert series given by the following expression,
where we denote partial sums of $\alpha$ by $A_i:=\alpha_1+\cdots+\alpha_i$:
\begin{equation}
\label{parabolic-invariants-hilbert-series}
\Hilb(S^{P_\alpha},t) 
= \prod_{i=1}^\ell \prod_{j=0}^{\alpha_i-1}
\frac{1}
{1 - t^{q^{A_i}-q^{A_{i-1}+j}}}.
 \end{equation}
This leads to a polynomial in $t$ called the  {\it $(q,t)$-multinomial}, also studied in \cite{StantonR}:
\begin{equation}
\label{(q,t)-multinomial-definition}
\qbin{n}{\alpha}{q,t}
:= \frac{ \Hilb(S^{P_\alpha},t) }{ \Hilb(S^G,t) }
= \frac{ 
\displaystyle
\prod_{j=0}^{n-1} (1-t^{q^n-q^j})
}
{
\displaystyle 
\prod_{i=1}^\ell \prod_{j=0}^{\alpha_i-1}
(1 - t^{q^{A_i}-q^{A_{i-1}+j}})
}.
\end{equation}
To state the parabolic versions of the conjectures, we consider
{\it weak compositions}  $\beta=(\beta_1,\ldots,\beta_\ell)$
with $\beta_i \in \ZZ_{\geq 0}$, of a fixed length $\ell$,
and partially order them {\it componentwise},
that is, $\beta \leq \alpha$ if $\beta_i \leq \alpha_i$ for $i=1,2,\ldots,\ell$.
In this situation, let $B_i:=\beta_1+\beta_2+\cdots+\beta_i$.

\vskip.1in
\noindent
{\bf Parabolic Conjecture~\ref{mod-powers-conjecture}.}
{\it
For  $m \geq 0$ and for $\alpha$ a composition of $n$,
the $P_\alpha$-fixed subalgebra $Q^{P_\alpha}$ of the quotient ring
$Q=S/\mm^{[q^m]}$  has Hilbert series $\Hilb(Q^{P_\alpha},t)=C_{\alpha,m}(t)$, where
\begin{equation}
\label{parabolic-Catalan-with-m-definition}
C_{\alpha,m}(t):=
 \sum_{\substack{\beta: 
\beta \leq \alpha\\ |\beta| \leq m}}
 t^{e(m,\alpha,\beta)}  \qbin{m}{\beta,m-|\beta|}{q,t}
\qquad \text{ with } \qquad
e(m,\alpha,\beta):=\sum_{i=1}^\ell  (\alpha_i-\beta_i) (q^m-q^{B_i}).
\end{equation}
}

\noindent
The $\ell=1$ case of Parabolic Conjecture~\ref{mod-powers-conjecture} is
Conjecture~\ref{mod-powers-conjecture}.  
Parabolic Conjecture~\ref{mod-powers-conjecture} also implies the following conjecture, whose $\ell=1$ case is Conjecture~\ref{cofixed-conjecture}.  
\vskip.1in
\noindent
{\bf Parabolic Conjecture~\ref{cofixed-conjecture}.}
{\it
For a composition $\alpha$ of $n$, the $P_\alpha$-cofixed space $S_{P_\alpha}$ 
of $S$ has Hilbert series 
\begin{equation}
\label{parabolic-cofixed-hilbert-series}
\Hilb(S_{P_\alpha},t) =
  \sum_{\substack{\beta:
\beta \leq \alpha}}
  t^{\sum_{i=1}^\ell \alpha_i(q^{B_i}-1)}
 \prod_{i=1}^\ell
 \prod_{j=0}^{\beta_i-1}
 \frac{1}
 {1 - t^{q^{B_i}-q^{B_{i-1}+j}}} .
\end{equation}
}

\subsection{Structure of the paper}
The rest of the paper explains
the relation between Parabolic 
Conjectures~\ref{mod-powers-conjecture} and~\ref{cofixed-conjecture}, 
along with context and evidence for both,
including relations to known results.

Section~\ref{consistency-with-Dickson-section}
explains why Parabolic Conjecture~\ref{mod-powers-conjecture} implies
the Hilbert series \eqref{parabolic-invariants-hilbert-series}
in the limit as $m \rightarrow \infty$, with proof delayed until Appendix~\ref{proof-of-equality-up-to-q^m-prop-section}.
 
Section~\ref{conjecture-implication-section} shows that
Parabolic Conjecture~\ref{mod-powers-conjecture} implies 
Parabolic Conjecture~\ref{cofixed-conjecture}.  It then shows
the reverse implication in the case $n=2$.  Appendix~\ref{n=2-section} proves both 
via direct arguments for $n=2$.

Section~\ref{small-m-section} checks 
Parabolic Conjecture~\ref{mod-powers-conjecture}
for $m=0,1$.

Section~\ref{module-over-invariants-section} explains why
the $P_\alpha$-cofixed space $S_{P_\alpha}$ is a 
finitely generated module of rank one
over the $P_\alpha$-fixed algebra $S^{P_\alpha}$, and why this is consistent with
the form of Parabolic Conjecture~\ref{cofixed-conjecture}.

Section~\ref{CSP-section} concerns some of our original combinatorial
motivation, comparing two $G$-representations:
\begin{itemize}
\item on the graded quotient $Q=S/\mm^{[q^m]}$, versus
\item permuting the points of $(\FF_{q^m})^n$.  
\end{itemize}
These two representations are {\it not} isomorphic; however, we 
will show that they have the same composition factors, that is,
they are {\it Brauer-isomorphic}.  
After extending scalars from $\FF_q G$ to $\FF_{q^m} G$-modules, this 
Brauer-isomorphism holds even taking into account a 
commuting group action $G \times C$, 
where the cyclic group $C=\FF_{q^m}^\times$
is the multiplicative group of $\FF_{q^m}$.
Consistent with this, Parabolic Conjecture~\ref{mod-powers-conjecture}
has a strange implication:  the two representations
have $G$-fixed spaces and $P_\alpha$-fixed spaces which
are {\it isomorphic} $C$-representations.
This assertion is equivalent to the fact that
evaluating $C_{\alpha,m}(t)$ when $t$ is a $(q^m-1)$st root of unity 
exhibits a {\it cyclic sieving phenomenon} 
in the sense of \cite{StantonWhiteR}. 

Section~\ref{questions-and-remark-section} collects some further questions and remarks.

\section*{Acknowledgements}
The authors thank  A. Broer, M. Crabb, N. Kuhn, A. Shepler, L. Smith, and P. Webb for 
valuable suggestions and references, as well as D. Stamate for
computations discussed in
Example~\ref{Stanley-decomposition-example-for-n=3}
regarding Question~\ref{Stanley-decomposition-conjecture}.

\section{Conjecture~\ref{mod-powers-conjecture} implies
\eqref{parabolic-invariants-hilbert-series}}
\label{consistency-with-Dickson-section}

The following proposition is delicate to verify, but serves two purposes,  
explained after its statement.

\begin{proposition}
\label{equality-up-to-q^m-prop}
For any $m \geq 0$ and any composition $\alpha$ of $n$, the power
series 
\[
\Hilb(S^{P_\alpha},t)
=
\prod_{i=1}^\ell \prod_{j=0}^{\alpha_i-1}
\frac{1}{1 - t^{q^{A_i}-q^{A_{i-1}+j}}}
\]
is congruent in $\ZZ[[t]]/(t^{q^m})$ to the polynomial
\[
C_{\alpha,m}(t) 
=
\displaystyle \sum_{\substack{\beta: 
\beta \leq \alpha\\ |\beta| \leq m}}
 t^{e(m,\alpha,\beta)}
 \qbin{m}{\beta,m-|\beta|}{q,t}
\qquad \text{ where } \qquad
e(m,\alpha,\beta) =\sum_{i=1}^\ell  (\alpha_i-\beta_i) (q^m-q^{B_i}).
\]
\end{proposition}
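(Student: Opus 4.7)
The plan is to reduce the claimed congruence to a polynomial identity modulo $t^{q^m}$, then attack it by induction on the length $\ell$ of $\alpha$. Set $D(t) := \prod_{i=1}^\ell \prod_{j=0}^{\alpha_i - 1}(1 - t^{q^{A_i} - q^{A_{i-1}+j}})$, so that the Kuhn--Mitchell formula gives $\Hilb(S^{P_\alpha}, t) = 1/D(t)$. Since $D(t)$ is a polynomial with constant term $1$, it is invertible in $\ZZ[[t]]/(t^{q^m})$, and the statement of the proposition is equivalent to the polynomial congruence
\[
D(t)\cdot C_{\alpha,m}(t) \equiv 1 \pmod{t^{q^m}}.
\]

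To attack this, I would substitute the explicit product formula for $\qbin{m}{\beta,m-|\beta|}{q,t}$ into each summand of $C_{\alpha,m}(t)$. Each summand is a rational function whose denominator shares many factors with $D(t)$; after partial cancellation, $D(t)\cdot C_{\alpha,m}(t)$ takes the form of a sum of rational functions with a common denominator depending only on $m$, not on $\alpha$. The aim is then to recognize this sum as $1$ modulo $t^{q^m}$ via telescoping.

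For the inductive step on $\ell$, I would write $\alpha = (\alpha_1, \ldots, \alpha_{\ell-1}, \alpha_\ell)$, split the sum defining $C_{\alpha,m}(t)$ by the value of the last coordinate $\beta_\ell$, and reorganize so that the inner sum over $\beta' = (\beta_1,\ldots,\beta_{\ell-1}) \leq (\alpha_1, \ldots, \alpha_{\ell-1})$ can be identified, after a shift in $t$-powers, with $C_{\alpha', m'}(t)$ for a suitable modified $m'$, so that the induction hypothesis applies. The base case $\ell = 1$ reduces to the scalar congruence
\[
\prod_{j=0}^{n-1}(1 - t^{q^n - q^j})\cdot \sum_{k=0}^{\min(n,m)} t^{(n-k)(q^m - q^k)} \qbin{m}{k}{q,t} \equiv 1 \pmod{t^{q^m}},
\]
which I would attack by a secondary induction on $n$, leveraging the Pascal-type identities for $(q,t)$-binomials established in \cite{StantonR}.

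The main obstacle I anticipate is the combinatorial bookkeeping of cancellations. Summands indexed by $\beta$ far from $\alpha$ have exponent $e(m,\alpha,\beta)$ so large that they vanish mod $t^{q^m}$, so only summands with $\beta$ close to $\alpha$ can contribute, and these must combine to give exactly $1$. Pinning down the correct telescoping---possibly via a $q$-Chu--Vandermonde style identity---and handling the interaction between factors of the form $1 - t^{q^m - q^{B_i}}$ (which can themselves be of small degree and therefore highly non-trivial mod $t^{q^m}$) and the cleared factors of $D(t)$ is where the real technical work lies.
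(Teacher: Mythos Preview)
Your proposal is an outline rather than a proof, and the inductive strategy you sketch on $\ell$ has a genuine obstruction. When you fix $\beta_\ell = b$ and try to recognize the inner sum over $\beta' = (\beta_1,\ldots,\beta_{\ell-1})$ as $C_{\alpha',m'}(t)$, the last block contributes both a factor $\prod_{j=0}^{b-1}(1-t^{q^{|\beta'|+b}-q^{|\beta'|+j}})^{-1}$ in the multinomial and an exponent term $(\alpha_\ell - b)(q^m - q^{|\beta'|+b})$, each of which depends on $|\beta'|$ and not just on $b$. So the inner sum does not factor through any $C_{\alpha',m'}$ without further stratification by $|\beta'|$, at which point you lose the ability to invoke the induction hypothesis directly. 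Splitting on $\beta_1$ instead almost works (the first block is independent of the rest, and a substitution $s = t^{q^{\beta_1}}$ turns the inner sum into $C_{\alpha',m-\beta_1}(s)$), but then the recombination step requires matching $\Hilb(S^{P_{\alpha'}}, t^{q^{\beta_1}})$ against $\Hilb(S^{P_\alpha}, t)$, and these products have genuinely different exponents, so you are left with a nontrivial outer identity that is not obviously simpler than the original claim.

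The observation in your final paragraph---that only $\beta$ very close to $\alpha$ can have $e(m,\alpha,\beta) < q^m$---is in fact the whole engine of the paper's proof, and the paper exploits it \emph{directly} rather than inductively. Using the inequality $q^a - q^b - q^c \geq 0$ whenever $a > b,c$, one shows that at most one summand $(\alpha_i - \beta_i)(q^m - q^{B_i})$ in $e(m,\alpha,\beta)$ can be nonzero, and if nonzero it must have $\alpha_i - \beta_i = 1$. This pins down the surviving $\beta$ to a list of exactly $L+1$ explicit compositions (where $L$ is the largest index with $A_L \leq m$): one maximal $\widehat{\alpha}$ with $e = 0$, and $L$ others $\widehat{\alpha}^{(k)}$ with $e = q^m - q^{A_k-1}$. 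Each of the corresponding $(q,t)$-multinomials is then simplified mod $t^{q^m}$ by the same inequality (most numerator and denominator factors die), and the $L+1$ resulting expressions visibly telescope to the truncated product for $\Hilb(S^{P_\alpha},t)$. No Pascal recursion, no Chu--Vandermonde, no induction on $\ell$ or $n$ is needed. I would recommend abandoning the inductive scaffolding and instead making your last-paragraph observation precise as a lemma classifying the surviving $\beta$; the rest is then a direct computation.
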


The first purpose of Proposition~\ref{equality-up-to-q^m-prop} is to give
evidence for Parabolic Conjecture~\ref{mod-powers-conjecture},
since it is implied by the conjecture:  the ideal $\mm^{[q^m]}=(x_1^{q^m},\ldots,x_n^{q^m})$ only contains elements
of degree $q^m$ and above, so the $G$-equivariant
quotient map 
$
S \twoheadrightarrow Q=S/\mm^{[q^m]}
$
restricts to $\FF_q$-vector space isomorphisms 
\begin{eqnarray}
S_d &\cong& Q_d
\label{S-and-Q-isomorphic-in-low-degrees}\\
S^{P_\alpha}_d &\cong& Q^{P_\alpha}_d  \notag
\end{eqnarray}
for $0 \leq d \leq q^m-1$.
Consequently one has  
\begin{equation}
\label{invariants-agree-in-low-degree}
\Hilb(S^{P_\alpha},t) \equiv \Hilb\left( Q^{P_\alpha},t \right) \bmod (t^{q^m}).
\end{equation}
In particular, Proposition~\ref{equality-up-to-q^m-prop}
shows why Parabolic Conjecture~\ref{mod-powers-conjecture}
 gives \eqref{parabolic-invariants-hilbert-series}
in the limit as $m \rightarrow \infty$.

Secondly, the precise form of Proposition~\ref{equality-up-to-q^m-prop} will be used in 
the proof of Corollary~\ref{n=2-equivalence-corollary}, asserting the
equivalence of Parabolic Conjectures~\ref{mod-powers-conjecture} 
and \ref{cofixed-conjecture} for $n=2$.

The proof of Proposition~\ref{equality-up-to-q^m-prop} is rather technical, so it is delayed until Appendix~\ref{proof-of-equality-up-to-q^m-prop-section}.

\section{Conjecture~\ref{mod-powers-conjecture} implies
Conjecture~\ref{cofixed-conjecture}}
\label{conjecture-implication-section}

The desired implication 
will come from an examination of the quotient ring
$$ 
Q:=S/\mm^{[q^m]} = \FF_q[x_1,\ldots,x_n]/(x_1^{q^m},\ldots,x_n^{q^m})
$$
as a {\it monomial complete intersection}, and hence a {\it Gorenstein ring}.
Note that $Q$ has monomial basis 
\begin{equation}
\label{monomial-basis-for-quotient}
\{\xx^a:=x_1^{a_1} \cdots x_n^{a_n}\}_{0 \leq a_i \leq q^m-1}
\end{equation}
and that its homogeneous component $Q_{d_0}$ of top degree 
\begin{equation}
\label{top-degree}
d_0:=n(q^m-1)
\end{equation}
is $1$-dimensional, spanned over $\FF_q$ by the image of the monomial 
$$
\xx^{a_0}:=(x_1 \cdots x_n)^{q^m-1}.
$$
Furthermore, the $\FF_q$-bilinear pairing
\begin{equation}
\label{Gorenstein-pairing}
\begin{aligned}
Q_i \otimes Q_j &\longrightarrow Q_{d_0} =\FF_q \cdot \xx^{a_0}  \cong \FF_q \\
(f_1, f_2) &\longmapsto  f_1 \cdot f_2 
\end{aligned}
\end{equation}
is {\it non-degenerate} (or {\it perfect}):  for monomials $\xx^a, \xx^b$ in \eqref{monomial-basis-for-quotient}
of degrees $i,j$ with $i+j=d_0$,  one has
$$
(\xx^a, \xx^b) = 
\begin{cases} 
\xx^{a_0} & \text{ if } a + b = a_0, \\
0 & \text{ otherwise.}
\end{cases}
$$

\begin{proposition}
\label{socle-is-invariant-prop}
The monomial $\xx^{a_0}=(x_1 \cdots x_n)^{q^m-1}$ has $G$-invariant
image in the quotient $Q=S/\mm^{[q^m]}$, and hence its span $Q_{d_0}$ carries
the trivial $G$-representation.
\end{proposition}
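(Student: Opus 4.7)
The plan is to reduce the claim to checking invariance of $\overline{\xx^{a_0}}$ under a generating set of $G = GL_n(\FF_q)$. Specifically, $G$ is generated by its diagonal matrices together with the elementary transvections $t_{ij}(c) \colon x_i \mapsto x_i + c x_j$ (fixing all other variables), for $i \neq j$ and $c \in \FF_q$; this is the standard Gauss--Jordan factorization of invertible matrices over a field.

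For a diagonal matrix $g = \mathrm{diag}(c_1,\ldots,c_n)$, the action scales each $x_i$ by $c_i^{\pm 1}$, so $g \cdot \xx^{a_0}$ differs from $\xx^{a_0}$ by a factor of $(c_1 \cdots c_n)^{\pm(q^m - 1)} = (\det g)^{\pm(q^m - 1)}$. Since $\det g \in \FF_q^{\times}$ has order dividing $q - 1$, and $q - 1$ divides $q^m - 1$, this factor is $1$, so $\xx^{a_0}$ is actually fixed even in $S$. For the transvection $t = t_{12}(c)$ (the other cases being identical after relabeling), binomial expansion yields
\begin{align*}
t \cdot \xx^{a_0} &= (x_1 + c x_2)^{q^m - 1} \prod_{j=2}^{n} x_j^{q^m - 1} \\
&= \sum_{k=0}^{q^m - 1} \binom{q^m - 1}{k} c^k \, x_1^{q^m - 1 - k} \, x_2^{q^m - 1 + k} \prod_{j=3}^{n} x_j^{q^m - 1}.
\end{align*}
For each $k \geq 1$ the exponent of $x_2$ is at least $q^m$, so that summand lies in $\mm^{[q^m]}$; the $k = 0$ summand is $\xx^{a_0}$ itself. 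Hence $t \cdot \overline{\xx^{a_0}} = \overline{\xx^{a_0}}$ in $Q$.

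Combining these computations shows that $\overline{\xx^{a_0}}$ is fixed by all of $G$, and the second assertion --- that $Q_{d_0}$ carries the trivial $G$-representation --- follows immediately from one-dimensionality. I do not foresee a genuine obstacle: the transvection case rests only on the Frobenius-power form of the relations, and the diagonal case on the divisibility $(q - 1) \mid (q^m - 1)$. As an alternative, one could observe that $G$ acts on the one-dimensional space $Q_{d_0}$ by some character $\chi \colon G \to \FF_q^{\times}$; any such character factors through $\det$, and the diagonal computation pins down its exponent to be divisible by $q - 1$, forcing $\chi$ to be trivial. I prefer the direct route above for being more self-contained.
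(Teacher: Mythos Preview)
Your proof is correct and matches the paper's own argument almost exactly. The paper in fact gives two proofs: its Proof~2 checks invariance on a generating set (permutations, diagonal scalings, and a single transvection) just as you do, and its Proof~1 is precisely the character-theoretic alternative you sketch at the end.
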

\begin{proof}[Proof 1.]
As $G$ acts on $S$ and on $Q$ preserving degree, it induces a $1$-dimensional
$G$-representation on $Q_{d_0}$.  Thus $Q_{d_0}$  must carry one of the 
linear characters of $G=GL_n(\FF_q)$,
that is, $\det^j$ for some $j$ in $\{0,1,\ldots,q-2\}$.
We claim that in fact $j=0$, since the element $g$ in $G$ that scales
the variable $x_1$ by  a primitive $(q-1)$st root of unity  
$\gamma$ in $\FF_q^\times$ and fixes all other variables $x_i$ with $i \geq 2$
will have $\det(g)=\gamma$ and has $g(\xx^{a_0})=\gamma^{q^m-1}\xx^{a_0}=\xx^{a_0}$.
\end{proof}
\begin{proof}[Proof 2.]
Note $G=GL_n(\FF_q)$ is generated by 
all {\it permutations} of coordinates, all {\it scalings} 
of coordinates,
and any {\it transvection}, such as the element $u$ sending 
$x_1 \mapsto x_1+ x_2$ and fixing $x_i$ for $i \neq 1$.
So it suffices to check that the image of 
$\xx^{a_0}=(x_1 \cdots x_n)^{q^m-1}$ in $Q$ is
invariant under permutations (obvious), 
invariant under scalings of a coordinate (easily checked as in Proof 1),
and invariant under the transvection $u$:
$$
u(\xx^{a_0})=(x_1+x_2)^{q^m-1} (x_2 \cdots x_n)^{q^m-1}
=(x_1^{q^m-1} + x_2 h) (x_2 \cdots x_n)^{q^m-1}
\equiv \xx^{a_0} \bmod \mm^{[q^m]},
$$
where $h$ is a polynomial whose exact form is unimportant.
\end{proof}

Note that Proposition~\ref{socle-is-invariant-prop} is an expected consequence
of Conjecture~\ref{mod-powers-conjecture}, due to the
following observation.

\begin{proposition}
For any composition $\alpha$ of $n$, the polynomial $C_{\alpha,m}(t)$ is monic 
of degree $d_0=n(q^m-1)$.
\end{proposition}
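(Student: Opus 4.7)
The plan is to compute the degree of each summand in the expression \eqref{parabolic-Catalan-with-m-definition} for $C_{\alpha,m}(t)$, and show that exactly one summand—indexed by $\beta = (0,0,\ldots,0)$—achieves the maximum degree $d_0 = n(q^m - 1)$, contributing the monic leading term $t^{d_0}$.

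First I would compute the degree of $\qbin{m}{\beta,m-|\beta|}{q,t}$ from the product definition \eqref{(q,t)-multinomial-definition}. Writing this as a ratio of products, the degree of the numerator $\prod_{j=0}^{m-1}(1-t^{q^m-q^j})$ is $mq^m - \tfrac{q^m-1}{q-1}$, and a direct expansion of the denominator, using $\sum_{j=0}^{\beta_i-1}q^{B_{i-1}+j} = \tfrac{q^{B_i}-q^{B_{i-1}}}{q-1}$ and telescoping, yields denominator degree $\sum_i \beta_i q^{B_i} + (m-|\beta|)q^m - \tfrac{q^m-1}{q-1}$. Subtracting gives
\[
\deg \qbin{m}{\beta, m-|\beta|}{q,t} \;=\; \sum_{i=1}^\ell \beta_i(q^m - q^{B_i}).
\]
Combining with the exponent shift $e(m,\alpha,\beta) = \sum_i(\alpha_i-\beta_i)(q^m-q^{B_i})$, the total degree of the $\beta$-summand is
\[
e(m,\alpha,\beta) + \deg\qbin{m}{\beta,m-|\beta|}{q,t} \;=\; \sum_{i=1}^\ell \alpha_i(q^m - q^{B_i}),
\]
in which $\beta$ enters only through the partial sums $B_i$.

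Next I would maximize this expression over admissible $\beta$. Since $\alpha_i > 0$ for all $i$ and $q > 1$, the quantity $\sum_i \alpha_i q^{B_i}$ is minimized if and only if every $B_i = 0$, that is, $\beta = (0,\ldots,0)$. At this $\beta$ the summand degree is $\sum_i \alpha_i(q^m-1) = n(q^m-1) = d_0$; for any other $\beta$, letting $i_0$ be the first index with $\beta_{i_0} > 0$ forces $B_{i_0} \geq 1$, and then $\alpha_{i_0}(q^m - q^{B_{i_0}}) < \alpha_{i_0}(q^m - 1)$ while the other terms do not increase, so the total degree strictly drops below $d_0$.

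Finally I would confirm the leading coefficient is $1$. For $\beta = 0$, the composition is $(0,\ldots,0,m)$ with all partial sums except the last equal to $0$, so the denominator in \eqref{(q,t)-multinomial-definition} reduces to $\prod_{j=0}^{m-1}(1-t^{q^m-q^j})$, which cancels the numerator: thus $\qbin{m}{0,\ldots,0,m}{q,t} = 1$, and the corresponding summand is simply $t^{d_0}$. Summing up, $C_{\alpha,m}(t)$ is monic of degree $d_0$. The only mildly delicate step is the degree computation of the $(q,t)$-multinomial, and it is routine once the telescoping sum of $q$-powers is recognized; the rest is a strict-monotonicity argument that is straightforward given $\alpha_i > 0$.
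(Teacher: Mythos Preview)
Your proof is correct and follows essentially the same approach as the paper: both compute the degree of the $(q,t)$-multinomial, add $e(m,\alpha,\beta)$ to obtain $nq^m - \sum_i \alpha_i q^{B_i}$ for the $\beta$-summand, and then observe that this is uniquely maximized at $\beta = 0$, where the summand is exactly $t^{d_0}$. The only cosmetic difference is that the paper cancels the last block of the multinomial before computing degrees, while you carry the full numerator and denominator and let the $\tfrac{q^m-1}{q-1}$ terms cancel via telescoping; the outcome and logic are identical.
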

\begin{proof}
Letting $\deg_t(-)$ denote degree in $t$,
the product formula 
\eqref{(q,t)-multinomial-definition} for the $(q,t)$-multinomial shows
that 
\begin{equation}
\label{first-degree-contribution}
\begin{aligned}
\deg_t \qbin{m}{\beta,m-|\beta|}{q,t}
&= \sum_{j=0}^{|\beta|} (q^m-q^j) -
     \sum_{i=1}^\ell \sum_{j=0}^{\beta_i-1} (q^{B_i}-q^{B_{i-1}+j}) \\
&= |\beta| q^m - \sum_{j=0}^{|\beta|} q^j
- \sum_{i=1}^\ell \beta_i q^{B_i}
+ \sum_{i=1}^\ell \sum_{j=0}^{\beta_i-1} q^{B_{i-1}+j} \\ 
&= |\beta|q^m-\sum_{i=1}^{\ell} \beta_i q^{B_i},
\end{aligned}
\end{equation}
while the exponent on the monomial $t^{e(m,\alpha,\beta)}$ can be rewritten
\begin{equation}
\label{second-degree-contribution}
e(m,\alpha,\beta)
=\sum_{i=1}^\ell (\alpha_i-\beta_i)(q^m-q^{B_i})
=n q^m - |\beta| q^m -\sum_{i=1}^{\ell} \alpha_i q^{B_i}
+\sum_{i=1}^{\ell} \beta_i q^{B_i}.
\end{equation}
Therefore the summand of $C_{\alpha,m}(t)$ indexed by $\beta$ has degree equal to the 
sum of \eqref{first-degree-contribution} and \eqref{second-degree-contribution}, namely 
$$
n q^m - \sum_{i=1}^{\ell} \alpha_i q^{B_i} \geq 
n q^m - \sum_{i=1}^\ell \alpha_i = n q^m-n = n(q^m-1) = d_0.
$$
Equality occurs in this inequality
if and only $B_i=0$ for all $i$, 
so the $t$-degree is maximized uniquely by the $\beta=0$ summand, which is
the single monomial $t^{n(q^m-1)}=t^{d_0}$.
\end{proof}

Proposition~\ref{socle-is-invariant-prop} shows that the
nondegenerate pairing \eqref{Gorenstein-pairing} is {\it $G$-invariant}: for any $g$ in $G$, one has
$$
(g(f_1), g(f_2)) = g(f_1) g(f_2) = g(f_1 f_2) = f_1 f_2 = (f_1,f_2).
$$
Thus one has an isomorphism of $G$-representations
$
Q_i \cong Q_j^* \text{ in complementary degrees }i+j=d_0.
$
Here the notation $U^*$ denotes the representation {\it contragredient} or {\it dual} to the $G$-representation $U$ on its dual space, in which for any
functional $\varphi$ in $U^*$, group element $g$ in $G$ 
and vector $u$ in $U$, one has
$g(\varphi)(u)=\varphi(g^{-1}(u))$.
Cofixed spaces are dual to fixed spaces, as the following well-known proposition shows.

\begin{proposition}
\label{fixed-cofixed-duality}
For any group $G$ and any $G$-representation $U$ over a field $k$,
one has a $k$-vector space isomorphism
$
(U_G)^* \cong (U^*)^G,
$
in which $U_G$ is the cofixed space for $G$ acting on $U$, and $(U^*)^G$ is the
subspace of $G$-fixed functionals in $U^*$.
\end{proposition}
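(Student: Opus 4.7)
The plan is to realize both sides as the same subspace of $U^*$ via the canonical annihilator construction. Let $N \subseteq U$ be the $k$-subspace spanned by all elements of the form $g(u) - u$ with $g \in G$ and $u \in U$, so that $U_G = U/N$ by definition. Dualizing the short exact sequence $0 \to N \to U \to U/N \to 0$ gives a canonical $k$-linear isomorphism $(U_G)^* = (U/N)^* \cong N^\perp$, where $N^\perp := \{\varphi \in U^* : \varphi|_N = 0\}$ is the annihilator of $N$. This reduces the problem to identifying $N^\perp$ with $(U^*)^G$.

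For this identification, I would simply unwind the definition of the contragredient action. A functional $\varphi \in U^*$ lies in $N^\perp$ if and only if $\varphi(g(u) - u) = 0$ for all $g \in G$ and $u \in U$, equivalently $\varphi(g(u)) = \varphi(u)$ for all such $g, u$. Since the contragredient action is defined by $(h(\varphi))(u) = \varphi(h^{-1}(u))$, substituting $g = h^{-1}$ shows this is equivalent to $(h(\varphi))(u) = \varphi(u)$ for all $h \in G$ and $u \in U$, i.e.\ $h(\varphi) = \varphi$ for all $h \in G$. So $N^\perp = (U^*)^G$, and combining with the previous paragraph yields the desired isomorphism $(U_G)^* \cong (U^*)^G$.

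There is essentially no hard step: the statement is a definition-chase built on the standard annihilator/quotient duality, and the whole argument works in any characteristic and for any group $G$, finite or not, with no finite-dimensionality hypothesis on $U$ (the dual of a quotient is always the annihilator of the kernel). The only minor point to be careful about is the inversion $g \leftrightarrow g^{-1}$ when passing between the $G$-action on $U$ and the contragredient action on $U^*$; this is what makes the equivalence go through cleanly, and it is worth spelling out explicitly in the write-up so that no sign or direction is lost.
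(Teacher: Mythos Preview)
Your proof is correct and follows essentially the same approach as the paper: identify $(U_G)^*$ with the annihilator $N^\perp$ of $N=\spanof_k\{g(u)-u\}$ via the universal property of quotients, then check that $N^\perp=(U^*)^G$ by unwinding the contragredient action. The paper's version is slightly terser (it omits the explicit $g\leftrightarrow g^{-1}$ bookkeeping you spell out), but the argument is the same.
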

\begin{proof}
Recall that $U_G:=U/N$ where $N$ is the $k$-span of $\{g(u)-u\}_{u \in U, g \in G}$.
Thus, by the universal property of quotients, $(U_G)^*$ 
is  the subspace 
of functionals $\varphi$ in $U^*$ vanishing on restriction to $N$.
This is equivalent to $0=\varphi(g(u)-u)=\varphi(g(u))-\varphi(u)$ 
for all $u$ in $U$ and $g$ in $G$, 
that is, to $\varphi$ lying in $(U^*)^G$.
\end{proof}

\begin{corollary}
\label{complementary-degree-corollary}
For complementary degrees $i+j=d_0$ in $Q=S/\mm^{[q^m]}$,
one has an $\FF_q$-vector space duality of fixed and cofixed spaces
$(Q_i^{P_\alpha})^* \cong (Q_j)_{P_\alpha}$, and hence equality of their dimensions. 
Therefore one has
\begin{align}
\Hilb(Q_{P_\alpha},t)&=t^{d_0} \Hilb(Q^{P_\alpha},t^{-1}),
\label{reciprocal-matches-quotient-cofixed-space}\\
\Hilb(S_{P_\alpha},t) &\equiv t^{d_0} \Hilb(Q^{P_\alpha},t^{-1}) \quad \bmod{(t^{q^m})},
\label{reciprocal-matches-polynomial-cofixed-space}
\qquad \text{ and }\\
\Hilb(S_{P_\alpha},t) &=\lim_{m \rightarrow \infty} t^{d_0} \Hilb(Q^{P_\alpha},t^{-1})
\label{limit-of-reciprocal-matches-polynomial-cofixed-space}.
\end{align}
\end{corollary}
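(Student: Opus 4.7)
My plan is to deduce the duality statement first, and then let the three Hilbert series equations fall out as formal consequences, using the low-degree isomorphism $S_d \cong Q_d$ from \eqref{S-and-Q-isomorphic-in-low-degrees} to transfer from $Q$ to $S$.

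For the duality, the starting point is the $G$-invariant non-degenerate pairing \eqref{Gorenstein-pairing}, which is $G$-invariant precisely because Proposition~\ref{socle-is-invariant-prop} identifies the socle $Q_{d_0}$ with the trivial representation.  Interpreting this pairing as a map $Q_i \to Q_j^*$ sending $f_1 \mapsto (f_1, -)$, one checks that the $G$-invariance of the pairing is exactly the assertion that this map is $G$-equivariant (with $G$ acting on $Q_j^*$ contragrediently).  Non-degeneracy makes it an isomorphism, and since it is an isomorphism of $G$-representations, it is automatically one of $P_\alpha$-representations as well, giving $Q_i \cong Q_j^*$ for $i+j=d_0$.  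Taking $P_\alpha$-fixed spaces and invoking Proposition~\ref{fixed-cofixed-duality}, one obtains $Q_i^{P_\alpha} \cong (Q_j^*)^{P_\alpha} \cong ((Q_j)_{P_\alpha})^*$.  Dualizing once more (everything is finite-dimensional over $\FF_q$) yields $(Q_i^{P_\alpha})^* \cong (Q_j)_{P_\alpha}$, and in particular $\dim_{\FF_q} Q_i^{P_\alpha} = \dim_{\FF_q} (Q_j)_{P_\alpha}$ whenever $i+j=d_0$.

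The three Hilbert series identities then follow formally.  For \eqref{reciprocal-matches-quotient-cofixed-space}, the dimension equality above is precisely the palindromic relation $[t^j]\Hilb(Q_{P_\alpha},t) = [t^{d_0-j}]\Hilb(Q^{P_\alpha},t)$, which is the coefficient-wise form of $\Hilb(Q_{P_\alpha},t) = t^{d_0} \Hilb(Q^{P_\alpha},t^{-1})$.  For \eqref{reciprocal-matches-polynomial-cofixed-space}, one observes that the $G$-equivariant quotient $S \twoheadrightarrow Q$ restricts to a $G$-equivariant isomorphism $S_d \cong Q_d$ for $d \leq q^m-1$; since the cofixed-space construction is functorial, this gives $(S_d)_{P_\alpha} \cong (Q_d)_{P_\alpha}$ in these degrees, hence $\Hilb(S_{P_\alpha},t) \equiv \Hilb(Q_{P_\alpha},t) \pmod{t^{q^m}}$, and combining with \eqref{reciprocal-matches-quotient-cofixed-space} gives the claimed congruence.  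Finally, \eqref{limit-of-reciprocal-matches-polynomial-cofixed-space} is just the statement that, since $q^m \to \infty$, the congruence \eqref{reciprocal-matches-polynomial-cofixed-space} forces convergence coefficient-by-coefficient in $\ZZ[[t]]$ to $\Hilb(S_{P_\alpha},t)$.

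The only place that is not pure diagram-chasing is verifying that the pairing map really is $G$-equivariant with respect to the \emph{contragredient} action on $Q_j^*$; this is a short unwinding of definitions, but it is the one spot where one must be careful about the distinction between the pairing being $G$-invariant and the induced map $Q_i \to Q_j^*$ being equivariant.  Beyond that, the argument is essentially automatic once Proposition~\ref{socle-is-invariant-prop} and Proposition~\ref{fixed-cofixed-duality} are in hand, so I do not anticipate a serious obstacle.
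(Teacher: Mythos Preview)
Your proposal is correct and follows essentially the same approach as the paper's proof: both derive the duality $(Q_i^{P_\alpha})^* \cong (Q_j)_{P_\alpha}$ by combining the $G$-equivariant isomorphism $Q_i \cong Q_j^*$ (coming from the $G$-invariant Gorenstein pairing via Proposition~\ref{socle-is-invariant-prop}) with Proposition~\ref{fixed-cofixed-duality}, and then read off the three Hilbert series identities formally using the low-degree isomorphism \eqref{S-and-Q-isomorphic-in-low-degrees}. Your write-up simply unpacks in more detail what the paper compresses into ``immediate from the discussion surrounding Proposition~\ref{fixed-cofixed-duality}.''
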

\begin{proof}
Equation \eqref{reciprocal-matches-quotient-cofixed-space} is 
immediate from the discussion surrounding Proposition~\ref{fixed-cofixed-duality}.  
Then \eqref{reciprocal-matches-quotient-cofixed-space}
implies \eqref{reciprocal-matches-polynomial-cofixed-space},
since the isomorphism \eqref{S-and-Q-isomorphic-in-low-degrees}
shows $(S_{P_\alpha})_d \cong (Q_{P_\alpha})_d$ for $0 \leq d \leq q^m-1$.
Lastly \eqref{reciprocal-matches-polynomial-cofixed-space}
implies \eqref{limit-of-reciprocal-matches-polynomial-cofixed-space}.
\end{proof}

\begin{corollary}
\label{first-conj-implies-second-conj-corollary}
Parabolic Conjecture~\ref{mod-powers-conjecture} implies
Parabolic Conjecture~\ref{cofixed-conjecture}.
\end{corollary}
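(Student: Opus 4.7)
The plan is to start from equation~\eqref{limit-of-reciprocal-matches-polynomial-cofixed-space} of Corollary~\ref{complementary-degree-corollary}, which already gives $\Hilb(S_{P_\alpha},t)=\lim_{m\to\infty}t^{d_0}\Hilb(Q^{P_\alpha},t^{-1})$ in $\ZZ[[t]]$, substitute $\Hilb(Q^{P_\alpha},t)=C_{\alpha,m}(t)$ supplied by Parabolic Conjecture~\ref{mod-powers-conjecture}, and then compute $\lim_{m\to\infty}t^{d_0}C_{\alpha,m}(t^{-1})$ summand by summand, matching the result against the right-hand side of \eqref{parabolic-cofixed-hilbert-series}.

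The key observation for the summand computation is that, after cancelling the block of denominator factors indexed by the ``extra'' part $m-|\beta|$ of the composition $(\beta,m-|\beta|)$ against the tail of the numerator, the $(q,t)$-multinomial takes the reduced form
\[
\qbin{m}{\beta,m-|\beta|}{q,t}
=\frac{\prod_{j=0}^{|\beta|-1}(1-t^{q^m-q^j})}
      {\prod_{i=1}^\ell\prod_{j=0}^{\beta_i-1}(1-t^{q^{B_i}-q^{B_{i-1}+j}})},
\]
whose numerator and denominator have the same number $|\beta|$ of factors. Applying the elementary identity $1-t^{-a}=-t^{-a}(1-t^a)$ to every factor lets the signs $(-1)^{|\beta|}$ cancel and yields
\[
\qbin{m}{\beta,m-|\beta|}{q,t^{-1}}
=t^{-(|\beta|q^m-\sum_i\beta_iq^{B_i})}\qbin{m}{\beta,m-|\beta|}{q,t},
\]
the exponent being exactly $-\deg_t\qbin{m}{\beta,m-|\beta|}{q,t}$ by \eqref{first-degree-contribution}. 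Combining this with the formula \eqref{second-degree-contribution} for $e(m,\alpha,\beta)$, the total exponent on $t$ in the $\beta$-summand of $t^{d_0}C_{\alpha,m}(t^{-1})$ telescopes to $\sum_{i=1}^\ell\alpha_i(q^{B_i}-1)$, independently of $m$, and matches the exponent on $t$ in the right-hand side of \eqref{parabolic-cofixed-hilbert-series}.

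It remains to take $m\to\infty$. For each fixed $\beta\leq\alpha$ the numerator factors $1-t^{q^m-q^j}$ tend to $1$ in $\ZZ[[t]]$ as $m\to\infty$, so
\[
\lim_{m\to\infty}\qbin{m}{\beta,m-|\beta|}{q,t}
=\prod_{i=1}^\ell\prod_{j=0}^{\beta_i-1}\frac{1}{1-t^{q^{B_i}-q^{B_{i-1}+j}}},
\]
which is precisely the product appearing in the $\beta$-summand of \eqref{parabolic-cofixed-hilbert-series}. For $m\geq n$ the indexing set $\{\beta\leq\alpha:|\beta|\leq m\}$ stabilizes at $\{\beta:\beta\leq\alpha\}$, so termwise summation over this finite index set recovers the right-hand side of Parabolic Conjecture~\ref{cofixed-conjecture} exactly. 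The main obstacle is purely bookkeeping of exponents, made routine by the matching numerator/denominator factor counts that govern the $t\mapsto t^{-1}$ reciprocity; exchanging the limit with the finite sum is automatic once the index set has stabilized.
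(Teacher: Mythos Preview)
Your argument is correct and follows essentially the same route as the paper's own proof: both start from \eqref{limit-of-reciprocal-matches-polynomial-cofixed-space}, substitute $C_{\alpha,m}(t)$, and compute $t^{d_0}C_{\alpha,m}(t^{-1})$ summand by summand to obtain exactly the explicit expression the paper records as \eqref{reciprocal-of-C-explicitly}, with the exponent $\sum_i\alpha_i(q^{B_i}-1)$ arising from the same telescoping using \eqref{first-degree-contribution} and \eqref{second-degree-contribution}. The only cosmetic difference is in the final step: the paper disposes of the numerator factors $1-t^{q^m-q^j}$ by proving the congruence \eqref{reciprocal-congruence} mod $t^{q^m}$ via the inequality $(q^m-q^j)+\sum_i\alpha_i(q^{B_i}-1)\geq q^m$, whereas you simply observe that these factors tend to $1$ in $\ZZ[[t]]$ as $m\to\infty$; your version is marginally more direct here, while the paper's congruence formulation has the side benefit of being reusable in the proof of Corollary~\ref{n=2-equivalence-corollary}.
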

\begin{proof}
Assuming Parabolic Conjecture~\ref{mod-powers-conjecture}, Equation
\eqref{limit-of-reciprocal-matches-polynomial-cofixed-space} implies
$$
\Hilb(S_{P_\alpha},t) =\lim_{m \rightarrow \infty} t^{d_0} \Hilb(Q^{P_\alpha},t^{-1}) 
=\lim_{m \rightarrow \infty} t^{d_0} C_{\alpha,m}(t^{-1}).
$$
Hence Parabolic Conjecture~\ref{cofixed-conjecture} follows once one checks the following
assertion:
\begin{equation}
\label{reciprocal-congruence}
t^{d_0} C_{\alpha,m}(t^{-1}) \equiv
  \sum_{\substack{\beta:
\beta \leq \alpha}}
  t^{\sum_{i=1}^\ell \alpha_i(q^{B_i}-1)}
 \prod_{i=1}^\ell
 \prod_{j=0}^{\beta_i-1}
 \frac{1}
 {1 - t^{q^{B_i}-q^{B_{i-1}+j}}}  \quad \bmod{(t^{q^m})}.
\end{equation}
To prove \eqref{reciprocal-congruence}, one first uses
the definition \eqref{parabolic-Catalan-with-m-definition} of $C_{\alpha,m}(t)$ to do a straightforward calculation showing
\begin{equation}
\label{reciprocal-of-C-explicitly}
t^{d_0} C_{\alpha,m}(t^{-1})=
 \sum_{\substack{\beta:
\beta \leq \alpha\\|\beta|\leq m}}
  t^{\sum_{i=1}^\ell \alpha_i (q^{B_i}-1) }
 \frac{ \prod_{j=0}^{|\beta|-1} (1-t^{q^m-q^j}) }
 { \prod_{i=1}^\ell \prod_{j=0}^{\beta_i-1} (1-t^{q^{B_i}-q^{B_{i-1}+j}}) }.
\end{equation}
Since $q^{B_i}-1 \geq q^{B_i - 1}$, one has
\[
\sum_{i=1}^\ell \alpha_i (q^{B_i}-1)
\geq \sum_{i=1}^\ell \alpha_i q^{B_i - 1}
\geq \alpha_\ell q^{B_\ell - 1}
\geq q^{|\beta| - 1}.
\]
This implies that for each $j=0,1,\ldots,|\beta|-1$ one has 
$(q^m-q^j)+\sum_{i=1}^\ell \alpha_i (q^{B_i}-1) \geq q^m$.
Therefore the right side in \eqref{reciprocal-of-C-explicitly}
is equivalent mod $(t^{q^m})$
to the right side in \eqref{reciprocal-congruence}.
\end{proof}

\begin{corollary}
\label{n=2-equivalence-corollary}
In the bivariate case $n=2$, 
Parabolic Conjectures~\ref{mod-powers-conjecture} and \ref{cofixed-conjecture}
are equivalent.
\end{corollary}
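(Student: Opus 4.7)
The forward implication is Corollary~\ref{first-conj-implies-second-conj-corollary}, so only the reverse direction needs to be addressed. My plan is to exploit the fact that, for $n=2$, the polynomial $\Hilb(Q^{P_\alpha},t)$, which has degree $d_0=2(q^m-1)$, is completely determined by its bottom $q^m$ coefficients together with its top $q^m$ coefficients, since these two ranges overlap in exactly one degree.

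The bottom-half coefficients are pinned down unconditionally by the ingredients of Section~\ref{consistency-with-Dickson-section}: combining the isomorphism \eqref{S-and-Q-isomorphic-in-low-degrees} with the Kuhn--Mitchell formula \eqref{parabolic-invariants-hilbert-series} and Proposition~\ref{equality-up-to-q^m-prop} yields
\[
\Hilb(Q^{P_\alpha},t) \equiv C_{\alpha,m}(t) \pmod{t^{q^m}},
\]
which fixes the coefficients of $\Hilb(Q^{P_\alpha},t)$ in degrees $0,1,\ldots,q^m-1$ independently of either conjecture.

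For the top-half coefficients, I will invoke the Gorenstein duality of Corollary~\ref{complementary-degree-corollary}, which provides
\[
\Hilb(S_{P_\alpha},t) \equiv t^{d_0}\Hilb(Q^{P_\alpha},t^{-1}) \pmod{t^{q^m}}.
\]
Substituting the assumed formula \eqref{parabolic-cofixed-hilbert-series} for $\Hilb(S_{P_\alpha},t)$ on the left-hand side, and then applying the congruence between \eqref{reciprocal-of-C-explicitly} and \eqref{reciprocal-congruence} established in the proof of Corollary~\ref{first-conj-implies-second-conj-corollary} in reverse, I identify that substituted left-hand side with $t^{d_0}C_{\alpha,m}(t^{-1})$ modulo $t^{q^m}$. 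Reading off coefficients after reversal, this determines the coefficients of $\Hilb(Q^{P_\alpha},t)$ in degrees $d_0-q^m+1,\ldots,d_0$, forcing them to agree with those of $C_{\alpha,m}(t)$.

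The final step is the combinatorial observation that when $n=2$ one has $d_0-q^m+1=q^m-1$, so the ranges $\{0,\ldots,q^m-1\}$ and $\{q^m-1,\ldots,2q^m-2\}$ cover every degree of $\Hilb(Q^{P_\alpha},t)$, overlapping consistently only in degree $q^m-1$. Since both $\Hilb(Q^{P_\alpha},t)$ and $C_{\alpha,m}(t)$ are polynomials of degree at most $d_0$ (using the monicity result established just after Proposition~\ref{socle-is-invariant-prop}), the two halves jointly force $\Hilb(Q^{P_\alpha},t)=C_{\alpha,m}(t)$, yielding Parabolic Conjecture~\ref{mod-powers-conjecture}. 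The main obstacle to pushing this method to $n\geq 3$ is that a gap opens in the intermediate degrees $q^m,\ldots,(n-1)q^m-n$, which neither the low-degree congruence nor the reciprocal congruence reaches; the argument is tailored precisely to the coincidence $d_0-q^m+1=q^m-1$ available only when $n=2$.
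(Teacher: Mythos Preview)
The proposal is correct and follows essentially the same argument as the paper: pin down the coefficients of $\Hilb(Q^{P_\alpha},t)$ in degrees $0,\ldots,q^m-1$ via Proposition~\ref{equality-up-to-q^m-prop} and \eqref{invariants-agree-in-low-degree}, pin down those in degrees $d_0-(q^m-1),\ldots,d_0$ via \eqref{reciprocal-matches-polynomial-cofixed-space}, the assumed Parabolic Conjecture~\ref{cofixed-conjecture}, and \eqref{reciprocal-congruence}, and then observe that for $n=2$ these two ranges together exhaust $\{0,\ldots,d_0\}$. Your added remarks about the overlap at degree $q^m-1$ and the gap for $n\geq 3$ are correct elaborations but not part of the paper's terser presentation.
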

\begin{proof}
Corollary~\ref{first-conj-implies-second-conj-corollary} showed
that Parabolic Conjecture~\ref{mod-powers-conjecture} implies
Parabolic Conjecture~\ref{cofixed-conjecture} 
for any $n$.  The reverse implication when $n=2$ arises when
two coefficient comparisons valid for general $n$ ``meet in the middle", as we now explain.  Again, in this proof, all symbols ``$\equiv$'' mean
congruence mod $(t^{q^m})$.  On one hand, one has
$$
\Hilb(Q^{P_\alpha},t) 
\equiv \Hilb(S^{P_\alpha},t) 
= \prod_{i=1}^\ell \prod_{j=0}^{\alpha_i-1}
\frac{1}{1 - t^{q^{A_i}-q^{A_{i-1}+j}}}
\equiv C_{\alpha,m}(t)
$$
where the left congruence is 
\eqref{invariants-agree-in-low-degree},
the middle equality is 
\eqref{parabolic-invariants-hilbert-series},
and the right congruence is 
Proposition~\ref{equality-up-to-q^m-prop}.  Therefore
$\Hilb(Q^{P_\alpha},t)$ and $C_{\alpha,m}(t)$ have the same coefficients on $1,t,t^2,\ldots,t^{q^m-1}$.
On the other hand, one has 
$$
t^{d_0} \Hilb(Q^{P_\alpha},t^{-1}) 
\equiv \Hilb(S_{P_\alpha},t) 
= \sum_{\substack{\beta:
\beta \leq \alpha}}
  t^{\sum_{i=1}^\ell \alpha_i(q^{B_i}-1)}
 \prod_{i=1}^\ell
 \prod_{j=0}^{\beta_i-1}
 \frac{1}
 {1 - t^{q^{B_i}-q^j}} 
\equiv t^{d_0} C_{\alpha,m}(t^{-1})
$$
where the left congruence is \eqref{reciprocal-matches-polynomial-cofixed-space},
the middle equality is Parabolic Conjecture~\ref{cofixed-conjecture}, 
and the right congruence is 
Corollary~\ref{reciprocal-congruence}. 
Therefore $\Hilb(Q^{P_\alpha},t)$ and $C_{\alpha,m}(t)$ also have 
the same coefficients on 
$t^{d_0},t^{d_0-1},\ldots,t^{d_0-(q^m-1)}$.  
Since
$d_0=n(q^m-1)$, when $n=2$, this means that $\Hilb(Q^{P_\alpha},t), C_{\alpha,m}(t)$ agree on {\it all} coefficients.
\end{proof}

\begin{example}
We illustrate some of the assertions of Corollary~\ref{complementary-degree-corollary}
for $n=m=2$ and $q=3$, where
$$
S=\FF_3[x,y], \qquad 
Q=S/(x^9,y^9), \qquad
d_0=2(3^2-1)=16.
$$
Our results in  Appendix~\ref{n=2-section} 
below show that Conjectures~\ref{mod-powers-conjecture} 
and \ref{cofixed-conjecture} hold for $n=2$.  Therefore for 
the group $G=GL_2(\FF_3) \,\, (=P_{(2)})$, one can compute that
$$
\begin{array}{lll}
\Hilb(S^G,t)=\dfrac{1}{(1-t^6)(1-t^8)} &= 1+ t^6 + t^8 + O(t^9) &\\

\Hilb(Q^G,t)=C_{2,2}(t) &= 1 + t^6 + t^8 + t^{10} + t^{12} + t^{16}
                       & \equiv \Hilb(S^G,t) \bmod{t^9}, 
\end{array}
$$
and similarly
$$
\begin{array}{lll}
\Hilb(S_G,t)=1 + \dfrac{t^{4}}{1-t^2} + \dfrac{t^{16}}{(1-t^6)(1-t^8)} &= 1+ t^4+ t^6 + t^8 + O(t^9) & \\
t^{16}\Hilb(Q^G,t^{-1})&= 1 + t^4 + t^6 + t^8 + t^{10} + t^{16}
                       &\equiv \Hilb(S_G,t) \bmod{t^9}.
\end{array}
$$
Note that $\Hilb(Q^G,t)$ is {\it not} a reciprocal polynomial in $t$, that is, its coefficient sequence is not symmetric.  
In particular, although the ring $Q$ is Gorenstein, its $G$-fixed subalgebra $Q^G$ is {\it not}.
\end{example}

\section{The case where $m$ is at most $1$}
\label{small-m-section}

When $m=0$, Parabolic Conjecture~\ref{mod-powers-conjecture}
says little: $Q=S/\mm^{[q^0]}=S/\mm=\FF_q$ has no variables, 
so $Q^{P_\alpha}=Q=\FF_q$ and $\Hilb(Q^{P_\alpha},t)=1$.  Meanwhile,
$C_{\alpha,0}(t)=1$ since \eqref{parabolic-Catalan-with-m-definition}
has only the $\beta=0$ summand.  

 The $m=1$ case is less trivial.

\begin{proposition}
\label{small-m-proposition}
Parabolic Conjecture~\ref{mod-powers-conjecture} holds for $m=1$.
\end{proposition}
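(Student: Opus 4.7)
My plan is to prove directly that $Q^{P_\alpha}$ is spanned by the $\ell+1$ elements $f_j := (x_1 \cdots x_{A_j})^{q-1}$ for $j=0,1,\ldots,\ell$. These have distinct degrees $A_j(q-1)$, and a short unpacking of the definition \eqref{parabolic-Catalan-with-m-definition} (the $(q,t)$-multinomials $\qbin{1}{\beta,1-|\beta|}{q,t}$ all collapse to $1$, while the exponents $e(1,\alpha,\beta)$ become $A_{k-1}(q-1)$ when $\beta=\delta_k$ and $n(q-1)$ when $\beta=0$) shows $C_{\alpha,1}(t)=\sum_{j=0}^\ell t^{A_j(q-1)}$, so identifying the spanning set with these degrees will finish the proof.

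For the lower bound I would show each $f_j$ is $P_\alpha$-invariant in $Q$ by reducing to Proposition~\ref{socle-is-invariant-prop}. The key observation is that every block upper-triangular $g \in P_\alpha$ sends each $x_k$ with $k \leq A_j$ into $\spanof(x_1,\ldots,x_{A_j})$, hence preserves the subring $S_j := \FF_q[x_1,\ldots,x_{A_j}] \subseteq S$; the induced action on $S_j/\mm_j^{[q]}$ factors through $GL_{A_j}(\FF_q)$. Proposition~\ref{socle-is-invariant-prop} applied with $n\leadsto A_j$ and $m=1$ then says the image of $f_j$ in $S_j/\mm_j^{[q]}$ is invariant, and the $P_\alpha$-equivariant inclusion $S_j/\mm_j^{[q]} \hookrightarrow Q$ carries this invariance into $Q$.

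For the upper bound, the first step uses the diagonal torus $(\FF_q^\times)^n \subset P_\alpha$: each scaling $x_k \mapsto \lambda_k x_k$ multiplies $\xx^a$ by $\lambda^a$, so torus-invariance in $Q$ forces monomials to have every $a_k \in \{0,q-1\}$. Hence every $v \in Q^{P_\alpha}$ has the form $v = \sum_{T \subseteq [n]} c_T e_T$ with $e_T := \prod_{k \in T} x_k^{q-1}$. The second step is to apply each transvection $u_{ij}^c \in P_\alpha$ to $v$ and expand $(x_j + c x_i)^{q-1} = \sum_{r=0}^{q-1} \binom{q-1}{r} c^r x_i^r x_j^{q-1-r}$. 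A direct case analysis shows $u_{ij}^c \cdot e_T \equiv e_T \pmod{\mm^{[q]}}$ unless $j \in T, i \notin T$; in the remaining case none of the $q$ monomials in the expansion vanishes modulo $\mm^{[q]}$, and since Lucas' theorem gives $\binom{q-1}{r} \not\equiv 0 \pmod p$ for every $r$, linear independence of monomials (using an intermediate $r \in \{1,\ldots,q-2\}$ when $q\geq 3$, or the bijection $T \leftrightarrow (T\setminus\{j\})\cup\{i\}$ on $e$-monomials when $q=2$) forces $c_T = 0$.

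The third step is to translate these vanishing conditions into a classification of the surviving $T$'s. The intra-block transvections exist in both directions within each diagonal block (since the blocks of $P_\alpha$ are full $GL_{\alpha_s}$'s, not just Borels), and together they force $T \cap \mathrm{block}\,s \in \{\emptyset, \mathrm{block}\,s\}$ for each $s$; the inter-block transvections (with $\mathrm{block}(i) < \mathrm{block}(j)$) then force $S := \{s : \mathrm{block}\,s \subseteq T\}$ to be downward-closed in $[\ell]$, so $T = [A_{j_0}]$ for some $j_0 \in \{0,1,\ldots,\ell\}$. Thus $v \in \spanof(f_0,\ldots,f_\ell)$, which is the desired upper bound. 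The one subtle point I expect to have to watch is remembering to use both directions of intra-block transvection: without them one obtains only the weaker conclusion that $T$ is an initial segment of $[n]$, spuriously yielding $n+1$ candidate invariants rather than the correct $\ell+1$.
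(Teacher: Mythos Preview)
Your proof is correct and follows essentially the same strategy as the paper's: compute $C_{\alpha,1}(t)=\sum_{j=0}^\ell t^{A_j(q-1)}$, then show that $Q^{P_\alpha}$ has $\FF_q$-basis $\{(x_1\cdots x_{A_j})^{q-1}\}_{j=0}^\ell$ by first using the diagonal torus to reduce to monomials $e_T=\prod_{k\in T}x_k^{q-1}$ and then using transvections to force $T=[A_j]$. The only differences are organizational. For the lower bound, you appeal to Proposition~\ref{socle-is-invariant-prop} on the subring $\FF_q[x_1,\ldots,x_{A_j}]$, whereas the paper verifies invariance of each $(x_1\cdots x_k)^{q-1}$ under $u_{ij}$ directly; your route is a clean shortcut. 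For the upper bound, the paper first restricts to the Borel $B$, concluding that $T$ must be an initial segment $[k]$ of $[n]$, and then uses the block permutation subgroup $\Symm_{\alpha_1}\times\cdots\times\Symm_{\alpha_\ell}\subset P_\alpha$ to force $k\in\{A_0,\ldots,A_\ell\}$; you instead use intra-block transvections in both directions (available because the diagonal blocks are full $GL_{\alpha_s}$'s) to get block-completeness directly, then inter-block transvections for downward-closure. Both arguments reach the same conclusion with comparable effort; your version avoids the extra symmetric-group step, while the paper's version never needs the $q=2$ case split since it reads off the coefficient of $e_{T\setminus\{j\}\cup\{i\}}$ uniformly.
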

\begin{proof}
Given the composition $\alpha=(\alpha_1,\ldots,\alpha_\ell)$ of $n$,
the only weak compositions $\beta$ with $0 \leq \beta \leq \alpha$ and
$|\beta| \leq m=1$ are $\beta=0$ and $\beta=e_k=(0,\ldots,0,1,0,\ldots,0)$
for $k=1,2,\ldots,\ell$.  One therefore finds that
$$
C_{\alpha,1}(t)
=t^{e(1,\alpha, 0)} \qbin{1}{0,1}{q,t} +
\sum_{k=1}^\ell t^{e(1,\alpha, e_k)} \qbin{1}{e_k,0}{q,t} 
= t^{n(q-1)} + \sum_{k=1}^\ell t^{A_{k-1} (q-1)} 
= \sum_{k=0}^\ell t^{A_k(q-1)},
$$
recalling that $A_\ell=n$ and the convention that $A_0=0$.
Thus to show $C_{\alpha,1}(t)=\Hilb( Q^{P_{\alpha}}, t)$,
it will suffice to show that $Q^{P_{\alpha}}$
has $\FF_q$-basis given by the images of the monomials
\begin{equation}
\label{square-free-parabolic-invariant-monomials}
\{ (x_1 x_2 \cdots x_{A_k})^{q-1} \}_{k=0,1,\ldots,\ell}.
\end{equation}
To argue this, consider any polynomial
$$
f(\xx)=\sum_{\substack{a=(a_1,\ldots,a_n) 
               \\ a_i \in \{0,1,\ldots,q-1\}}} c_a \xx^a
$$
representing an element of the quotient 
$Q=S/\mm^{[q]}$.
One has that $f(\xx)$ is invariant 
under the {\it diagonal matrices} $T$ inside $P_\alpha$
if and only if each entry $a_i$ is either $0$ or $q-1$, that is, if $f(\xx)$ has
the form
\begin{equation}
\label{square-free-monomial-expression}
f(\xx) = \sum_{A \subset \{1,2,\ldots,n\}} c_A \,\, \xx_A^{q-1} 
\end{equation}
where $\xx_A:=\prod_{j \in A} x_j$, so that $\xx_A^{q-1} = \prod_{j \in A} x_j^{q-1}$.

We claim that such an $f$ is 
furthermore invariant under the {\it Borel subgroup} $B$
of upper triangular matrices if and only if 
each monomial $\xx_A^{q-1}$ in the support of $f$
has $A$ forming an initial segment $A=\{1,2,\ldots,k\}$ for some $k$.
To see this claim, note that $B$ is generated by $T$ together with 
$\{ u_{ij}:  1 \leq i < j \leq n\}$
where $u_{ij}$  sends $x_j \mapsto x_j + x_i$ and fixes 
all other variables $x_\ell$ with $\ell \neq j$.  Working mod $\mm^{[q]}$ one checks that
$$
u_{i,j}( \xx_A^{q-1} ) =
\begin{cases}
\xx_A^{q-1} & \text{ if }\{i,j\} \cap A \neq \{j\},\\
\xx_A^{q-1} + \xx_{A \setminus \{j\} \cup \{i\}}^{q-1} 
  & \text{ if }\{i,j\} \cap A = \{j\}.
\end{cases}
$$
From this it is easily seen that each monomial $(x_1 x_2 \cdots x_k)^{q-1}$ has $B$-invariant
image in $Q$.  On the other hand,  if $f(\xx)$ as 
in \eqref{square-free-monomial-expression} has $c_A \neq 0$ for
some $A$ which is not an initial segment,
then there exists $1 \leq i < j \leq n$ for which
$\{i,j\} \cap A = \{j\}$, and one finds 
that $u_{i,j}(f) \neq f$, since
$u_{i,j}(f)-f$ has coefficient $c_A$ on 
$\xx_{A \setminus \{j\} \cup \{i\}}^{q-1}$.

Lastly, an element of this more specific form
$f(\xx) = \sum_{k=0}^n c_k (x_1 x_2 \cdots x_k)^{q-1}$ 
will furthermore be invariant under the
subgroup $\Symm_{\alpha_1} \times \cdots \times \Symm_{\alpha_\ell}$
of block permutation matrices inside $P_\alpha$ if and only if
it is supported on the monomials in
\eqref{square-free-parabolic-invariant-monomials}.
Since $P$ is generated by the Borel subgroup $B$ together with
this subgroup $\Symm_{\alpha_1} \times \cdots \times \Symm_{\alpha_\ell}$,
the monomials in \eqref{square-free-parabolic-invariant-monomials}
give an $\FF_q$-basis for $Q^{P_\alpha}$.
\end{proof}

\section{The cofixed quotient $S_G$ as an $S^G$-module}
\label{module-over-invariants-section}

Note that Parabolic Conjecture~\ref{cofixed-conjecture} has the following two consequences for 
the rational function 
$\frac{\Hilb(S_{P_\alpha},t)}{\Hilb(S^{P_\alpha},t)}$:
\begin{eqnarray}
\label{denominator-bound}
&\displaystyle \frac{\Hilb(S_{P_\alpha},t)}{\Hilb(S^{P_\alpha},t)} & \text{ lies in } \ZZ[t], \text{ and }\\
\label{rank-one-limit-consequence}
\lim_{ t \rightarrow 1 } & \displaystyle \frac{\Hilb(S_{P_\alpha},t)}{\Hilb(S^{P_\alpha},t)}& = 1.
\end{eqnarray}
The goal of the subsections below is to explain why 
\eqref{denominator-bound}, \eqref{rank-one-limit-consequence} do indeed hold,
essentially due to three facts:
\begin{itemize}
\item[(1)]
the $P_\alpha$-cofixed quotient $S_{P_\alpha}$ is a finitely generated module over
the $P_\alpha$-invariant ring $S^{P_\alpha}$;
\item[(2)]
while $S_{P_\alpha}$ is {\it not} in general a free $S^{P_\alpha}$-module, 
it does always have $S^{P_\alpha}$-rank one; and
\item[(3)]
the $P_\alpha$-invariant ring $S^{P_\alpha}$ is polynomial, as shown in
\cite{Hewett,Mui}.
\end{itemize}

\subsection{The cofixed spaces as a module over fixed subalgebra}

Facts (1), (2) above hold generally for
finite group actions, and are analogous to 
well-known facts about invariant rings.
As we have not found them
in the literature, we discuss them here.

\begin{proposition}
\label{proposition: simple basis}
Fix a field $k$, a $k$-algebra $R$, an $R$-module $M$,
and let $G$ be any subgroup of $\Aut_R(M)$,
the $R$-module automorphisms of $M$.
Then one has that
\begin{enumerate}
\item[(i)]
the $k$-linear span $N$ of all elements 
$\{g(m)-m\}_{g \in G, m \in M}$ is an $R$-submodule of $M$, and hence
\item[(ii)]
the cofixed space $M_G:=M/N$ is a quotient $R$-module of $M$.
\end{enumerate}
Furthermore, if $\{ m_i\}_{i \in I}$ generate $M$ as an $R$-module, and
if $\{g_j\}_{j \in J}$ generate $G$ as a group, then
\begin{enumerate}
\item[(iii)]
the images
$\{\overline{m}_i\}_{i \in I}$ generate $M_G$ as an $R$-module, and
\item[(iv)]
the elements 
$\{g_j^{\pm 1}(m_i)-m_i\}_{i \in I,  j \in J}$ 
generate $N$ as an $R$-module.
\end{enumerate}
\end{proposition}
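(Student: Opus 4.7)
The plan is to dispatch (i)--(iii) by essentially formal manipulations using $R$-linearity, and to devote the bulk of the work to the telescoping argument needed for (iv).

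For (i), I would exploit the fact that each $g \in G$ acts $R$-linearly on $M$: for any $r \in R$ and $m \in M$, the identity
$$r \cdot (g(m) - m) = g(rm) - rm$$
shows that $R \cdot N \subseteq N$, so $N$ is an $R$-submodule and (ii) follows by passing to the quotient. Part (iii) is then immediate, since the images of the $m_i$ generate the quotient $R$-module $M/N = M_G$.

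The substantive part is (iv). Let $N'$ denote the $R$-submodule generated by $\{g_j^{\pm 1}(m_i) - m_i\}_{i,j}$; the containment $N' \subseteq N$ is automatic, since each generator of $N'$ lies in $N$ and $N$ is already an $R$-submodule by (i), so I need the reverse. My plan is a two-stage reduction. First, since $N$ is the $k$-span of $\{g(m) - m\}_{g \in G,\, m \in M}$, it suffices to place each such element in $N'$. Fix $g \in G$ and $m \in M$, and write $g = h_1 h_2 \cdots h_s$ with each $h_k \in \{g_j^{\pm 1}\}_{j \in J}$. Setting $n_k := h_{k+1} h_{k+2} \cdots h_s(m)$, so that $n_0 = g(m)$, $n_s = m$, and $n_{k-1} = h_k(n_k)$, the telescope
$$g(m) - m \;=\; \sum_{k=1}^s \bigl( n_{k-1} - n_k \bigr) \;=\; \sum_{k=1}^s \bigl( h_k(n_k) - n_k \bigr)$$
reduces the problem to showing $h(n) - n \in N'$ for every $h \in \{g_j^{\pm 1}\}$ and every $n \in M$. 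For the second stage I would expand $n = \sum_i r_i m_i$ using the $R$-generators of $M$ and invoke the $R$-linearity of $h$ to rewrite
$$h(n) - n \;=\; \sum_i r_i \bigl( h(m_i) - m_i \bigr),$$
placing each summand in $N'$ by definition.

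The main obstacle is just bookkeeping: one must keep track of where $R$-linearity of $G$ is deployed (once in (i) to make $N$ an $R$-submodule, and again in the second stage of (iv) to pull scalars $r_i$ through $h$). Once this is correctly organized, the two reductions compose to express an arbitrary $g(m) - m$ as an $R$-linear combination of the asserted generators of $N'$, which completes (iv).
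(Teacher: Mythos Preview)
Your proof is correct and follows essentially the same approach as the paper's: both arguments reduce (iv) to a telescoping identity on a word in the generators $g_j^{\pm 1}$, combined with an expansion of the intermediate elements in terms of the $R$-module generators $m_i$ using $R$-linearity of the group action. The only cosmetic difference is that the paper phrases the telescope as a single inductive step, writing $g_1g_2(m)-m = \sum_i r_i(g_1(m_i)-m_i) + (g_2(m)-m)$ with $g_2(m)=\sum_i r_i m_i$, whereas you unroll the entire telescope at once and then handle each summand; the content is the same.
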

\begin{proof}
All assertions are completely straightforward, except possibly for
(iv), which relies on this calculation:
$$
g_1 g_2(m) - m = g_1 g_2(m) -g_2(m) + g_2(m) - m
$$
and the hypotheses let one express $g_2(m) = \sum_{i \in I} r_i m_i$ for some $r_i$ in $R$, so that one can rewrite this as
\[
g_1 g_2(m) - m = \sum_{i \in I} r_i (g_1 (m_i) - m_i) + (g_2(m) - m). 
\qedhere
\]
\end{proof}

\begin{corollary}
Let $S$ be a finitely generated 
$k$-algebra and $G$ a finite subgroup of $k$-algebra
automorphisms of $S$, e.g., $S=k[x_1,\ldots,x_n]$ and $G$ a finite subgroup
of $GL_n(k)$ acting by linear substitutions.

Then the $G$-cofixed space $S_G$ is a finitely generated module 
over the $G$-fixed subalgebra $S^G$.
\end{corollary}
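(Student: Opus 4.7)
The plan is to reduce this to the classical Hilbert--Noether finite generation theorem, using the previous proposition to turn the $G$-cofixed construction into an $S^G$-module quotient of $S$ itself.

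First, I would view $S$ as a module over the ring $R := S^G$. Because $G$ acts on $S$ by $k$-algebra automorphisms and fixes $S^G$ pointwise, for any $r \in S^G$, $s \in S$, and $g \in G$ one has $g(rs) = g(r)g(s) = r\cdot g(s)$, so each $g \in G$ is in particular an $S^G$-module automorphism of $S$. Thus Proposition~\ref{proposition: simple basis} applies with $R = S^G$ and $M = S$, yielding that the span $N$ of $\{g(s)-s : g \in G,\, s \in S\}$ is an $S^G$-submodule of $S$, and consequently that $S_G = S/N$ is a quotient $S^G$-module of $S$.

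Second, I would invoke the standard fact that $S$ is itself finitely generated as an $S^G$-module; since any quotient $S^G$-module of a finitely generated $S^G$-module is finitely generated, this will finish the proof. For the standard fact, the argument I have in mind is the classical one: each $s \in S$ is a root of the monic polynomial $\prod_{g \in G}(t - g(s))$, whose coefficients are elementary symmetric functions in the $G$-orbit of $s$ and hence lie in $S^G$. Therefore $S$ is integral over $S^G$. Since $S$ is a finitely generated $k$-algebra and $k \subseteq S^G$, it is a finitely generated $S^G$-algebra, and a finitely generated algebra that is integral over its base ring is a finitely generated module over that base.

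The main (quite minor) conceptual point to verify is the compatibility that lets one apply Proposition~\ref{proposition: simple basis}, namely that a $k$-algebra automorphism of $S$ fixing $S^G$ is automatically an $S^G$-module automorphism; beyond that, both ingredients are completely standard. I do not anticipate any real obstacle; the corollary is really just a packaging of these two observations together with the earlier proposition.
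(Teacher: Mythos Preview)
Your proposal is correct and follows essentially the same approach as the paper: use Proposition~\ref{proposition: simple basis} to identify $S_G$ as an $S^G$-module quotient of $S$, then cite the classical argument that $S$ is a finitely generated $S^G$-module via integrality (the monic polynomial $\prod_{g\in G}(t-g(s))$) together with $S$ being a finitely generated $S^G$-algebra. You spell out the verification that $G$ acts by $S^G$-module automorphisms a bit more explicitly than the paper does, but the argument is otherwise the same.
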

\begin{proof}
Via Proposition~\ref{proposition: simple basis}(ii,iii),
it suffices to show that $S$ is a finitely generated $S^G$-module.
This is well-known argument via \cite[Cor. 5.2]{AtiyahMacdonald};
see \cite[Thm. 1.3.1]{Benson}, \cite[Thm. 2.3.1]{Smith}. One has
that $S$ is integral over $S^G$, as any $x$ in $S$ satisfies the
monic polynomial $\prod_{g \in G}(t-g(x))$ in $S^G[t]$, and
$S$ is finitely generated as an algebra over $S^G$
because it is finitely generated as a $k$-algebra.
\end{proof}

\begin{example}
In the case of $M=S=\FF_q[x_1,\ldots,x_n]$ and
$G=GL_n(\FF_q)$, one has that $S$ is even
a {\it free} $S^G$-module of rank $|G|$ with
an explicit $S^G$-basis of monomials 
$
\{ x^\alpha \}_{0 \leq \alpha_i \leq q^n-q^{i-1}-1}
$
provided by Steinberg \cite{Steinberg} in his proof of Dickson's Theorem.
Consequently, $S_G$ is generated by the images of these monomials,
and Proposition~\ref{proposition: simple basis}(iv)
leads to an explicit finite presentation of $S_G$ as a 
quotient of the free $S^G$-module $S$, useful for computations.
\end{example}

\begin{corollary}
When a finite subgroup $G$ of $GL_n(k)$ acting by
linear substitutions on $S=k[x_1,\ldots,x_n]$ has
$G$-fixed subalgebra $S^G$ which is again a polynomial algebra,
then $\Hilb(S_G,t)/\Hilb(S^G,t)$ lies in
$\ZZ[t]$.
\end{corollary}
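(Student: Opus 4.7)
The plan is to apply Hilbert's Syzygy Theorem to the $S^G$-module $S_G$. I have three ingredients readily at hand: (a) $S_G$ is a finitely generated module over $S^G$, by the previous corollary; (b) since $G$ acts by linear substitutions, it preserves the standard grading on $S$, so the submodule $N \subset S$ from Proposition~\ref{proposition: simple basis}(i) is a \emph{graded} $S^G$-submodule, making $S_G = S/N$ a finitely generated \emph{graded} $S^G$-module generated in non-negative degrees; and (c) by hypothesis $S^G$ is a polynomial $k$-algebra, hence a regular ring.

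The first step is to invoke Hilbert's Syzygy Theorem on the graded polynomial ring $S^G$: every finitely generated graded $S^G$-module admits a finite graded free resolution
\[
0 \longrightarrow F_d \longrightarrow F_{d-1} \longrightarrow \cdots \longrightarrow F_0 \longrightarrow S_G \longrightarrow 0
\]
in which each $F_i$ is a finite direct sum of graded shifts $F_i = \bigoplus_j S^G(-a_{ij})$. Because $S_G$ lives only in non-negative degrees, a standard induction on homological degree in the minimal resolution forces all shifts $a_{ij} \geq 0$.

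The second step is to take the alternating sum of Hilbert series across this resolution. Since $\Hilb(S^G(-a),t) = t^a \, \Hilb(S^G,t)$, additivity of Hilbert series on short exact sequences yields
\[
\Hilb(S_G,t) \;=\; \Hilb(S^G,t) \cdot \sum_{i=0}^{d} (-1)^i \sum_{j} t^{a_{ij}}.
\]
The factor $P(t) := \sum_i (-1)^i \sum_j t^{a_{ij}}$ lies in $\ZZ[t]$ (with possibly negative coefficients, reflecting the alternating sum), which gives the claim.

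There is no real obstacle here beyond bookkeeping; the only subtle point is confirming that the exponent shifts in the resolution are non-negative so that the quotient $\Hilb(S_G,t)/\Hilb(S^G,t)$ is a genuine polynomial in $t$ rather than a Laurent polynomial. This is immediate from the fact that $S_G$ is a graded quotient of $S$, hence concentrated in degrees $\geq 0$, which forces a minimal free resolution to have generators in non-negative degrees at every homological stage.
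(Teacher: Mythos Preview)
Your proof is correct and follows essentially the same route as the paper: apply the Hilbert syzygy theorem to the finitely generated graded $S^G$-module $S_G$, then take the alternating sum of Hilbert series across the resulting finite free resolution. You even supply a justification the paper omits, namely why the degree shifts $a_{ij}$ in the resolution are all non-negative.
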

\begin{proof}
When $S^G$ is polynomial,
the {\it Hilbert syzygy theorem} (see e.g.\ \cite[\S 2.1]{Benson}, 
\cite[\S 6.3]{Smith}) implies that $S_G$ will have a finite
$S^G$-free resolution 
$
0 \rightarrow F_n \rightarrow \cdots \rightarrow F_1 \rightarrow F_0 \rightarrow S_G \rightarrow 0
$
where $F_i = \bigoplus_{j \geq 0} S^G(-j)^{\beta_{i,j}}$ for some nonnegative integers $\beta_{i,j}$.
Here $R(-j)$ denotes a copy of the graded ring $R$, regarded as a module over itself, but
with grading shift so that the unit $1$ is in degree $j$, so that 
$
\Hilb(F_i,t)=\Hilb(S^G,t) \cdot \sum_{j \geq 0} \beta_{i,j} t^j.
$
Considering Euler characteristics in each homogeneous component of the 
resolution gives
$$
\Hilb(S^G,t) \sum_{i ,j \geq 0} (-1)^i \beta_{i,j} t^j 
= \Hilb(S_G,t)
$$  
so that $\Hilb(S_G,t)/\Hilb(S^G,t) = \sum_{i ,j \geq 0} (-1)^i \beta_{i,j} t^j$ 
lies in $\ZZ[t]$.
\end{proof}

\subsection{The cofixed space is a rank one module}

We next explain, via consideration of the rank of $S_G$ as an $S^G$-module,
why one should expect \eqref{rank-one-limit-consequence} to hold.

\begin{definition}
Recall \cite[\S 12.1]{DummitFoote}
for a finitely generated $M$ over an integral domain $R$, 
that $\rank_R(M)$ is the maximum size of an $R$-linearly independent
subset of $M$.
\end{definition}

Alternatively, $\rank_R(M)$ is the largest
integer $r$ such that $M$ contains a free $R$-submodule $R^r$, 
and in this situation, the quotient
$M/R^r$ will be all {\it $R$-torsion}, that is, for every $x$ in $M/R^r$
there exists some $a \neq 0$ in $R$ with $ax=0$.
One can equivalently define this using
the {\it field of fractions} $K := \Frac(R)$ via
\begin{equation}
\label{rank-as-frac-dimension}
\rank_R(M):=\dim_{K}\left(  K \otimes_R M \right).
\end{equation}
Indeed, clearing denominators shows that a subset $\{m^{(i)}\} \subset M$ is $R$-linearly independent 
if and only if $\{1 \otimes m^{(i)}\} \subset  K \otimes_R M$
is $K$-linearly independent.
 
In the graded setting, one has the following well-known characterization of rank via Hilbert series.
\begin{proposition}
\label{graded-module-rank-as-a-limit}
For $R$ an integral domain which is also a 
finitely generated graded $k$-algebra,
and $M$ a finitely generated graded $R$-module, 
the rational functions $\Hilb(R,t)$ and $\Hilb(M,t)$ satisfy
\[
\rank_R(M) = \lim_{t \rightarrow 1} \frac{\Hilb(M,t)}{\Hilb(R,t)}.
\]
\end{proposition}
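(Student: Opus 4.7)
The plan is to split $M$ via a short exact sequence $0 \to F \to M \to T \to 0$ in which $F$ is a graded free $R$-submodule of rank $r := \rank_R M$ and $T$ is torsion. The free piece will contribute exactly $r$ to the limit, and the main task will be to show that the torsion piece contributes $0$.

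To build $F$, I would invoke the identification \eqref{rank-as-frac-dimension} and select homogeneous elements $m_1, \ldots, m_r \in M$ whose images in $K \otimes_R M$ form a $K$-basis; such homogeneous lifts exist by picking any representatives and clearing denominators. The $K$-linear independence of the images forces $R$-linear independence of $m_1, \ldots, m_r$, so the graded submodule $F := Rm_1 + \cdots + Rm_r$ is free, isomorphic to $\bigoplus_{i=1}^r R(-\deg m_i)$, and the quotient $T := M/F$ satisfies $K \otimes_R T = 0$, hence is torsion. The associated short exact sequence then yields
\[
\frac{\Hilb(M,t)}{\Hilb(R,t)} \;=\; \sum_{i=1}^r t^{\deg m_i} \;+\; \frac{\Hilb(T,t)}{\Hilb(R,t)},
\]
whose first summand tends to $r$ as $t \to 1$.

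The main step, and the main obstacle, is bounding the torsion contribution. The first claim is that there exists a nonzero \emph{homogeneous} $a \in R$ with $aT = 0$: for each homogeneous generator $n_j$ of $T$, choose a nonzero annihilator $b_j \in R$, decompose it into homogeneous components, and observe that since the pieces $b_j^{(k)} n_j$ have distinct degrees and sum to zero, each must vanish individually, producing a homogeneous annihilator $a_j \neq 0$ of $n_j$. Then $a := \prod_j a_j$ is nonzero (since $R$ is a domain), homogeneous, and annihilates $T$; this is the one place the integral-domain hypothesis is essential. Because $a$ is a non-zero-divisor in $R$, the short exact sequence $0 \to R(-\deg a) \xrightarrow{\cdot a} R \to R/(a) \to 0$ gives $\Hilb(R/(a),t) = (1 - t^{\deg a})\Hilb(R,t)$. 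Viewing $T$ as a finitely generated graded $R/(a)$-module, a graded surjection $\bigoplus_j (R/(a))(-e_j) \twoheadrightarrow T$ yields the coefficient-wise inequality $\Hilb(T,t) \le \sum_j t^{e_j}\, \Hilb(R/(a),t)$, so for $t \in [0,1)$,
\[
0 \;\le\; \frac{\Hilb(T,t)}{\Hilb(R,t)} \;\le\; \sum_j t^{e_j}\bigl(1 - t^{\deg a}\bigr),
\]
and the right-hand side tends to $0$ as $t \to 1$. Combined with the preceding display, this gives $\lim_{t \to 1}\Hilb(M,t)/\Hilb(R,t) = r$, as required.
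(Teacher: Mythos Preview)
Your argument is correct and follows essentially the same route as the paper: build a graded free submodule $F$ of rank $r$, note that $M/F$ is torsion, and bound the torsion contribution by a coefficientwise inequality coming from a surjection $\bigoplus_j (R/(a))(-e_j) \twoheadrightarrow T$. The paper does the same thing, with the cosmetic difference that it uses a separate homogeneous annihilator $\theta_i$ for each generator (rather than a single global $a = \prod_j a_j$) and phrases the conclusion in terms of pole orders at $t=1$ rather than a squeeze on $[0,1)$; both formulations are equivalent here.

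One small point to tighten: your justification ``such homogeneous lifts exist by picking any representatives and clearing denominators'' does not actually produce \emph{homogeneous} elements---clearing denominators from an arbitrary $K$-basis of $K \otimes_R M$ gives $R$-linearly independent elements of $M$, but they need not be homogeneous. The paper's fix (and the standard one) is to start from any $R$-linearly independent set $\{m^{(i)}\}$ of size $r$, decompose each $m^{(i)}$ into homogeneous components $m^{(i)}_j$, and observe that the $R$-span of all the $m^{(i)}_j$ contains the $R$-span of the $m^{(i)}$, hence has rank $\ge r$, so some $r$ of the homogeneous components are already $R$-linearly independent.
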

\begin{proof}
Letting $r:=\rank_R(M)$, we claim that 
one can choose an $R$-linearly independent subset of size $r$
in $M$ consisting of {\it homogeneous} elements as follows.
Given {\it any} $R$-linearly independent subset $\{m^{(i)}\}_{i=1,2\ldots,r}$,
decompose them into their homogeneous components $m^{(i)}=\sum_j m^{(i)}_j$.
Then the set of all such components  $\{m^{(i)}_j\}$ spans an 
$R$-submodule of $M$ containing the $R$-submodule spanned by
$\{m^{(i)}\}_{i=1,2,\ldots,r}$.  Thus the set of all such components must
contain an $R$-linearly independent subset of size $r$.

Now consider the free $R$-submodule $R^r:=\oplus_{i=1}^r Rm_i$
spanned by a homogeneous $R$-linearly independent subset $\{m_i\}_{i=1,2,\ldots,r}$, 
so that the quotient $M/R^r$ will be all $R$-torsion.  Then
$$
\lim_{t \rightarrow 1} \frac{\Hilb(M,t)}{\Hilb(R,t)}
 = \lim_{t \rightarrow 1}  \frac{\Hilb(R^r,t)}{\Hilb(R,t)} 
  +  \lim_{t \rightarrow 1}\frac{\Hilb(M/R^r,t)}{\Hilb(R,t)}.
$$
Since $\Hilb(R^r,t) / \Hilb(R,t) = \sum_{i=1}^r t^{\deg(m_i)}$,
the first limit on the right is $r$.
One argues that the second limit on the right vanishes as follows.
Assume $R$ has Krull dimension $d$, that is, $\Hilb(R,t)$ 
has a pole of order $d$ at $t=1$.  Thus one must show that
$\Hilb(M/R^r,t)$ has its pole of order at most $d-1$.
To this end, choose homogeneous 
generators $y_1,\ldots,y_N$ for the $R$-torsion module $M/R^r$,
say with $\theta_i y_i=0$ for nonzero homogeneous $\theta_i$ 
in $R$.  Then one has a graded $R$-module surjection
$
\bigoplus_{i=1}^N R/(\theta_i) (-\deg(y_i))
\twoheadrightarrow M/R^r
$
sending the basis element of $R/(\theta_i)$ to $y_i$.
This gives a coefficientwise inequality 
\begin{equation}
\label{coefficientwise-inequality}
\Hilb(M/R^r,t) \leq \sum_{i=1}^N t^{\deg(y_i)} \Hilb(R/(\theta_i),t)
 = \sum_{i=1}^N (1-t^{\deg(\theta_i)})  t^{\deg(y_i)} \Hilb(R,t)
\end{equation}
between power series with nonnegative coefficients, which are
also rational functions having poles confined to the unit circle. 
As each summand on the right of \eqref{coefficientwise-inequality}
has a pole of order at most $d-1$ at $t=1$,
the same holds for $\Hilb(M/R^r,t)$.
\end{proof}

For a subgroup $G$ of ring automorphisms of the
domain $S$, denote by $K:=\Frac(S)^G$ the $G$-invariant subfield of $L:=\Frac(S)$.
When $G$ is finite, an easy argument  \cite[Prop. 1.1.1]{Benson}, 
\cite[Prop. 1.2.4]{Smith} shows that 
$$
K:=\Frac(S^G) = \Frac(S)^G \left( =L^G \right),
$$
giving this commuting diagram of inclusions
\begin{equation}
\label{invariant-ring-field-inclusions}
\xymatrix{
S \ar@{^{(}->}[r] &L \\
S^G \ar@{^{(}->}[r] \ar@{^{(}->}[u]&K\ar@{^{(}->}[u]
}
\end{equation}
Consequently, Proposition~\ref{graded-module-rank-as-a-limit}
together with the next result immediately imply 
\eqref{rank-one-limit-consequence}.

\begin{proposition}
\label{rank-one-prop}
A finite group $G$ of automorphisms of an integral domain $S$
has $\rank_{S^G} S_G=1$.
\end{proposition}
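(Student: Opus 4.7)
The plan is to pass to fraction fields: set $K := \Frac(S^G) = \Frac(S)^G$ and $L := \Frac(S)$, with the equality coming from \eqref{invariant-ring-field-inclusions}. By \eqref{rank-as-frac-dimension}, $\rank_{S^G} S_G = \dim_K(K \otimes_{S^G} S_G)$, so it suffices to show this dimension equals $1$.

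First, I would identify $K \otimes_{S^G} S$ with $L$. The tensor product is the localization of $S$ at $S^G \setminus \{0\}$, which embeds in $L$ since $S$ is a domain. Every $\ell \in L$ can be rewritten with an $S^G$-denominator: if $\ell = s/s'$ with $s,s' \in S$, then multiplying numerator and denominator by $\prod_{g \ne e} g(s')$ produces the denominator $\prod_{g \in G} g(s') \in S^G$. Hence this localization is all of $L$.

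Applying $K \otimes_{S^G}(-)$ to the short exact sequence $0 \to N \to S \to S_G \to 0$ (where $N$ is the $S^G$-submodule given by Proposition~\ref{proposition: simple basis}(i)) and using right-exactness, I obtain $K \otimes_{S^G} S_G \cong L/N'$, where $N'$ is the image of $K \otimes_{S^G} N$, i.e., the $K$-submodule of $L$ generated by $N$. I would then check that $N'$ coincides with the $K$-span of $\{g(\ell) - \ell : \ell \in L,\ g \in G\}$: any $\ell \in L$ can be written $\ell = s/a$ with $a \in S^G$, whence $g(\ell) - \ell = (g(s) - s)/a \in K \cdot N = N'$. Thus $L/N'$ is precisely the $G$-cofixed quotient $L_G$ of $L$ viewed as a $K$-vector space.

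Finally, since $G$ acts faithfully on $L$ with $L^G = K$, Artin's theorem makes $L/K$ Galois with group $G$, and the normal basis theorem provides a $KG$-module isomorphism $L \cong KG$. This identifies $L_G$ with $(KG)_G = KG/I \cong K$, where $I$ is the augmentation ideal (the cofixed kernel of the regular representation, spanned by the elements $(g-1)h$). The upshot is $\dim_K L_G = 1$, completing the proof. The key feature of this strategy is that it bypasses any averaging or idempotent argument and therefore works in all characteristics, including when the characteristic divides $|G|$ (which is the regime relevant to $GL_n(\FF_q)$); the main point requiring care is the matching of the cofixed kernels under the identification $K \otimes_{S^G} S = L$.
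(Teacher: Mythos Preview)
Your proof is correct and follows essentially the same route as the paper: pass to fraction fields, identify $K \otimes_{S^G} S_G$ with $L_G$ via the localization of the defining short exact sequence and the isomorphism $K \otimes_{S^G} S \cong L$, then use the Normal Basis Theorem to get $L \cong KG$ as $KG$-modules and compute $(KG)_G \cong K$ via the augmentation. The only differences are cosmetic: you invoke right-exactness and check the image explicitly, while the paper invokes exactness of localization; and you spell out the clearing-denominators argument for $K \otimes_{S^G} S \cong L$, which the paper simply asserts.
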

\begin{proof}
Using \eqref{rank-as-frac-dimension} to characterize rank,
it suffices to show this chain of three $K$-vector space isomorphisms:
\begin{equation}
\label{three-step-isomorphism}
K \otimes_{S^G} S_G 
\quad \cong \quad L_G  
\quad \cong \quad (KG)_G 
\quad \cong \quad K.
\end{equation}
For the first step in \eqref{three-step-isomorphism}, start with the short exact sequence that defines $S_G$
$$
0 \longrightarrow
\sum_{\substack{g \in G\\s \in S}} S^G (g(s)-s)  \longrightarrow
S \longrightarrow
S_G \longrightarrow
0
$$
and apply the exact localization functor $K \otimes_{S^G}(-)$ to give the short exact sequence
\begin{equation}
\label{localized-exact-sequence}
0 \longrightarrow
\sum_{\substack{g \in G\\ s \in S}} K \otimes_{S^G}  S^G(g(s)-s)  \longrightarrow
K \otimes_{S^G} S  \longrightarrow
K \otimes_{S^G} S_G \longrightarrow
0
\end{equation}
Using the $K$-vector space isomorphism 
$
K \otimes_{S^G} S \cong  L
$
induced by 
$f \otimes s \longmapsto fs$, the sequence \eqref{localized-exact-sequence} becomes
$$
0 \longrightarrow
\sum_{\substack{g \in G\\ f \in L}} K (g(f)-f)  \longrightarrow
L  \longrightarrow
K \otimes_{S^G} S_G \longrightarrow
0
$$
which shows that $K \otimes_{S^G} S_G \cong L_G$, completing the first step.

The second step in \eqref{three-step-isomorphism} comes from considering the Galois extension 
$
K=L^G \hookrightarrow L
$
having Galois group $G$, that appears as the right vertical map in \eqref{invariant-ring-field-inclusions}.  
The Normal Basis Theorem of Galois Theory \cite[Theorem 13.1]{Lang} asserts that,
not only is $L \cong K^{|G|}$ as a $K$-vector space, 
but $L$ is even isomorphic to the {\it left-regular representation} $K G$ 
as $K G$-module.  Hence $L_G \cong (K G)_G$, completing the second step.

The third step in \eqref{three-step-isomorphism} comes from the short
exact sequence of $KG$-modules
\begin{equation}
\label{augmentation-exact-sequence}
0 \longrightarrow I_G  \longrightarrow
KG  \overset{\epsilon}{\longrightarrow}
K \longrightarrow 0.
\end{equation}
Here $G$ acts trivially on $K$, while the {\it augmentation ideal} $I_G$
is the kernel of the {\it augmentation map} $\epsilon$ sending each $K$-basis element $g$ of $KG$ to $1$ in $K$.  Since $I_G$ is $K$-spanned by $g - h$ for $g,h$ in $G$,
the sequence \eqref{augmentation-exact-sequence} shows that $(K G)_G \cong K$, completing the third step.
\end{proof}

This immediately implies the following corollary, explaining \eqref{rank-one-limit-consequence}.

\begin{corollary}
When a finite subgroup $G$ of $GL_n(k)$ acting by
linear substitutions on $S=k[x_1,\ldots,x_n]$ has
$G$-fixed subalgebra $S^G$ which is again a polynomial algebra,
then 
$$
\lim_{t \rightarrow 1} \frac{\Hilb(S_G,t)}{\Hilb(S^G,t)}=\rank_{S^G} S_G =1.
$$
\end{corollary}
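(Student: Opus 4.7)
The plan is to assemble this corollary directly from the two preceding results, Propositions~\ref{graded-module-rank-as-a-limit} and~\ref{rank-one-prop}, with only a verification that their hypotheses are met in the present setting. Nothing substantive beyond bookkeeping should be required.

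First I would unpack the assumptions. Since $S^G$ is assumed to be a polynomial algebra, it is in particular an integral domain and a finitely generated graded $k$-algebra, so it fits the hypothesis on $R$ in Proposition~\ref{graded-module-rank-as-a-limit}. Next, by the earlier corollary to Proposition~\ref{proposition: simple basis}, the cofixed space $S_G$ is a finitely generated graded $S^G$-module, fitting the hypothesis on $M$. Hence Proposition~\ref{graded-module-rank-as-a-limit} applies with $R = S^G$ and $M = S_G$ and yields
\[
\lim_{t \to 1} \frac{\Hilb(S_G,t)}{\Hilb(S^G,t)} = \rank_{S^G}(S_G).
\]

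Second, I would invoke Proposition~\ref{rank-one-prop}. Since $S = k[x_1,\ldots,x_n]$ is an integral domain and $G$ is a finite group of ring automorphisms of $S$, that proposition gives $\rank_{S^G}(S_G) = 1$. Chaining the two equalities produces the desired identity
\[
\lim_{t \to 1} \frac{\Hilb(S_G,t)}{\Hilb(S^G,t)} = \rank_{S^G}(S_G) = 1.
\]

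There is no main obstacle to overcome: the heavy lifting has already been done in the two cited propositions, and the role of the polynomial hypothesis on $S^G$ is simply to guarantee that $S^G$ is a well-behaved graded domain so that Proposition~\ref{graded-module-rank-as-a-limit} can be applied. (Note that the polynomial hypothesis is actually used more essentially in the companion corollary about $\Hilb(S_G,t)/\Hilb(S^G,t)$ lying in $\ZZ[t]$, where the Hilbert syzygy theorem is needed; here we only need that the ratio has a well-defined limit at $t = 1$.)
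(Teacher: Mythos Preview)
Your proposal is correct and matches the paper's approach exactly: the paper simply states that the corollary ``immediately'' follows from Propositions~\ref{graded-module-rank-as-a-limit} and~\ref{rank-one-prop}, and you have spelled out precisely the hypothesis-checking that makes this immediate. Your parenthetical observation that the polynomial hypothesis on $S^G$ is not really needed here (only in the companion corollary invoking the Hilbert syzygy theorem) is also accurate.
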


\subsection{On the module structure 
of the $G$-cofixed space} 

For the full general linear group $G=GL_n(\FF_q)$,
the structure of $S_G$ as an $S^G$-module exhibited for $n=1$ 
in Example~\ref{small-n-example}
and for $n=2$ in Theorem~\ref{bivariate-Stanley-decomposition} 
suggests a general question.

Recall that $S^G=\FF_q[D_{n,0},D_{n,1},\ldots,D_{n,n-1}]$
where the Dickson polynomials $D_{n,i}$ were defined in the introduction.  
Consider subalgebras of $S^G$ defined for $i=1,2,\ldots,n$ by
\[
\FF_q[Z_i]:=\FF_q[D_{n,n-i},D_{n,n-i+1},\ldots,D_{n,n-2},D_{n,n-1}].
\]

\begin{question}
\label{Stanley-decomposition-conjecture}
Does there exist a subset $M$ of homogeneous
elements minimally generating $S_G$ as an $S^G$-module, with
a decomposition $M =\bigsqcup_{i=1}^n M_i$ having the following 
properties?
\begin{itemize}
\item
The $\FF_q[Z_i]$-submodule generated by $M_i$ within  $S_G$
is $\FF_q[Z_i]$-free.
\item
The Dickson polynomials $D_{n,0},D_{n,1},\ldots,D_{n,n-i-1}$ not
in $\FF_q[Z_i]$ all annihilate every element of $M_i$.
\item 
The last set $M_n$ is a singleton, whose unique  
element has degree $(n-1)(q^n-1)$.
\end{itemize}
\end{question}

\begin{example}
In the case $n=1$, Example~\ref{small-n-example} shows that
$S_G$ is a free $S^G$-module of rank $1$ with basis element given
by the image of $1$.  This answers 
Question~\ref{Stanley-decomposition-conjecture} affirmatively by 
setting $M=M_1:=\{1\}$.
\end{example}

\begin{example}
\label{Stanley-decomposition-example-for-n=2}
In the case $n=2$, Theorem~\ref{bivariate-Stanley-decomposition} below
will answer Question~\ref{Stanley-decomposition-conjecture} 
affirmatively by setting $M=M_1 \sqcup M_2$, where 
$
M_1:=\{1,XY,X^2Y,\ldots,X^{q-2}Y\}
$
and
$
M_2:=\{X^q Y\},
$
with $X:=x^{q-1}, Y:=y^{q-1}$.
\end{example}

Before discussing the case $n=3$ in further detail, we mention 
a general recurrence for the power series
$$
f_n(t):=\sum_{k = 0}^n t^{n(q^k - 1)}\prod_{i=0}^{k-1} \frac{1}{1 - t^{q^k - q^i}}
$$
that was conjectured to equal $\Hilb(S_G,t)$ in 
Conjecture~\ref{cofixed-conjecture}. An easy calculation shows that
\begin{equation}
\label{Dennis's-recurrence}
f_n(t)=\left( f_{n-1}(t)-t^{(n-1)(q-1)} f_{n-1}(t^q) \right) +  
  \dfrac{t^{(n-1)(q^n - 1)}}{\prod_{i=0}^{n-1}( 1 - t^{q^n - q^i} )}.
\end{equation}

\noindent
An affirmative answer to Question~\ref{Stanley-decomposition-conjecture} would 
interpret the two summands on the right in \eqref{Dennis's-recurrence} as follows:
\begin{itemize}
\item 
the last summand on the right 
in \eqref{Dennis's-recurrence} 
would be the Hilbert series for the $S^G$-submodule of $S_G$ 
generated by the singleton set $M_n$, while
\item
the difference 
$f_{n-1}(t)-t^{(n-1)(q-1)} f_{n-1}(t^q)$ would be the Hilbert series 
for the $S^G$-submodule generated by $M_1 \sqcup \cdots \sqcup M_{n-1}$,
or alternatively, the kernel of multiplication by $D_{n,0}$ on
$S_G$.
\end{itemize}
Somewhat suggestively, it can be shown directly that the difference
$f_{n-1}(t)-t^{(n-1)(q-1)} f_{n-1}(t^q)$ has nonnegative coefficients as
a power series in $t$;  we omit the details of this proof.

\begin{example}
\label{Stanley-decomposition-example-for-n=3}
In the case $n=3$, the recurrence \eqref{Dennis's-recurrence} 
suggests a more precise version of 
Question~\ref{Stanley-decomposition-conjecture}
that agrees with computer experiments. 
Example~\ref{Stanley-decomposition-example-for-n=2}
shows that
$$
f_2(t)=\frac{1+t^{2(q-1)}[q-2]_{t^{q-1}}}{1-t^{q^2-q}}
           +  \frac{t^{q^2-1}}{(1-t^{q^2-q})(1-t^{q^2-1})} 
=:m_{2,1}(t)  +  m_{2,2}(t),
$$
using the notation $[n]_t:=1+t+\cdots+t^{n-1}$.
Then the recurrence \eqref{Dennis's-recurrence} applied with $n=3$ gives
\begin{equation}
\label{n=3-calculation-with-recurrence}
\begin{array}{rcccccc}
f_3(t)
&=&\left( m_{2,1}(t)-t^{2(q-1)} m_{2,1}(t^q) \right)
   &+&\left( m_{2,2}(t)-t^{2(q-1)} m_{2,2}(t^q) \right)
   &+& \displaystyle \frac{t^{2(q^3-1)}}{(1-t^{q^3-q^2})(1-t^{q^3-q})(1-t^{q^3-1})} \\
&=& \displaystyle \frac{A_1(t)}{1-t^{q^3-q^2}} 
    &+&  \displaystyle \frac{A_2(t)}{(1-t^{q^3-q^2})(1-t^{q^3-q})}
    &+&  \displaystyle \frac{A_3(t)}{(1-t^{q^3-q^2})(1-t^{q^3-q})(1-t^{q^3-1})}
\end{array}
\end{equation}
where one can compute these numerators explicitly:
\begin{equation}
\label{n=3-numerator-expressions}
\begin{aligned}
A_1(t) &= [q]_{t^{q^2-q}} + t^{2(q-1)}  ([q-2]_{t^{q-1}} (1+t^{q^2-q}) - 1 ) +
             t^{(2q+3)(q-1)} [q-2]_{t^{q^2-q}}  [q-3]_{t^{q-1}}, 
 \\
A_2(t) &= t^{q^2-1} [q]_{t^{q^2-1}} [q]_{t^{q^2-q}} -  t^{(q-1)(q^2+q+2)}, \\
A_3(t) &=t^{2(q^3-1)}.
\end{aligned}
\end{equation}
Note that $A_1(t),A_2(t)$ are polynomials in $t$ 
with nonnegative coefficients\footnote{Nonnegativity of $A_1(t), A_2(t)$ 
is manifest from
\eqref{n=3-numerator-expressions} for $q \geq 3$;  for $q=2$ it also holds,
although it is less clear from \eqref{n=3-numerator-expressions}.}.
The following conjecture has been checked by D. Stamate for $n=3$ and
$q=2,3,4,5$ using {\tt Singular}.

\begin{conjecture}
Question~\ref{Stanley-decomposition-conjecture} for $n=3$ has
affirmative answer with
$
\sum_{f \in M_i} t^{\deg(f)} = A_i(t) 
$
as in \eqref{n=3-numerator-expressions}.
\end{conjecture}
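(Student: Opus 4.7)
The plan is to exhibit explicit homogeneous generators $M = M_1 \sqcup M_2 \sqcup M_3$ of $S_G$ as an $S^G$-module and then verify in turn (i) the Dickson polynomials $D_{3,0},\ldots,D_{3,2-i}$ annihilate each $\bar{f}\in M_i$, (ii) each $\FF_q[Z_i]$-submodule generated by $M_i$ is free, (iii) the generating polynomial $\sum_{f\in M_i} t^{\deg f}$ equals $A_i(t)$ from \eqref{n=3-numerator-expressions}, and (iv) the submodules $\FF_q[Z_i]\cdot M_i$ together exhaust $S_G$. Once (i)--(iv) are established, combining (ii) and (iii) yields the Hilbert series formula \eqref{n=3-calculation-with-recurrence} for $\Hilb(S_G,t)$, simultaneously proving the Stanley decomposition for $n=3$ and Parabolic Conjecture~\ref{cofixed-conjecture} for $n=3$.

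The first step is to build $M_3=\{\eta\}$ in degree $(n-1)(q^n-1)=2(q^3-1)$. Using Gorenstein duality for $Q=S/\mm^{[q^m]}$ with $m$ large (via the pairing \eqref{Gorenstein-pairing}), $\bar\eta$ can be characterized as the class pairing nontrivially against a specific Dickson monomial in $Q^G$ of complementary degree, giving an explicit polynomial lift whose image in $S_G$ is annihilated by no Dickson polynomial. Next, one seeks $M_2\subset\ker(D_{3,0}\colon S_G\to S_G)$ with $\FF_q[D_{3,1},D_{3,2}]\cdot M_2$ free; the factorization $A_2(t)=t^{q^2-1}[q]_{t^{q^2-1}}[q]_{t^{q^2-q}}-t^{(q-1)(q^2+q+2)}$ suggests generators built from monomials in $x_i^{q-1}$ and $x_i^{q^2-q}$, mimicking the elements $X^jY$ from Example~\ref{Stanley-decomposition-example-for-n=2}. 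Finally $M_1\subset\ker(D_{3,0})\cap\ker(D_{3,1})$ should arise by parabolic induction from the $n=2$ decomposition along $P_{(2,1)}\subset G$: the recurrence \eqref{Dennis's-recurrence} specializes to $f_3(t)=(f_2(t)-t^{2(q-1)}f_2(t^q))+(\text{new singleton})$, and the Frobenius twist $t\mapsto t^q$ is the generating-function signature of lifting $n=2$ generators through a $q$th-power substitution in the new variable.

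The main obstacle is proving exhaustion (iv) together with minimality and directness of the decomposition, since $S_G$ is not in general Cohen-Macaulay over $S^G$. Without an a priori bound on $\Hilb(S_G,t)$, one must either prove that every element of $S$ is congruent modulo $N=\FF_q\text{-span}\{g(f)-f\}$ to an $S^G$-combination of the exhibited generators, or alternatively establish Parabolic Conjecture~\ref{cofixed-conjecture} for $n=3$ separately and then combine with Proposition~\ref{graded-module-rank-as-a-limit} to force exhaustion from a Hilbert series match. The most promising auxiliary tool is Gorenstein duality in the truncation $Q=S/\mm^{[q^m]}$: by Corollary~\ref{complementary-degree-corollary}, nonvanishing of a class in $(S_G)_d$ modulo $(D_{3,0},\ldots,D_{3,2-i})$ can, for $m$ large, be tested against $G$-invariants of complementary degree $d_0-d$ in $Q$ carried by explicit Dickson polynomials, and Proposition~\ref{equality-up-to-q^m-prop} gives control over these in the appropriate range. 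The Stamate computations for $q\in\{2,3,4,5\}$ then serve both to calibrate the explicit candidate generators and as a consistency test for any uniform-in-$q$ construction.
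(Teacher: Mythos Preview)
The statement you are attempting to prove is a \emph{conjecture} in the paper, not a theorem: the paper offers no proof whatsoever, only the remark that it has been verified computationally by D.~Stamate for $q=2,3,4,5$ using {\tt Singular}. There is therefore no ``paper's own proof'' to compare against.

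Your proposal is likewise not a proof but a research outline, and you acknowledge this explicitly: you write ``The main obstacle is proving exhaustion (iv)'' and then list possible approaches (Gorenstein duality, or first proving Parabolic Conjecture~\ref{cofixed-conjecture} for $n=3$) without carrying any of them out. No explicit candidate elements for $M_1, M_2, M_3$ are actually written down; the descriptions remain at the level of ``suggests generators built from monomials in $x_i^{q-1}$ and $x_i^{q^2-q}$'' and ``should arise by parabolic induction.'' This is a plan, not an argument.

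As a plan it is broadly sensible---the decomposition into freeness, annihilation, degree count, and exhaustion mirrors the successful $n=2$ analysis in Theorem~\ref{bivariate-Stanley-decomposition}, and you correctly identify exhaustion as the crux. But note one circularity: you propose to use Parabolic Conjecture~\ref{cofixed-conjecture} for $n=3$ as an input to force exhaustion via a Hilbert series match, yet that conjecture is itself open for $n=3$ (the paper proves it only for $n\le 2$), and in fact an affirmative answer to Question~\ref{Stanley-decomposition-conjecture} would \emph{imply} Conjecture~\ref{cofixed-conjecture}, not the other way around. So that route does not obviously reduce the problem.
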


\end{example}

We close this section with some remarks on 
Question~\ref{Stanley-decomposition-conjecture}, some providing evidence
on the affirmative side, and some on the negative side.

\begin{remark}
If the answer to Question~\ref{Stanley-decomposition-conjecture} is affirmative,
then it would imply that the Dickson polynomial of lowest degree 
$
D_{n,n-1}
$
acts on the $S^G$-module $S_G$ as a nonzero divisor.
One can check that this property does indeed hold for $D_{n,n-1}$
via the same argument of Karagueuzian and Symonds 
\cite[Lemma 2.5]{KaragueuzianSymonds1} used
in Proposition~\ref{Karagueuzian-Symonds-prop} below.  The key fact is
that Dickson's expression for $D_{n,n-1}$ as a quotient of determinants 
shows it to be a homogeneous polynomial in $x_1,\ldots,x_n$ of
degree $q^n-q^{n-1}$ with $x_n^{q^n-q^{n-1}}$ as its leading monomial
in $x_n$.
\end{remark}

\begin{remark}
Recurrence~\eqref{Dennis's-recurrence} with $n=4$ gives rise,
via a calculation similar to \eqref{n=3-calculation-with-recurrence},
to an expression
$$
f_4(t)=  \sum_{i=1}^{4} \frac{B_i(t)}{\prod_{j=1}^{i} (1-t^{q^4-q^{4-j}}) }.
$$
However, one finds that for $q=2$, the numerator $B_1(t)$ is equal to $1 - t^3 + t^4$,
which has a negative coefficient.  Analogous calculations for higher
values of $n$ and small values of $q$ yield similar negative coefficients in 
the other numerator terms.
\end{remark}

\begin{remark}
One might ask why Question~\ref{Stanley-decomposition-conjecture}
has  been formulated only for $G$, and not
for all parabolic subgroups $P_\alpha$ of $G$.  In fact, 
Theorem~\ref{Stanley-decomposition-for-B-with-n=2} below does prove
such a result for $n=2$, when there is only one proper
parabolic subgroup, the Borel subgroup $B=P_{(1,1)}$ inside
$G=GL_2(\FF_q)$.

However, computer calculations in {\tt Sage} suggest that a
naive formulation of such a question has a negative answer in general.
Specifically, for $n=3$ and $q=4$
with $B=P_{(1,1,1)}$ inside $G=GL_3(\FF_4)$, one
encounters the following difficulty.
One wants a minimal generating set $M$ for $S_B$ as an $S^B$-module
of a particular form.
Note that here 
$S^B=\FF_4[f_3,f_{12},f_{48}]$, 
where
$
f_3:=x^3
$,
$
f_{12}:=\prod_{c \in \FF_4} (y+cx)^3
= y^{12}+ x^3 y^9 + x^6 y^6 + x^9 y^3,
$
and
$
f_{48}:= D_{3,2}(x,y,z).
$
One can also show,  using the
idea in \cite[\S 2.1]{KaragueuzianSymonds1} and
Proposition~\ref{Karagueuzian-Symonds-prop} below,
that $f_{48}$ acts as a nonzero-divisor on $S_B$. 
Thus one might expect a decomposition of the minimal generators as 
\begin{equation}
\label{chimerical-Stanley-decomposition}
M=M_1 \sqcup M_2 \sqcup M_3 \sqcup M_4
\end{equation}
in which 
\begin{itemize}
\item $\FF_4[f_3, \,\, f_{12}, \,\, f_{48}]$ acts freely on $M_4$,
\item $\FF_4[f_{12}, \,\, f_{48}]$ acts freely on $M_3$, but $f_3$ annihilates it,
\item $\FF_4[f_3, \,\, f_{48}]$ acts freely on $M_2$, but $f_{12}$ annihilates it, and
\item $\FF_4[f_{48}]$ acts freely on $M_1$, but both $f_3, \,\, f_{12}$ annihilate it. 
\end{itemize}
We argue this is impossible as follows.
Let $(S_B)_{\leq d}:=\bigoplus_{i=0}^d (S_B)_d$, and
similarly $(S_B)_{< d}:=\bigoplus_{i=0}^{d-1} (S_B)_d$.  Given a subset $A \subset S_B$, let $S^B A$ denote the $S^B$-submodule of $S_B$ that it generates.  Then
computations show 
$
\Hilb( S^B (S_B)_{\leq 42}, t)
-\Hilb( S^B (S_B)_{<42}, t)
$
agrees up through degree $90$ with 
$$
t^{42} \cdot \Hilb(\, \FF_4[f_3,f_{48}]\, , \,t)  
\,\, +  \,\,
t^{42} \cdot \Hilb(\, \FF_4[f_{12},f_{48}]\, ,\, t).
$$
One can check that this would force that in any decomposition
\eqref{chimerical-Stanley-decomposition}, the sets $M_2, M_3$ must each
contain exactly one element of degree $42$.  But computations show that for 
every element $f$ in $(S_B)_{42}$, the difference
$$
\Hilb\left( S^B \left((S_B)_{<42} \cup \{f\}\right), t \right)
-\Hilb\left( S^B (S_B)_{<42}, t \right)
$$
is neither equal to $t^{42} \Hilb(\FF_4[f_3,f_{48}],t)$, nor to
$t^{42}  \Hilb(\FF_4[f_{12},f_{48}],t).$
Thus there are no suitable choices for these elements of $M_2, M_3$.
%

\end{remark}

\begin{remark}
Question~\ref{Stanley-decomposition-conjecture}
is reminiscent of the {\it Landweber-Stong Conjecture} in 
modular invariant theory, proven
when $q=p$ is prime by Bourguiba and Zarati \cite{BourguibaZarati}:

\begin{conjecture}[Landweber and Stong \cite{LandweberStong}]
For a subgroup $H$ of $GL_n(\FF_q)$ acting on $S=\FF_q[\xx]$,
the depth of the $H$-invariant ring $S^H$ is the maximum $i$ for which
the elements 
$
D_{n,n-i},D_{n,n-i+1},\ldots,D_{n,n-2},D_{n,n-1}
$ 
form a regular sequence on $S^H$.
\end{conjecture}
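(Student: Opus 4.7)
The plan is to prove the two directions separately. Let $d := \operatorname{depth}(S^H)$ and let $i^{\ast}$ denote the maximum $i$ such that $D_{n,n-i}, D_{n,n-i+1}, \ldots, D_{n,n-1}$ forms a regular sequence on $S^H$. The inequality $d \geq i^{\ast}$ is immediate: since $S^G = \FF_q[D_{n,0},\ldots,D_{n,n-1}] \subseteq S^H$, these Dickson polynomials are homogeneous elements of the irrelevant ideal of $S^H$, and a regular sequence of length $i^{\ast}$ there forces $d \geq i^{\ast}$ by the definition of depth.

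The substantive direction is $d \leq i^{\ast}$, asserting that among all maximal homogeneous regular sequences in the irrelevant ideal of $S^H$, those built from the lowest-degree end of the Dickson family $D_{n,n-1}, D_{n,n-2}, \ldots$ attain the maximal length. I would attempt this by induction on $k$, showing that whenever $k < d$ and $D_{n,n-1}, \ldots, D_{n,n-k}$ is a regular sequence on $S^H$, the next Dickson invariant $D_{n,n-k-1}$ is a nonzerodivisor on the quotient $S^H/(D_{n,n-1},\ldots,D_{n,n-k})$. The base case $k=0$ is precisely the nonzerodivisor property for $D_{n,n-1}$ discussed in the remark preceding the conjecture, and follows from the Karagueuzian--Symonds observation that $D_{n,n-1}$ has leading monomial $x_n^{q^n-q^{n-1}}$ in a suitable term order, so that multiplication by it is injective on $S$ and hence on the subring $S^H$.

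The inductive step is the main obstacle. In the prime case $q=p$ it is handled by Bourguiba--Zarati via the mod-$p$ Steenrod algebra $\mathcal{A}_p$: one observes that the annihilator of $D_{n,n-k-1}$ on $S^H/(D_{n,n-1},\ldots,D_{n,n-k})$ is naturally an unstable $\mathcal{A}_p$-module, and a vanishing theorem for such modules, combined with Wu-type formulas that express Steenrod operations on the Dickson invariants in terms of other Dickson invariants, forces this annihilator to be zero. For general $q=p^r$, the natural substitute is the generalized Steenrod algebra of Smith--Wilkerson acting on $S$, and one might try either a Galois-descent argument reducing to $\FF_p$-coefficients on a Frobenius-twisted module, or a local-cohomology computation on $\operatorname{Spec} S^H$ exploiting the Cohen--Macaulayness of $S$ over $S^G$ provided by Steinberg's monomial basis. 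Adapting the unstable-module vanishing results to this $q$-analog setting is the hard part, and is exactly why the conjecture remains open beyond the prime case.
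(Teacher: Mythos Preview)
The paper does not prove this statement: it is quoted there only as the Landweber--Stong \emph{Conjecture}, mentioned in a remark as being reminiscent of Question~\ref{Stanley-decomposition-conjecture}, with the prime case $q=p$ attributed to Bourguiba and Zarati and the general case left open. So there is no proof in the paper to compare your proposal against, and your closing sentence---that the conjecture remains open beyond the prime case---is exactly the paper's position. Your write-up is therefore not a proof but an accurate status report plus a reasonable sketch of the known strategy.

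Two small corrections to your sketch. First, your base case is overcomplicated: $S^H$ is a subring of the domain $S$, hence itself a domain, so \emph{every} nonzero homogeneous element (in particular $D_{n,n-1}$) is automatically a nonzerodivisor on $S^H$; no Karagueuzian--Symonds leading-term argument is needed here. The remark you are alluding to in the paper concerns $D_{n,n-1}$ acting as a nonzerodivisor on the \emph{cofixed} module $S_G$, which is not a domain and where the statement is genuinely nontrivial. Second, your inductive framing (``whenever $k<d$ and the first $k$ Dickson invariants form a regular sequence, the next one is a nonzerodivisor on the quotient'') is precisely the heart of the matter, but nothing in your outline supplies the mechanism for it beyond invoking Bourguiba--Zarati; the suggested substitutes for general $q$ (Smith--Wilkerson operations, Galois descent, local cohomology over $S^G$) are plausible directions but, as you yourself note, none is known to close the gap.
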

\end{remark}

\section{Comparing two representations}
\label{CSP-section}

This section reveals the original motivation for our conjectures, analogous to questions on real and complex reflection groups $W$, their {\it parking spaces}, {\it $W$-Catalan numbers}, and {\it Fuss-Catalan} generalizations.  We refer the reader to \cite{ArmstrongRhoadesR, Rhoades} for the full story on this analogy; see also Section~\ref{RCA-question} below.
Roughly speaking, we start by examining two strikingly similar 
$G$-representations, that we will call the 
{\it graded} and {\it ungraded $G$-parking spaces}.  
Parabolic Conjecture~\ref{mod-powers-conjecture} turns out to yield a comparison
of their $P_\alpha$-fixed subspaces.

\subsection{The graded and ungraded $GL_n(\FF_q)$-parking spaces}

\begin{definition} 
For a field $k \supset \FF_q$, 
the {\it graded parking space} for $G=GL_n(\FF_q)$ over $k$ is
$$
Q_k:=k \otimes_{\FF_q} Q 
= k[x_1,\ldots,x_n]/(x_1^{q^m},\ldots,x_n^{q^m}) = S_k /\mm^{[q^m]}
$$
where $S_k:=k[x_1,\ldots,x_n]$ and $\mm:=(x_1,\ldots,x_n)$.
The group $G=GL_n(\FF_q) \subset GL_n(k)$ acts on $S_k$ via linear substitutions, and
also on $Q_k$, just as before.  Thus $Q_k$ is a graded $kG$-module.
\end{definition}

\begin{definition}
For a field $k \supset \FF_q$, the {\it ungraded parking space} 
$$
\ugpark{k} :=\spanof_k \{ e_v: v \in \FF_{q^m}^n \}
$$
for $G$ over $k$ is the $G$-permutation
representation on the points of $\FF_{q^m}^n$ via the embedding
$G=GL_n(\FF_q) \subset GL_n(\FF_{q^m})$,
considered as a $kG$-module.
In other words, the element $g$ in $G=GL_n(\FF_q)$ represented by
a matrix $(g_{ij})$ will send the $k$-basis element $e_v$ 
indexed by $v=(v_1,\ldots,v_n)$ in $\FF_{q^m}^n$ to
$g(e_v) = e_{g(v)}$, where $g(v)_i= \sum_{j=1}^n g_{ij} v_j$.
\end{definition}

\begin{example}
\label{ungraded-parking-space-example}
When $q=3, n=2, m=1$, the ungraded parking space has these nine $k$-basis elements:
$$
\left\{ 
\begin{matrix}
 e_{(-1,+1)}, & e_{(0,+1)}, & e_{(+1,+1)}, \\
 e_{(-1,0)}, & e_{(0,0)}, & e_{(+1,0)}, \\
 e_{(-1,-1)}, & e_{(0,-1)}, & e_{(+1,-1)}
\end{matrix}
\right\}.
$$
For example, $-I_{2 \times 2}$ in $G=GL_2(\FF_3)$ fixes 
$e_{(0,0)}$ and swaps the remaining basis elements as follows:
$$
\begin{array}{rcl}
 e_{(-1,0)} &\leftrightarrow & e_{(+1,0)} \\
 e_{(-1,+1)} &\leftrightarrow & e_{(+1,-1)} \\
 e_{(0,+1)} &\leftrightarrow & e_{(0,-1)} \\
 e_{(+1,+1)} &\leftrightarrow & e_{(-1,-1)}. \\
\end{array}
$$
\end{example}

Note that both $kG$-modules 
$Q_k$ and $\ugpark{k}$ have dimension $(q^m)^n$.
Before investigating their further similarities, we first
note that they are {\it not in general isomorphic} for $n \geq 2$.

\begin{example}
As in Example~\ref{ungraded-parking-space-example}, take $q=3, n=2, m=1$.  One can argue that 
$$
Q_k=k[x_1,x_2]/(x_1^3,x_2^3) \quad \not\cong \quad k[\FF_3^2]
$$
as follows.
The action of $G=GL_2(\FF_3)$ commutes with the action of its 
center $C=\{\pm I_{2 \times 2}\} \cong \ZZ/2\ZZ$.  Thus a $kG$-module 
isomorphism $Q_k\cong k[\FF_3^2]$ would necessarily 
lead to a $k[G \times C]$-module isomorphism, and
hence also $kG$-module isomorphisms between the $C$-isotypic subspaces
$Q_k^-$ and $k[\FF_3^2]^-$
where for $U=Q_k$ or $k[\FF_3^2]$ we define
\[
U^-:= \{ u \in U\text{ such that } -I_{2 \times 2} : u \mapsto -u\}.
\]
It therefore suffices to check that these two isotypic
subspaces are {\it not} $kG$-module isomorphic:
$$
\begin{aligned}
Q_k^{-} &= \{f \in Q_k: f(-x_1,-x_2) = -f(x_1,x_2) \} 
=(Q_k)_1 \oplus (Q_k)_3\\
 &=\spanof_{k}\{\quad x_1,\quad x_2 \quad \} \quad \oplus  \quad 
   \spanof_{k}\{\quad x_1^2 x_2, \quad x_1 x_2^2\quad \},\\
 & \\
k[\FF_3^2]^{-}&=\{ w \in k[\FF_3^2]: -I_{2 \times 2}: w \mapsto -w\} \\
        &=\spanof_{k}\left\{w_1:=e_{(+1,0)}-e_{(-1,0)}, \right. \\
         &  \qquad  \qquad  \quad  w_2:=e_{(0,+1)}-e_{(0,-1)},  \\
          &   \qquad   \qquad \quad w_3:=e_{(+1,+1)}-e_{(-1,-1)}, \\
            &   \qquad \qquad \quad \left.  w_4:=e_{(-1,+1)}-e_{(+1,-1)} \right\}.
\end{aligned}
$$
To argue that $Q_k^{-} \not\cong k[\FF_3^2]^{-}$ as $kG$-modules, check that this
transvection in $G$
$$
u=\left[ 
\begin{matrix}
1 & 1\\
0 & 1\\
\end{matrix}
\right]
$$
acts on both of the $2$-dimensional summands $(Q_k)_1$ and $(Q_k)_3$ 
of $Q_k^{-}$ via $2 \times 2$ Jordan blocks, but it
acts on the $4$-dimensional space $k[\FF_3^2]^{-}$ by fixing $w_1$ and cyclically 
permuting $w_2 \mapsto w_3 \mapsto w_4 \mapsto w_2$. This $3$-cycle action is
 conjugate to a $3 \times 3$ Jordan block when $k$ has characteristic $3$.  
\end{example}

Although $Q_k$ and $\ugpark{k}$ are not {\it isomorphic} as $kG$-modules,
they do turn out to be {\it Brauer isomorphic}.  This Brauer-isomorphism
was essentially observed by Kuhn \cite{Kuhn}; 
see Remark~\ref{Kuhn-remark} below.

\begin{definition}
Recall \cite[Chapter 18]{Serre} that two finite-dimensional representations $U_1, U_2$ of a finite group $G$ over a field $k$ are
said to be {\it Brauer-isomorphic} as $kG$-modules, written $U_1 \approx U_2$,
if each simple $kG$-module has the same 
composition multiplicity in $U_1$ as in $U_2$.
Equivalently, each {\it $p$-regular element} $g$ in $G$ has the same {\it Brauer character values}
$\chi_{U_1}(g) = \chi_{U_2}(g)$.
\end{definition}

\noindent
In fact, when the field extension $k$ of $\FF_q$ actually contains $\FF_{q^m}$,
it is useful to consider an extra cyclic group 
$$
C:=\FF_{q^m}^\times \cong \ZZ/(q^m-1)\ZZ
$$ 
acting on both $Q_k$ and $\ugpark{k}$ in a way that commutes with the $G$-actions.

\begin{definition}[$C$-action on the graded parking space]
\label{graded-C-action}
When $k \supset \FF_{q^m}$, an element $\gamma$ in $C=\FF_{q^m}^\times$
acts on $S_k=k[x_1,\ldots,x_n]$ by the scalar variable substitution
$$
x_i \longmapsto \gamma x_i\text{ for }i=1,2,\ldots,n.
$$
This $C$-action preserves $\mm^{[q^m]}=(x_1^{q^m},\ldots,x_n^{q^m})$, so 
that it descends to a $C$-action
on $Q_k$.  Also this $C$-action commutes with the action of $G$, so that
$Q_k$ becomes a $k[G \times C]$-module.  
\end{definition}

Note that the $C$-action on $Q_k$ depends in a trivial way on the {\it grading} structure
of $Q_k$:  an element $\gamma$ of $C=\FF_{q^m}^\times$ scales
all elements of a fixed degree $d$ in  
$Q_k$ by the same scalar $\gamma^d$.

\begin{definition}[$C$-action on the ungraded parking space]
When $k \supset \FF_{q^m}$,
an element $\gamma$ in $C=\FF_{q^m}^\times$
permutes the elements of $\FF_{q^m}^n$ via diagonal scalings:
$$
v=(v_1,\ldots,v_n) \overset{\gamma}{\longmapsto} (\gamma v_1,\ldots,\gamma v_n).
$$
Again this commutes with the permutation action of $G$ on $\FF_{q^m}^n$,
giving $\ugpark{k}$ the structure of a permutation $k[G \times C]$-module.
\end{definition}

To understand why the $k[G \times C]$-modules
$Q_k$ and $\ugpark{k}$ are Brauer-isomorphic, we introduce a third object:
an ungraded ring $R_k$ that turns out to be a thinly disguised version
of $\ugpark{k}$.

\begin{definition}
Define an ungraded quotient ring $R_k$ of $S_k = k[x_1, \ldots, x_n]$ by
$$
R_k:=S_k/\nn
\quad \text{ where } \quad 
\nn:=(x_1^{q^m}-x_1,\ldots,x_n^{q^m}-x_n).
$$
As $\nn$ is stable under the $G \times C$-action on $S_k$,
the quotient $R_k$ inherits the structure of 
a $k[G \times C]$-module.  
\end{definition}

\begin{proposition}
\label{identifying-function-ring-prop}
When $k \supset \FF_{q^m}$,
one has a $k[G \times C]$-module isomorphism
$
R_k \cong k^{\FF_{q^m}^n}
$,
where $k^{\FF_{q^m}^n}$ is the ring of all $k$-valued functions on the
finite set $\FF_{q^m}^n$ with pointwise addition and multiplication.
In particular, $R_k$ is $k[G \times C]$-module isomorphic to the
contragredient of $\ugpark{k}$, and hence to
 $\ugpark{k}$ itself.
\end{proposition}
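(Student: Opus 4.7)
The plan is to construct an evaluation map $\overline{\mathrm{ev}}\colon R_k \to k^{\FF_{q^m}^n}$, show it is a $k[G\times C]$-equivariant isomorphism of $k$-algebras, and then invoke self-duality of permutation modules to finish. Start with the $k$-algebra homomorphism
\[
\mathrm{ev}\colon S_k \longrightarrow k^{\FF_{q^m}^n}, \qquad f \longmapsto \bigl(v \mapsto f(v)\bigr).
\]
Since every element $a\in \FF_{q^m}$ satisfies $a^{q^m}=a$, each generator $x_i^{q^m}-x_i$ of $\nn$ vanishes at every point of $\FF_{q^m}^n$, so $\mathrm{ev}$ factors through $R_k$ to yield $\overline{\mathrm{ev}}$.

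To check bijectivity, compare dimensions. Reducing monomials modulo $\nn$ using $x_i^{q^m}\equiv x_i$, the images of $\{\xx^a : 0\le a_i \le q^m-1\}$ span $R_k$, so $\dim_k R_k \le (q^m)^n = \dim_k k^{\FF_{q^m}^n}$. Surjectivity of $\overline{\mathrm{ev}}$ comes from Lagrange interpolation: for each $v\in \FF_{q^m}^n$, the polynomial
\[
f_v(\xx) \,:=\, \prod_{i=1}^n \prod_{\substack{a\in \FF_{q^m}\\ a\ne v_i}} \frac{x_i-a}{v_i-a}
\]
evaluates to the delta function at $v$, and these delta functions span $k^{\FF_{q^m}^n}$. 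Thus $\overline{\mathrm{ev}}$ is a $k$-algebra isomorphism, and the spanning set above is in fact a $k$-basis of $R_k$.

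It remains to track the $G\times C$-equivariance and to match the function ring with the permutation module. Identify $k^{\FF_{q^m}^n}$ with the contragredient $\ugpark{k}^*$ via the natural basis pairing $\langle \delta_v, e_w\rangle=\delta_{v,w}$; then the action of $(g,\gamma)\in G\times C$ on $\varphi\in k^{\FF_{q^m}^n}$ is $((g,\gamma)\cdot\varphi)(v) = \varphi((g,\gamma)^{-1}v)$, which matches the action on $R_k$ induced from the substitution/scaling action on $S_k$ once one verifies the identity $(g\cdot f)(v)=f(g^{-1}v)$ from the convention adopted in Section~\ref{intro-section}. Thus $\overline{\mathrm{ev}}$ is a $k[G\times C]$-module isomorphism $R_k\cong \ugpark{k}^*$. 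Finally, the permutation module $\ugpark{k}$ is self-dual via the $k$-linear map sending the dual basis vector $e_v^*$ to $e_v$: this intertwines the two representations since $G\times C$ permutes the bases $\{e_v^*\}$ and $\{e_v\}$ in the same way. Composing gives the desired isomorphism $R_k\cong \ugpark{k}$. The main obstacle is cosmetic rather than substantive: tracking whether the natural map lands in $\ugpark{k}$ or in $\ugpark{k}^*$ requires unwinding several conventions about how $G$ acts on polynomials versus on points, but the self-duality of permutation modules absorbs any such discrepancy.
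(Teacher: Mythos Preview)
Your proof is correct and follows essentially the same approach as the paper: both use the evaluation map $S_k \to k^{\FF_{q^m}^n}$, identify its kernel as $\nn$, and then invoke self-duality of permutation modules. You have simply supplied the details (Lagrange interpolation for surjectivity, the dimension bound, and the equivariance check) that the paper leaves implicit under the phrase ``well-known.''
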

\begin{proof}
The map $S_k \longrightarrow k^{\FF_{q^m}^n}$ that evaluates
a polynomial $f(x_1,\ldots,x_n)$ at the points of $\FF_{q^m}^n$ is
well-known to be a surjective ring homomorphism with kernel $\nn$ when $k \supset \FF_{q^m}$. This proves most of the assertions.
For the last assertion, note that permutation representations
are self-contragredient.
\end{proof}

It will turn out that $S_k$ is closely related to
$R_k$ via a filtration
\begin{equation}
\label{ring-filtration}
F_0 \subset F_1 \subset F_2 \subset \cdots \subset R_k
\end{equation}
where $F_i$ is the image within $R_k$ of
polynomials in $S_k$ of degree at most $i$.
Note that $F_i F_j \subset F_{i+j}$, allowing one to define the 
{\it associated graded ring} 
$$
\gr_F R_k:= F_0 \oplus F_1/F_0 \oplus F_2/F_1 \oplus F_3/F_2 \oplus \cdots
$$
with multiplication $F_i/F_{i-1} \times F_j/F_{j-1} \rightarrow F_{i+j}/F_{i+j-1}$
induced from $F_i \times F_j \rightarrow F_{i+j}$.  

\begin{proposition}
\label{identifying-gr-prop}
When $k \supset \FF_{q^m}$, one has 
a $G \times C$-equivariant isomorphism of graded rings
$
Q_k \cong \gr_F R_k.
$
\end{proposition}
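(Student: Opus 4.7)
The plan is to construct a $G \times C$-equivariant graded ring homomorphism $\phi : Q_k \to \gr_F R_k$ and then verify that it is bijective by a dimension count in each graded piece.

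First I would define $\phi$ degree by degree: for homogeneous $f \in S_d$, let $\phi_d$ send the class of $f$ in $(Q_k)_d = S_d/(\mm^{[q^m]} \cap S_d)$ to the class in $F_d/F_{d-1}$ of the image of $f$ in $R_k$. This is well-defined because any homogeneous $f \in \mm^{[q^m]} \cap S_d$ has the form $f = \sum_i g_i\,x_i^{q^m}$ with $g_i \in S_{d-q^m}$, and modulo $\nn$ this element equals $\sum_i g_i\,x_i$, which is the image of a polynomial of degree $d - q^m + 1 \leq d-1$ (valid since $q^m \geq 2$). Multiplicativity of $\phi$ is automatic: the multiplication on $\gr_F R_k$ is defined so that the product of $\bar f \in F_i/F_{i-1}$ and $\bar g \in F_j/F_{j-1}$ is the class of $\overline{fg}$ in $F_{i+j}/F_{i+j-1}$, which matches the multiplication inherited from $S_k$ in $Q_k$.

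For $G \times C$-equivariance, I would check that $\nn$ is stable under both group actions. For $g \in GL_n(\FF_q)$ with matrix entries $g_{ij} \in \FF_q$, the key identity $g_{ij}^{q^m} = g_{ij}$ yields
\[
g(x_i^{q^m} - x_i) \;=\; \sum_j g_{ij}^{q^m}\,x_j^{q^m} - \sum_j g_{ij}\,x_j \;=\; \sum_j g_{ij}(x_j^{q^m} - x_j) \in \nn.
\]
For $\gamma \in C = \FF_{q^m}^\times$, the identity $\gamma^{q^m} = \gamma$ gives $\gamma(x_i^{q^m} - x_i) = \gamma^{q^m} x_i^{q^m} - \gamma x_i = \gamma(x_i^{q^m} - x_i) \in \nn$. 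Both actions preserve degree on $S_k$, hence preserve $F_\bullet$, and the induced actions on $\gr_F R_k$ correspond under $\phi$ to the actions on $Q_k$ defined in Definition~\ref{graded-C-action}.

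The essential step is bijectivity, which I would establish by showing that $F_d$ has as $k$-basis the set of reduced monomials $\{\xx^a : 0 \leq a_i \leq q^m-1,\ |a| \leq d\}$. The inclusion of this span in $F_d$ is immediate since each such monomial lifts to a polynomial of degree $|a| \leq d$ in $S_k$. Conversely, any element of $F_d$ is the image of some $f \in S_k$ of total degree $\leq d$; reducing each monomial $\xx^b$ of $f$ to reduced form modulo $\nn$ by repeatedly replacing $x_j^{q^m}$ with $x_j$ strictly decreases total degree by $q^m-1 \geq 1$ at each step, so the resulting expansion involves only reduced monomials of total degree $\leq |b| \leq d$. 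Combined with Proposition~\ref{identifying-function-ring-prop}, which ensures the reduced monomials form a basis of $R_k$, this yields $\dim_k F_d/F_{d-1} = \dim_k (Q_k)_d$. Since $\phi_d$ is surjective by construction, equality of dimensions forces $\phi$ to be an isomorphism. The main technical point is the degree-control argument for the monomial reduction; everything else is bookkeeping.
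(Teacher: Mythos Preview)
Your proof is correct and follows essentially the same approach as the paper: construct a natural surjective graded ring homomorphism $Q_k \to \gr_F R_k$ and conclude it is an isomorphism by a dimension count. The paper defines the map via the universal property of the polynomial ring and uses a single global dimension comparison $\dim Q_k = \dim R_k = (q^m)^n$, whereas you work degree by degree and spell out the equivariance and the reduced-monomial basis of each $F_d$ more explicitly; these are cosmetic differences rather than a genuinely different route.
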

\begin{proof}
Consider the $k$-algebra map $\varphi$ defined by
\begin{equation}
\label{map-into-gr}
\begin{array}{rcll}
S_k & \overset{\varphi}{\longrightarrow} & \gr_F R_k & \\
x_i & \longmapsto & \overline{x}_i &\in F_1/F_0.
\end{array}
\end{equation}
We claim $\varphi$ surjects:  $R_k$ is
generated as a $k$-algebra by the images of $x_1,\ldots,x_n$, 
so the multiplication map 
$$
\underbrace{F_1 \times \cdots \times F_1}_{i \text{ factors}} \rightarrow F_i
$$
is surjective, and hence likewise for the induced
multiplication map
$
F_1/F_0 \times \cdots \times F_1/F_0
\rightarrow F_i/F_{i-1}.
$

The relation $x_i^{q^m}=x_i$ that holds in $R_k$
shows that $\overline{x}_i^{q^m}=\overline{x}_i=0$ inside
the $q^m$-graded component $F_{q^m}/F_{q^m-1}$ of $\gr_F R_k$.
Hence the surjection 
$S_k  \overset{\varphi}{\twoheadrightarrow}  \gr_F R_k$ 
has $\mm^{[q^m]}$ in its kernel, and
descends to a surjection 
$Q_k  \overset{\varphi}{\twoheadrightarrow}  \gr_F R_k$.
But all of $Q_k$, $R_k$, $\gr_F R_k$  have dimension
$(q^m)^n$, so $\varphi$ is an isomorphism.
Furthermore, it is easily seen to be $G \times C$-equivariant.
\end{proof}

\begin{corollary}
\label{Brauer-isomorphism-corollary}
When $k \supset \FF_q$, one
has a Brauer-isomorphism of $kG$-modules
$
Q_k 
\approx
\ugpark{k}.
$
Furthemore, when $k \supset \FF_{q^m}$ then
it is a Brauer-isomorphism of $k[G \times C]$-modules.
\end{corollary}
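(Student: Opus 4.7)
The plan is to chain the two previously established identifications with a standard observation about filtrations and Brauer characters. When $k \supset \FF_{q^m}$, Propositions~\ref{identifying-function-ring-prop} and~\ref{identifying-gr-prop} give $k[G\times C]$-module isomorphisms
\[
\ugpark{k} \cong R_k
\qquad\text{and}\qquad
Q_k \cong \gr_F R_k,
\]
so the stronger (second) assertion reduces to showing the Brauer-isomorphism $R_k \approx \gr_F R_k$ as $k[G\times C]$-modules.

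For this I would invoke the standard fact that for any finite-dimensional $kH$-module $M$ equipped with a finite filtration $0 = F_{-1} \subset F_0 \subset \cdots \subset F_N = M$ by $kH$-submodules, one has $M \approx \gr_F M$. The proof is an induction on $N$ using the short exact sequences $0 \to F_{i-1} \to F_i \to F_i/F_{i-1} \to 0$ and the additivity of composition multiplicities (equivalently, Brauer characters) on short exact sequences. Applying this with $H = G \times C$ and the filtration~\eqref{ring-filtration}, I first need to check stability under $G\times C$: this holds because $G$ acts on $S_k$ by degree-preserving linear substitutions and $C$ acts by scaling all variables simultaneously, so each $F_i$ (the image of polynomials of degree at most $i$ in $R_k$) is stable under both. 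This completes the second assertion.

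For the first assertion, where only $k \supset \FF_q$ is assumed and the $C$-action need not be defined over $k$, I would extend scalars to an extension field $k' \supset k$ containing $\FF_{q^m}$ (for instance $k' = \bar k$). Then $k' \otimes_k Q_k \cong Q_{k'}$ and $k' \otimes_k \ugpark{k} \cong \ugpark{k'}$, and the second assertion (already proved) delivers a $k'[G\times C]$-Brauer-isomorphism, hence \emph{a fortiori} a $k'G$-Brauer-isomorphism $Q_{k'} \approx \ugpark{k'}$. Since the Brauer character of a $kG$-module is preserved by scalar extension (being computed from eigenvalues of $p$-regular elements lifted to $\CC$), this descends to $Q_k \approx \ugpark{k}$ as $kG$-modules.

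Every step above is fairly formal given the propositions already in hand; the one ingredient requiring care is the additivity lemma for Brauer characters on filtrations (together with verifying $G\times C$-stability of the filtration), which is the least elementary input but is entirely standard.
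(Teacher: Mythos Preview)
Your proposal is correct and follows essentially the same approach as the paper: both arguments chain the isomorphisms $\ugpark{k}\cong R_k$ and $Q_k\cong \gr_F R_k$ from Propositions~\ref{identifying-function-ring-prop} and~\ref{identifying-gr-prop} with the standard fact that a filtered module is Brauer-isomorphic to its associated graded, and both handle the case $k\supset\FF_q$ by extending scalars to a field containing $\FF_{q^m}$. Your version is slightly more explicit in verifying $G\times C$-stability of the filtration and in spelling out the additivity of Brauer characters on short exact sequences, but the underlying argument is the same.
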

\begin{proof}
One may assume without loss of generality that
$k \supset \FF_{q^m}$, as one has Brauer-isomorphisms between
two $kG$-modules if and only if the same holds after extending scalars
to any field containing $k$.  

Then one has a string of $k[G \times C]$-module Brauer isomorphisms and
isomorphisms
$$
\ugpark{k} 
\cong
R_k  \approx 
\gr_F R_k 
\cong Q_k 
$$
derived, respectively, 
from Proposition~\ref{identifying-function-ring-prop},
from the filtration defining $\gr_F R_k$, and 
from Proposition~\ref{identifying-gr-prop}.
\end{proof}

\begin{remark}
\label{Kuhn-remark}
The Brauer isomorphism of $kG$-modules asserted 
in Corollary~\ref{Brauer-isomorphism-corollary}, ignoring the $C$-action,
is essentially a result of N. Kuhn, as we now explain.

Note that a choice of $\FF_q$-vector space basis for $\FF_{q^m}$ 
identifies $\FF_{q^m}$ with the length $m$ row vectors
$\FF_q^m$, and hence also identifies
$\FF_{q^m}^n$ with the $n \times m$ matrices $\FF_q^{n \times m}$.
Hence the $kG$-module $k[\FF_{q^m}^n]$ is isomorphic to the
the permutation action of $G=GL_n(\FF_q)$ left-multiplying matrices in
$\FF_q^{n \times m}$.  

Kuhn proved \cite[Theorem 1.8]{Kuhn} via similar filtration methods to ours that, for $k=\FF_p$ with $p$ a prime, the quotient
ring $Q_k:=k[x_1,\ldots,x_n]/(x_1^{p^m},\ldots,x_n^{p^m})$ 
has the same composition factors as the permutation representation 
$k[\FF_p^{n \times m}]$ on the space of $n \times m$ matrices.  In fact, he
proves this holds not only as $kG$-modules for $G=GL_n(\FF_p)$, 
but even as modules
over the larger semigroup ring $k[ \Mat_n(\FF_p) ]$ of $n \times n$ matrices,
which still acts by linear substitutions on $Q_{k}$ and
acts by matrix left-multiplication on $k[\FF_p^{n \times m}]$.
\end{remark}

\begin{remark}
The authors thank N. Kuhn for pointing out the following consequence
of Conjecture~\ref{mod-powers-conjecture}.
Since the filtration $F=\{F_i\}$ on the ring $R:=R_k$ defined 
in \eqref{ring-filtration} is $G$-stable, it induces a filtration
$F^G=\{(F_i)^G\}$ on the $G$-fixed subring $R^G$.
One has well-defined injective maps 
$(F_i)^G/(F_{i-1})^G \hookrightarrow F_i/F_{i-1}$,
whose images lie in the subspace $(F_i/F_{i-1})^G$.  Compiling these injections
gives an injective ring homomorphism
\begin{equation}
\label{Kuhns-injection}
\gr_{F^G}(R^G) \hookrightarrow \left( \gr_F R\right)^G.
\end{equation}

\begin{proposition}
\label{Kuhn-conjecture}
The specialization of 
Conjecture~\ref{mod-powers-conjecture} to $t=1$
is equivalent to the injection \eqref{Kuhns-injection}
being an isomorphism.  
\end{proposition}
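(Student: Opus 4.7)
The plan is to reduce the equivalence to a dimension count. The graded injection \eqref{Kuhns-injection} is an isomorphism if and only if its domain and codomain have equal total $k$-dimension. Since $R^G$ is a finite-dimensional filtered $k$-vector space, one has the general identity $\dim_k \gr_{F^G}(R^G) = \dim_k R^G$. On the other hand, Proposition~\ref{identifying-gr-prop} gives a graded $G$-equivariant isomorphism $\gr_F R \cong Q_k$, so $(\gr_F R)^G \cong Q_k^G$; and flatness of the field extension $k/\FF_q$ yields $Q_k^G \cong k \otimes_{\FF_q} Q^G$, hence $\dim_k (\gr_F R)^G = \dim_{\FF_q} Q^G$. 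Therefore \eqref{Kuhns-injection} is an isomorphism if and only if $\dim_{\FF_q} Q^G = \dim_k R^G$.

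Next I would compute $\dim_k R^G$ directly using Proposition~\ref{identifying-function-ring-prop}: the identification $R \cong k^{\FF_{q^m}^n}$ exhibits $R$ as a permutation representation, so $R^G$ consists of functions constant on $G$-orbits, and $\dim_k R^G$ equals the number of $G$-orbits on $\FF_{q^m}^n$ regardless of the characteristic of $k$.  To count these orbits, fix an $\FF_q$-basis of $\FF_{q^m}$, identifying $\FF_{q^m}^n$ with $\Hom_{\FF_q}(\FF_q^m, \FF_q^n)$; then $G = GL_n(\FF_q)$ acts by post-composition, and two such linear maps lie in the same orbit if and only if they have the same kernel $V \subseteq \FF_q^m$ (surjectivity of the orbit map onto the set of possible kernels uses that any two subspaces of $\FF_q^n$ of equal dimension are $GL_n(\FF_q)$-related). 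The kernel $V$ may be any subspace of codimension at most $n$, so grouping by codimension $k$ yields
\[
\dim_k R^G = \sum_{k=0}^{\min(n,m)} \qbin{m}{k}{q}.
\]

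Finally, I would evaluate $C_{n,m}(1)$: each monomial $t^{(n-k)(q^m-q^k)}$ in \eqref{Catalan-with-m-definition} specializes to $1$ at $t=1$, while the standard limit $\lim_{t\to 1}(1-t^a)/(1-t^b) = a/b$ applied to the product formula \eqref{(q,t)-binomial-definition} gives
\[
\qbin{m}{k}{q,t}\Big|_{t=1} = \prod_{i=0}^{k-1}\frac{q^m-q^i}{q^k-q^i} = \qbin{m}{k}{q}.
\]
Hence $C_{n,m}(1) = \sum_{k=0}^{\min(n,m)} \qbin{m}{k}{q} = \dim_k R^G$, and combining with the first paragraph gives: \eqref{Kuhns-injection} is an isomorphism if and only if $\dim_{\FF_q} Q^G = C_{n,m}(1)$, which is the $t=1$ specialization of Conjecture~\ref{mod-powers-conjecture}. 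The most delicate step is the orbit classification, though it reduces to a standard exercise in linear algebra.
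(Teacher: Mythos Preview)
Your proof is correct and follows essentially the same approach as the paper: both reduce the equivalence to a dimension count, identify $(\gr_F R)^G$ with $Q^G$ via Proposition~\ref{identifying-gr-prop}, and match $\dim_k R^G$ with $C_{n,m}(1)$ by counting $G$-orbits on $\FF_{q^m}^n$. The only difference is that the paper cites the forward reference Theorem~\ref{orbits-identified-theorem} for the orbit count (parametrizing orbits by the $\FF_q$-span of the coordinates inside $\FF_{q^m}$), whereas you give a self-contained argument parametrizing orbits by kernels in $\Hom_{\FF_q}(\FF_q^m,\FF_q^n)$; the two parametrizations are dual and yield the same count.
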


\noindent
Thus Conjecture~\ref{mod-powers-conjecture} implies that
the operations of taking 
$G$-fixed points and forming the associated graded ring commute
when applied to the ungraded parking space $R$.

\begin{proof}[Proof of Proposition~\ref{Kuhn-conjecture}]
Proposition~\ref{identifying-gr-prop} shows that
$Q \cong \gr R$, and hence $Q^G \cong (\gr R)^G$.
Thus the specialization of Conjecture~\ref{mod-powers-conjecture} to $t=1$
is equivalent to the assertion that 
$$
\sum_{k=0}^{\min(m,n)} \qbin{m}{k}{q}=  \dim_{\FF_q} Q^G = \dim_{\FF_q} (\gr R)^G. 
$$  
On the other hand, Theorem~\ref{orbits-identified-theorem} below 
shows that the sum on the left equals $\dim_{\FF_q} R^G$, 
and hence also equals $\dim_{\FF_q}  \gr_{F^G}( R^G )$.  Thus Conjecture~\ref{mod-powers-conjecture} at $t=1$ asserts that the source and target
of the injection \eqref{Kuhns-injection} have the same dimension.
\end{proof}

\end{remark}

\subsection{$P_\alpha$-fixed spaces, orbits, 
and Parabolic Conjecture~\ref{mod-powers-conjecture}}

We next compare the $P_\alpha$-fixed spaces in
$Q_k$ and in $\ugpark{k}$.  Since $\ugpark{k}$ is a permutation
representation, one can identify its fixed space as
$$
\ugpark{k}^{P_\alpha} \cong k[ P_\alpha\backslash \FF_{q^m}^n ]
$$
where $P_\alpha\backslash \FF_{q^m}^n$ is the set of $P_\alpha$-orbits
on $\FF_{q^m}^n$.  This orbit set 
$P_\alpha\backslash \FF_{q^m}^n$
turns out to be closely related to the mysterious summation in the definition
\eqref{parabolic-Catalan-with-m-definition} of $C_{\alpha,m}(t)$.

\begin{definition}
Let $\beta=(\beta_1,\ldots,\beta_\ell)$ be a weak composition
having $|\beta| \leq m$, and define its
partial sums $B_i=\beta_1+\beta_2+\cdots+\beta_i$ as usual.
A {\it $(\beta,m-|\beta|)$-flag} in $\FF_{q^m}$ is a tower 
\begin{equation}
\label{typical-flag}
0=V_{B_0} \subset V_{B_1} \subset V_{B_2} \subset \cdots
\subset V_{B_\ell} \subset \FF_{q^m}
\end{equation}
of $\FF_q$-subspaces inside $\FF_{q^m}$
with $\dim_{\FF_q} V_{B_i} = B_i$ for each $i$.  
\end{definition}

Let $Y_\beta$ be the
set of  $(\beta,m-|\beta|)$-flags in $\FF_{q^m}$,
whose cardinality is known to be a {\it $q$-multinomial coefficient}
$$
|Y_\beta|=\qbin{m}{\beta,m-|\beta|}{q}
:=\qbin{m}{\beta,m-|\beta|}{q,t=1}
= \frac{ \prod_{j=0}^{n-1} ( q^n-q^j ) }
       { \prod_{i=1}^\ell \prod_{j=0}^{\beta_i-1} ( q^{B_i}-q^{B_{i-1}+j} ) }.
$$
Given a composition $\alpha$ of $n$, define the set 
$$
X_\alpha
:=\bigsqcup_{\substack{ \beta: 
\beta \leq \alpha,\\ |\beta| \leq m}} Y_{\beta},
$$
which has cardinality given by
$$
|X_\alpha|
=\sum_{\substack{ \beta: 
\beta \leq \alpha,\\ |\beta| \leq m}} |Y_{\beta}|
= \left[ C_{\alpha,m}(t) \right]_{t=1}.
$$

\begin{theorem}
\label{orbits-identified-theorem}
The set $X_\alpha$ naturally indexes $P_\alpha\backslash \FF_{q^m}^n$.
Therefore  
$$
\dim_k \ugpark{k}^{P_\alpha}= | P_\alpha \backslash \FF_{q^m}^n |=\left[ C_{\alpha,m}(t) \right]_{t=1}.
$$
\end{theorem}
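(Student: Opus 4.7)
The plan is to construct an explicit bijection between the orbit set $P_\alpha \backslash \FF_{q^m}^n$ and the indexing set $X_\alpha = \bigsqcup_{\beta \leq \alpha,\, |\beta|\leq m} Y_\beta$; the equality $\dim_k \ugpark{k}^{P_\alpha} = |P_\alpha\backslash \FF_{q^m}^n|$ is automatic since $\ugpark{k}$ is a permutation module whose fixed space has basis given by orbit sums. First I would fix an $\FF_q$-basis of $\FF_{q^m}$ to identify $\FF_{q^m}^n$ with $\Mat_{n \times m}(\FF_q)$; under this identification, the action of $G = GL_n(\FF_q)$ becomes left multiplication of matrices, and $\FF_{q^m}$ becomes an $\FF_q$-vector space of dimension $m$ inside which subspaces of $\FF_q^m$ will play the role of the $V_{B_i}$.

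Given $M \in \Mat_{n \times m}(\FF_q)$, let $V_{A_i}(M) \subseteq \FF_{q^m}$ denote the $\FF_q$-span of the top $A_i$ rows of $M$, producing a chain $0 = V_{A_0}(M) \subseteq V_{A_1}(M) \subseteq \cdots \subseteq V_{A_\ell}(M)$. Setting $B_i := \dim V_{A_i}(M)$ and $\beta_i := B_i - B_{i-1}$, one has $0 \leq \beta_i \leq \alpha_i$ and $|\beta| \leq m$, so this data is a $(\beta, m - |\beta|)$-flag lying in $Y_\beta \subseteq X_\alpha$. The resulting map $\Psi \colon \FF_{q^m}^n \to X_\alpha$ is $P_\alpha$-invariant: for block-upper-triangular $g \in P_\alpha$, the top $A_i$ rows of $gM$ are $\FF_q$-linear combinations of the top $A_i$ rows of $M$, so each $V_{A_i}$ is preserved.

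The main work is to show $\Psi$ descends to a bijection on $P_\alpha$-orbits. Surjectivity is direct: given a target flag with composition $\beta \leq \alpha$, choose a basis of the top subspace adapted to the filtration and assemble $M$ block by block, placing the $\beta_i$ new basis vectors in rows $A_{i-1}+1,\ldots,A_{i-1}+\beta_i$ of block $i$ and zeros in the remaining $\alpha_i-\beta_i$ rows. For injectivity---the main obstacle---the plan is a parabolic row-reduction that produces a canonical form depending only on $\Psi(M)$: proceed block by block, using the diagonal block $g_i \in GL_{\alpha_i}(\FF_q)$ of $P_\alpha$ to realize any change of basis among rows $A_{i-1}+1,\ldots,A_i$, and the strictly-upper-triangular transvections in $P_\alpha$ to subtract arbitrary elements of $V_{B_{i-1}}$ from each such row. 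Once a fixed lift of a basis of $V_{B_i}/V_{B_{i-1}}$ is chosen for every $i$, this reduces $M$ to the normal form built in the surjectivity step, so two matrices with the same $\Psi$-image lie in one $P_\alpha$-orbit.

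The main technical obstacle lies in the inductive step of this row-reduction: after blocks $1,\ldots,i-1$ have been normalized, one must verify that the transvections of $P_\alpha$ connecting block $i$ to the earlier blocks do suffice to clear the $V_{B_{i-1}}$-components of the block-$i$ rows without disturbing the work already done, so that the remaining $GL_{\alpha_i}$-action can freely normalize the quotient. This amounts to essentially standard relative Gaussian elimination compatible with the parabolic structure. Combining these steps yields a bijection $P_\alpha \backslash \FF_{q^m}^n \longleftrightarrow X_\alpha$, giving $|P_\alpha \backslash \FF_{q^m}^n| = \sum_{\beta \leq \alpha,\,|\beta|\leq m} |Y_\beta| = [C_{\alpha,m}(t)]_{t=1}$, as claimed.
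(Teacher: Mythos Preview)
Your proposal is correct and follows essentially the same approach as the paper: associate to each $v\in\FF_{q^m}^n$ the flag of $\FF_q$-spans of its first $A_i$ coordinates, and argue that this is a complete $P_\alpha$-orbit invariant. The paper states the key bijectivity claim (``$P_\alpha v = P_\alpha v'$ if and only if $V_i = V'_i$ for all $i$'') without further argument, whereas you supply the underlying parabolic row-reduction in the matrix model; this is exactly the justification one would give if pressed, and it matches the paper's intent.
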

\begin{proof}
Fix $\alpha=(\alpha_1,\ldots,\alpha_\ell)$ and 
denote its partial sums by
$A_i=\alpha_1+\alpha_2+\cdots+\alpha_i$ as usual.
To any vector 
$v=(v_1,\ldots,v_n)$ in $\FF_{q^m}^n$ one can associate
a flag $(V_i)_{i=1}^\ell$ in $\FF_{q^m}$ defined by
$V_i:=\spanof_{\FF_q}\{ v_1,v_2,\ldots,v_{A_i} \}$.
This gives rise to a weak composition 
$\beta=(\beta_1,\ldots,\beta_\ell)$
with
$$
\beta_i =\dim_{\FF_q} V_i - \dim_{\FF_q} V_{i-1} 
=\dim_{\FF_q} V_i/V_{i-1} \leq \alpha_i,
$$
where the inequality arises because $V_i/V_{i-1}$ is
spanned by the  $\alpha_i$ vectors
$\{ v_{A_{i-1}+1}, v_{A_{i-1}+2}, \ldots, v_{A_{i}}\}$.
Also one has
$$
|\beta|=\dim_{\FF_q} \spanof_{\FF_q}\{ v_1,v_2,\ldots,v_n \} \leq 
\dim_{\FF_q} \FF_{q^m}=m.
$$
Thus the flag $(V_i)_{i=1}^{\ell}$ associated to $v$ lies in $Y_\beta \subset X_\alpha$,
and this flag is a complete invariant
of the $P_\alpha$-orbit of $v$:
one has $P_\alpha v = P_\alpha v'$ 
if and only if $V_i = V'_i$ for $i=1,2,\ldots,\ell$.
This gives  a bijection 
$P_\alpha \backslash \FF_{q^m}^n \rightarrow X_\alpha$.  
\end{proof}

\begin{remark}
When $\alpha=(n)$ so that $P_\alpha=G=GL_n(\FF_q)$, the analysis of the $G$-orbits $G\backslash \FF_{q^m}^n$ just given in Theorem~\ref{orbits-identified-theorem} is closely related to Kuhn's analysis in \cite[\S 5, Cor. 5.3]{Kuhn};  see also Remark~\ref{Kuhn-remark} above.
\end{remark}

Something even more striking is true, regarding the action
of the cyclic group $C=\FF_{q^m}^\times$ 
on the set of flags $X_\alpha$ inside $\FF_{q^m}$.
Fix a multiplicative generator $\gamma$ for 
$C=\langle \gamma \rangle =\FF_{q^m}^\times$, so $\gamma$
has multiplicative order $q^m-1$.  Also fix a primitive
$(q^m-1)$st root of unity $\zeta$ in $\CC^\times$.
For an element $\gamma^d$ in $C$,
denote its fixed subset by
$$
\left( X_\alpha\right) ^{\gamma^d}:=\{ x \in X_\alpha: \gamma^d(x) = x \}.
$$

\begin{proposition}
\label{CSP-proposition}
For any composition $\alpha$ and integer $d$, one has
has 
$$
|\left( X_\alpha\right)^{\gamma^d}|=
\left[ C_{\alpha,m}(t) \right]_{t=\zeta^d}.
$$
In other words, the triple $(X_\alpha, C_{\alpha,m}(t), C)$
exhibits a cyclic sieving phenomenon in the sense of \cite{StantonWhiteR}.
\end{proposition}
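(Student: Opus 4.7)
The plan is to decompose $X_\alpha = \bigsqcup_\beta Y_\beta$ into $C$-stable pieces and to match each $\beta$-summand of $C_{\alpha,m}(\zeta^d)$ with $|Y_\beta^{\gamma^d}|$. Since $\gamma \in C = \FF_{q^m}^\times$ acts on $\FF_{q^m}$ by scalar multiplication, it preserves every $\FF_q$-subspace and in particular preserves dimension profiles of flags; hence each $Y_\beta$ is $C$-stable. Thus it suffices to prove, for every $\beta$ with $\beta\leq\alpha$ and $|\beta|\leq m$,
\[
|Y_\beta^{\gamma^d}| \;=\; \zeta^{d\cdot e(m,\alpha,\beta)} \cdot \left[\qbin{m}{\beta,m-|\beta|}{q,t}\right]_{t=\zeta^d},
\]
and then sum over $\beta$.

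First I would identify the $\gamma^d$-fixed flags. An $\FF_q$-subspace $V \subset \FF_{q^m}$ is fixed under multiplication by $\gamma^d$ iff it is a module over the subfield $\FF_q(\gamma^d) = \FF_{q^s}$, where $s$ is the least positive integer with $(q^m-1) \mid d(q^s-1)$; note $s \mid m$. Consequently $Y_\beta^{\gamma^d}$ is nonempty iff $s\mid B_i$ for every $i$, in which case it is counted by the $q^s$-multinomial $\qbin{m/s}{\beta_1/s,\ldots,\beta_\ell/s,(m-|\beta|)/s}{q^s}$ of flags of $\FF_{q^s}$-subspaces in $\FF_{q^m} \cong \FF_{q^s}^{m/s}$.

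The heart of the argument is the $(q,t)$-multinomial cyclic sieving statement
\[
\left[\qbin{m}{\beta,m-|\beta|}{q,t}\right]_{t=\zeta^d} \;=\; |Y_\beta^{\gamma^d}|,
\]
which in particular must vanish when $s\nmid B_i$ for some $i$. The $\ell = 1$ case is the $(q,t)$-binomial cyclic sieving phenomenon established in \cite{StantonR}. For general $\ell$, I would either extract it directly from \cite{StantonR} or prove it by induction on $\ell$: express a flag as a choice of top subspace $V_{B_\ell}$ in $\FF_{q^m}$ together with a flag in $V_{B_\ell}$, and factor the $(q,t)$-multinomial accordingly so that the two $(q,t)$-binomial CSPs assemble. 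Securing this parabolic CSP is the main obstacle of the plan; everything else is arithmetic.

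It remains to verify that $\zeta^{d\cdot e(m,\alpha,\beta)} = 1$ whenever $Y_\beta^{\gamma^d} \neq \emptyset$. Since $e(m,\alpha,\beta) = \sum_i(\alpha_i-\beta_i)(q^m-q^{B_i})$, it suffices to show $\zeta^{d(q^m-q^{B_i})} = 1$ when $s \mid B_i$. Writing $q^m - q^{B_i} = q^{B_i}(q^{m-B_i}-1)$ and using $\gcd(q, q^m - 1) = 1$, this reduces to $(q^m - 1) \mid d(q^{m-B_i} - 1)$. But $s \mid m - B_i$ forces $(q^s-1) \mid (q^{m-B_i}-1)$, and then the defining divisibility $(q^m-1)\mid d(q^s-1)$ of $s$ yields the claim. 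Summing the verified equality over $\beta$ produces $|X_\alpha^{\gamma^d}| = [C_{\alpha,m}(t)]_{t=\zeta^d}$.
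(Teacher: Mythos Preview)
Your proposal is correct and follows essentially the same approach as the paper's proof: decompose $X_\alpha$ into the $C$-stable pieces $Y_\beta$, invoke the known cyclic sieving for $(q,t)$-multinomials on each $Y_\beta$, and then check that the prefactor $t^{e(m,\alpha,\beta)}$ evaluates to $1$ at $t=\zeta^d$ whenever $Y_\beta^{\gamma^d}\neq\varnothing$ via the subfield $\FF_q(\gamma^d)=\FF_{q^s}$. The only minor difference is that the paper cites the multinomial CSP directly from \cite[Theorem~9.4]{StantonWhiteR}, so you need not reduce to the binomial case or induct on $\ell$.
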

\begin{proof}
It follows from \cite[Theorem 9.4]{StantonWhiteR}
that for a weak composition $\beta$ with $|\beta| \leq m$
and integer $d$, 
one has 
$$
|\left( Y_\beta\right)^{\gamma^d}| = \qbin{m}{\beta,m-|\beta|}{q,t=\zeta^d}.
$$
So by \eqref{parabolic-Catalan-with-m-definition} suffices to show $\left( Y_\beta\right)^{\gamma^d} \neq \varnothing$ implies
that 
$\left[ t^{e(m,\alpha,\beta)} \right]_{t=\zeta^d}=1$.
  
One checks this as follows.   Let $r$ be the multiplicative
order of $\gamma^d$ within $C=\FF_{q^m}^\times$, and of 
$\zeta^d$ within $\CC^\times$.  
One knows that $\FF_q(\gamma^d) =\FF_{q^{\ell}}$
for some divisor $\ell$ of $m$ with the property that $r$ divides $q^{\ell}-1$.
Then any $(\beta,m-|\beta|)$-flag of $\FF_q$-subspaces in $\FF_{q^m}$
stabilized by $\gamma^d$ must actually be a flag
of $\FF_q(\gamma^d)$-subspaces, and hence a flag of 
$\FF_{q^{\ell}}$-subspaces.  Therefore $\ell$ must divide each
partial sum $B_i$ for $i=1,2,\ldots,\ell$.  As $\ell$ also divides $m$,
this means that $\ell$ divides each $m-B_i$, so that
$q^{\ell}-1$ divides each $q^{m-B_i}-1$, and hence
$q^{\ell}-1$ divides each $q^m-q^{B_i}=q^{B_i}(q^{m-B_i}-1)$.
This means that $r$ will also divide each $q^m-q^{B_i}$, so
that $r$ divides $e(m,\alpha,\beta)$, and 
$\left[ t^{e(m,\alpha,\beta)} \right]_{t=\zeta^d}=1$
as desired.
\end{proof}

One can reinterpret Proposition~\ref{CSP-proposition} in the following
fashion.

\begin{reinterp}
Parabolic Conjecture~\ref{mod-powers-conjecture}
implies that for any field $k \supset \FF_{q^m}$, one has
a $kC$-module isomorphism of the $P_\alpha$-fixed spaces
\begin{equation}
\label{CSP-proposition-isomorphism}
Q_k^{P_\alpha} \cong \ugpark{k}^{P_\alpha}.
\end{equation}
\end{reinterp}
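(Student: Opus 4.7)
The plan is to establish that $Q_k^{P_\alpha}$ and $\ugpark{k}^{P_\alpha}$ have equal characters as $kC$-modules, which suffices for isomorphism: since $|C|=q^m-1$ is coprime to the characteristic $p$ of $k$, and since $k\supset \FF_{q^m}$ contains all $(q^m-1)$-st roots of unity, the group algebra $kC$ is split semisimple, so its finite-dimensional modules are classified up to isomorphism by the $k$-valued character $g\mapsto \operatorname{tr}(g\mid V)$ on a generator $\gamma$ of $C$.

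First I would compute the character of $Q_k^{P_\alpha}$ assuming Parabolic Conjecture~\ref{mod-powers-conjecture}. By Definition~\ref{graded-C-action}, $C$ preserves the grading on $Q_k$ and acts on the degree-$d$ component by the scalar $\gamma^d$, and this passes to the subspace $Q_k^{P_\alpha}$, which is $C$-stable since $C$ commutes with $P_\alpha$. Parabolic Conjecture~\ref{mod-powers-conjecture} identifies $\dim_k (Q_k^{P_\alpha})_d$ with the coefficient $c_d$ of $t^d$ in $C_{\alpha,m}(t)$, so
$$
\operatorname{tr}(\gamma^e\mid Q_k^{P_\alpha}) \;=\; \sum_d c_d\,\gamma^{ed} \;=\; C_{\alpha,m}(\gamma^e)\in k.
$$
Next I would compute the character of $\ugpark{k}^{P_\alpha}$: Theorem~\ref{orbits-identified-theorem} identifies $\ugpark{k}^{P_\alpha}$ with the permutation $kC$-module $k[X_\alpha]$, whose trace at $\gamma^e$ is the fixed-point count $|X_\alpha^{\gamma^e}|$, reduced modulo $p$ to an element of $k$.

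The remaining and main step is to show that $C_{\alpha,m}(\gamma^e)\equiv |X_\alpha^{\gamma^e}|\pmod{p}$ in $k$. By Proposition~\ref{CSP-proposition}, the integer $s:=|X_\alpha^{\gamma^e}|$ equals $C_{\alpha,m}(\zeta^e)$ for $\zeta\in\CC$ a primitive $(q^m-1)$-st root of unity. Since $\zeta$ and $\gamma$ are both primitive $(q^m-1)$-st roots of unity, $\zeta^e$ and $\gamma^e$ share a common multiplicative order $r$, which is coprime to $p$. Hence the polynomial $C_{\alpha,m}(t)-s\in\ZZ[t]$ vanishes at the primitive $r$-th root of unity $\zeta^e\in\CC$, so the $r$-th cyclotomic polynomial $\Phi_r(t)$ divides it in $\QQ[t]$; since $\Phi_r$ is monic with integer coefficients, polynomial division yields $C_{\alpha,m}(t)-s=\Phi_r(t)h(t)$ with $h(t)\in\ZZ[t]$. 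Reducing modulo $p$ and evaluating at $\gamma^e\in k$, the factor $\Phi_r(\gamma^e)$ vanishes because $\gamma^e$ is a primitive $r$-th root of unity in $k$ and $r$ is coprime to $p$ (so the reduction $\overline{\Phi_r}\in\FF_p[t]$ still has all primitive $r$-th roots of unity as roots). Hence $C_{\alpha,m}(\gamma^e)=s$ in $k$, which is the needed character equality. This cyclotomic-polynomial step is the one subtle part, translating the characteristic-zero CSP value of Proposition~\ref{CSP-proposition} into the correct positive-characteristic trace; once it is in place, split semisimplicity of $kC$ closes the argument.
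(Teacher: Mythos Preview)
Your argument has a genuine gap: the assertion that finite-dimensional $kC$-modules are classified up to isomorphism by their $k$-valued trace character is false. From the $k$-valued character you can only recover the multiplicity of each simple $S_i$ \emph{modulo} $p$, not as an integer. For a concrete counterexample, take $C$ trivial and $k=\FF_p$: then $k^p$ and $0$ have the same $k$-valued character. More relevantly, for any cyclic $C$ of order $n$ coprime to $p$ and $k$ containing the $n$th roots of unity, the module $S_i^{\,p}$ has identically zero $k$-character, yet is nonzero. So your conclusion ``equal $k$-traces, hence isomorphic'' does not follow; you have only shown the multiplicities agree mod $p$.

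The paper avoids this by computing \emph{Brauer characters}, which are $\CC$-valued: one fixes the embedding $\gamma\mapsto\zeta$ of $C$ into $\CC^\times$ and lifts eigenvalues before summing. Since $kC$ is semisimple, Brauer characters determine modules up to isomorphism (the irreducible Brauer characters are linearly independent over $\CC$, so integer multiplicities are recovered exactly). The Brauer character of $Q_k^{P_\alpha}$ at $\gamma^d$ is then $\bigl[\Hilb(Q_k^{P_\alpha},t)\bigr]_{t=\zeta^d}$, which under the conjecture equals $C_{\alpha,m}(\zeta^d)$; the Brauer character of the permutation module $\ugpark{k}^{P_\alpha}$ at $\gamma^d$ is the fixed-point count $|X_\alpha^{\gamma^d}|$; and Proposition~\ref{CSP-proposition} asserts these are equal \emph{as integers}. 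This closes the argument directly, and your cyclotomic reduction step becomes unnecessary---indeed, that step was precisely re-deriving the mod-$p$ shadow of the Brauer-character equality, which is too weak for what you need.
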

\begin{proof}
Note that $|C|=q^m-1$ is relatively prime to the characteristic of $k \supset \FF_q$,
and hence $kC$ is semisimple.
Thus it suffices to check that 
$Q_k^{P_\alpha}$ and  $\ugpark{k}^{P_\alpha}$
have the same $kC$-module Brauer characters. 
Recall \cite[\S 18.1]{Serre} that to compute these Brauer characters, 
one starts by fixing an embedding of cyclic groups 
$$
\begin{array}{rcll}
C=\FF_{q^ m}^\times=\langle \gamma \rangle & \longrightarrow &  \CC^\times & \\
                            \gamma^d & \longmapsto & \zeta^d&\text{ where }\zeta := e^{\frac{2 \pi i}{q^m-1}}. 
\end{array}
$$
Then whenever an element $\gamma^d$ in $C$ acts in some 
$r$-dimensional $\FF_{q^m} C$-module $U$ with multiset of
eigenvalues $(\gamma^{i_1},\ldots,\gamma^{i_r})$, its Brauer character
value on $U$ is defined to be
$$
\chi_U(\gamma^d) := \zeta^{i_1}+ \cdots +\zeta^{i_r}.
$$

To compute Brauer character values 
on $Q_k^{P_\alpha}$, recall from Definition~\ref{graded-C-action} that the
element $\gamma^d$ in $C$ acting on this graded vector space 
will scale the $e$th homogeneous component by $(\gamma^d)^e$.
Hence 
\begin{equation}
\label{graded-Brauer-character}
\chi_{Q_k^{P_\alpha} }(\gamma^d) =
\left[ \Hilb\left(  Q_k^{P_\alpha}, t \right) \right]_{t=\zeta^d}. 
\end{equation}

To compute the Brauer character values on
$\ugpark{k}^{P_\alpha}$, note that since  $\ugpark{k}$ is a 
permutation representation of $P_\alpha \times C$, 
its $P_\alpha$-fixed space $\ugpark{k}^{P_\alpha}$ is isomorphic to
the permutation representation of $C$ on the set of $P_\alpha$-orbits 
on $P_\alpha \backslash \FF_{q^m}^n$.  Equivalently, by 
Theorem~\ref{orbits-identified-theorem},
this is the permutation representation of $C$ on $X_\alpha$.
For a {\it permutation} representation of a finite group, 
it is easily seen that its Brauer character value for a ($p$-regular) element is its usual ordinary complex character value, that is, its
number of fixed points.  Hence the Brauer character value for $\gamma^d$ when acting on $\ugpark{k}^{P_\alpha}$ is $| \left( X_\alpha\right)^{\gamma^d} |$.
Comparing this value with \eqref{graded-Brauer-character},
and assuming Parabolic Conjecture~\ref{mod-powers-conjecture},
one finds that Proposition
\ref{CSP-proposition} exactly asserts that the two $kC$-modules in
\eqref{CSP-proposition-isomorphism} have the same Brauer characters.
\end{proof}

\section{Further questions and remarks}
\label{questions-and-remark-section}

\subsection{The two limits where $t,q$ go to $1$.}
In  \cite[(1.3)]{StantonR}, it was noted that two different kinds of limits 
applied to the $(q,t)$-binomials yield the same answer after swapping $q$ and $t$, namely
$$
\lim_{ t \rightarrow 1 } \qbin{n}{k}{q,t}
 =\qbin{n}{k}{q} 
 \qquad \text{ and } \qquad
\lim_{ q\rightarrow 1 }  \qbin{n}{k}{q,t^{\frac{1}{q-1}} }
  =\qbin{n}{k}{t}. 
$$
One can similarly apply these two kinds of limits to $C_{n,m}(t)$, giving two somewhat different answers:
\begin{eqnarray}
\label{first-limit}
\lim_{ t \rightarrow 1 } C_{n,m}(t)
  &=&\sum_{k=0}^{\min(n,m)}
       \qbin{m}{k}{q}  \\
\label{second-limit}
 \lim_{ q\rightarrow 1 } C_{n,m}(t^{\frac{1}{q-1}})
  &=&\sum_{k=0}^{\min(n,m)}
         t^{(n-k)(m-k)} \qbin{m}{k}{t}. 
\end{eqnarray}
The limit \eqref{first-limit} can be interpreted, via
Theorem~\ref{orbits-identified-theorem} for $\alpha=(n)$, as
counting $GL_n(\FF_q)$-orbits on $\FF_{q^m}^n$.
When $m \geq n$, it gives the {\it Galois number} $G_n$ 
counting all $\FF_q$-subspaces of $\FF_q^n$ and studied, e.g., 
by Goldman and Rota \cite{GoldmanRota}.   
We have no insightful 
explanation or interpretation for the limit \eqref{second-limit}.

In addition, it is perhaps worth noting two 
further specializations of \eqref{second-limit}:  
setting $m=n$ or $m=n-1$, and then taking the 
limit as $n \rightarrow \infty$, one
obtains the left sides of the two {\it Rogers-Ramanujan identities}:
$$
\sum_{k=0}^\infty \frac{t^{k^2}}{(t;t)_k} 
=\frac{1}{(t;t^5)_\infty(t^4;t^5)_\infty}   
\qquad \text{ and } \qquad
\sum_{k=0}^\infty \frac{t^{k^2+k}}{(t;t)_k}
=\frac{1}{(t^2;t^5)_\infty(t^3;t^5)_\infty}   
$$
where $(x;t)_k:=(1-x)(1-tx)\cdots(1-t^{k-1}x)$ and 
$(x;t)_\infty=\lim_{k \rightarrow \infty} (x;t)_k$.
We have no explanation for this.

\subsection{$G$-fixed divided powers versus $G$-cofixed polynomials}
We reformulate Conjecture~\ref{cofixed-conjecture} slightly.

Setting $V:=\FF_q^n$, one can regard the symmetric algebra
$S=\FF_q[x_1,\ldots,x_n]=\Sym(V^*)$  as a {\it Hopf algebra}, which is graded of {\it finite type}, meaning that each graded piece
$S_d$ is finite-dimensional.  
Then the {\it (restricted) dual} Hopf algebra $D(V)$ 
has as its $d$th graded piece $D(V)_d= S_d^*$, the 
$\FF_q$-dual vector space to $S_d$, and naturally
carries the structure of a {\it divided power algebra} on $V$;  
see, e.g., \cite[\S I.3,I.4]{AkinBuchsbaumWeyman}.
Consequently, Proposition~\ref{fixed-cofixed-duality} implies that 
the $G$-fixed space $D(V)_d^G $ is $\FF_q$-dual to the $G$-cofixed space $(S_d)_G$, so that
$$
\Hilb(D(V)^G, t) = \Hilb(S_G,t).
$$
This means one can regard Conjecture~\ref{cofixed-conjecture} as being about $\Hilb(D(V)^G, t)$ instead.
Since $D(V)^G$ is a subalgebra of the divided power algebra $D(V)$, this suggests the following.

\begin{question}
For $V=\FF_q^n$ and $G=GL_n(\FF_q)$,  
is Conjecture~\ref{cofixed-conjecture} suggesting a predictable or well-behaved ring structure for the $G$-fixed subalgebra $D(V)^G$ of the divided power algebra $D(V)$?
\end{question}

The invariant theory literature  for finite subgroups of $GL(V)$ acting on divided powers $D(V)$ is much less
extensive than the literature for actions on polynomial rings $S=\Sym(V)$,
although one finds a few results in Segal \cite{Segal}.    M. Crabb\footnote{Personal communication, 2013.} informs us that, in work with J. Hubbuck and a student D. Salisbury, 
some results on the structure of $D(V)^G$ were known to them
for $G=GL_2(\FF_p)$ acting on $V=\FF_p^2$ with $p=2,3$. 

\subsection{Homotopy theory}
The paper of Kuhn \cite{Kuhn} mentioned in Section~\ref{CSP-section} is part of a large literature relating modular representations of $GL_n(\FF_q)$ and its action on
$S=\FF_q[x_1,\ldots,x_n]$ to questions about stable splittings in homotopy theory.  In this work, an important role is played by a commuting action on $S$ of the mod $p$ Steenrod algebra;  some references are
Smith \cite[Chapters 10,11]{Smith},  
Carlisle and Kuhn \cite{CarlisleKuhn}, two surveys 
by Wood \cite{Wood1} and \cite[\S 7]{Wood2}, and the papers of
Doty and Walker \cite{DotyWalker1, DotyWalker2, DotyWalker3}.
We have not seen how to use these results in attacking Parabolic Conjectures~\ref{mod-powers-conjecture} and \ref{cofixed-conjecture}.

\subsection{Approaches to Conjecture~\ref{mod-powers-conjecture}}

In approaching Conjecture~\ref{mod-powers-conjecture} we would like an 
explicit $\FF_q$-basis for $Q^G$ where 
$Q=S/ \mm^{[q^m]}$, in degrees suggested by
the $(q,t)$-binomial summands in the formula
\eqref{Catalan-with-m-definition} for $C_{n,m}(t)$.
For example, when $m \geq n$ one can at least make 
a reasonable guess about {\it part} of such a basis that
models the $k=n$ summand in \eqref{Catalan-with-m-definition},
as follows.  It was shown in \cite[(5.6)]{StantonR} that
$$
\qbin{m}{n}{q,t}
 = \sum_{(\lambda,a)} t^{\sum_{i=0}^{n-1}a_i(q^n-q^{n-i})}
$$
where $(\lambda,a)$ ranges over all pairs in which
$\lambda=(\lambda_1,\ldots,\lambda_n)$ satisfies
$m-n \geq \lambda_1 \geq \cdots \geq \lambda_n \geq 0$,
and $a=(a_0,\ldots,a_{n-1})$ is a tuple
of nonnegative integers {\it $q$-compatible} with $\lambda$ in the
sense that 
$a_i \in [\delta_i, \delta_i + q^{\lambda_i})$, where
$
\delta_i := q^{\lambda_{i+1}} +  q^{\lambda_{i+1}+1} + \cdots 
                          + q^{\lambda_i-1}. 
$
Thus one might guess that the images of the monomials 
$\prod_{i=0}^{n-1} D^{a_i}_{n,n-i}$
as one ranges over the same pairs of $(\lambda,a)$ form
part of an $\FF_q$-basis for $Q^G$, and their $\FF_q$-linear independence
has been checked computationally for a few small values of $n, m, q$.

However, one knows that at least {\it some} of the basis elements accounting for
other summands in \eqref{Catalan-with-m-definition} 
are {\it not} sums of products of Dickson polynomials $D_{n,i}$, as the 
natural map $S^G \rightarrow Q^G$ is {\it not} surjective
for $n \geq 2$.  One seems to need recursive constructions, that produce invariants in $n$ variables from invariants in $n-1$ variables, with predictable effects 
on the degrees.  Currently, we lack such constructions.

Non-surjectivity of $S^G \rightarrow Q^G$ appears
in another initially promising approach.
As $\mm^{[q^m]}=(x_1^{q^m}, \ldots,x_n^{q^m})$ is generated by a 
regular sequence on $S$, one has an $S$-free
{\it Koszul resolution} \cite[\S XVI.10]{Lang} 
for $Q=S/\mm^{[q^m]}$:
$$
0 \rightarrow S \otimes_\FF \wedge^n V \rightarrow
\cdots \rightarrow S \otimes_\FF \wedge^2 V \rightarrow
S \otimes_\FF \wedge^1 V \rightarrow
S \rightarrow Q \rightarrow 0.
$$
Taking $G$-fixed spaces gives a {\it complex}, which is generally not
exact when $\FF_q G$ is not semisimple,
but at least contains $Q^G$ at its right end:
\begin{equation}
\label{non-exact-complex}
0 \rightarrow (S \otimes_\FF \wedge^n V)^G \rightarrow
\cdots \rightarrow (S \otimes_\FF \wedge^2V)^G \rightarrow
(S \otimes_\FF \wedge^1 V)^G \rightarrow
S^G \rightarrow Q^G \rightarrow 0.
\end{equation}
A result of Hartmann and Shepler \cite[\S 6.2]{HartmannShepler}
very precisely describes each term
$(S \otimes_\FF \wedge^i V)^G$ in  \eqref{non-exact-complex} as a
free $S^G$-module with explicit $S^G$-basis elements
that are homogeneous with predictable degrees;  this is an
analogue of a classic result on invariant differential forms
for complex reflection groups due to
Solomon \cite{Solomon}.
Thus each term $(S \otimes_\FF \wedge^i V)^G$
has a simple explicit Hilbert series.  However, non-exactness
means that \eqref{non-exact-complex} is not a resolution of $Q^G$, 
so it does not let us directly compute its Hilbert series.

\subsection{Rational Cherednik algebras for $GL_n(\FF_q)$.}
\label{RCA-question}
Section~\ref{CSP-section} alluded to the
considerations that led to Conjecture~\ref{mod-powers-conjecture}, coming
from the theory of real reflection groups $W$.  When $W$ acts
irreducibly on $\RR^n$ and on  
the polynomial algebra $\CC[\xx]=\CC[x_1,\ldots,x_n]$, one
can define its graded {\it $W$-parking space} 
$\CC[\xx]/(\theta_1,\ldots,\theta_n)$, as a quotient by a 
certain homogeneous system of parameters $\theta_1,\ldots,\theta_n$ of degree $h+1$ inside $\CC[\xx]$, 
where $h$ is the {\it Coxeter number} of $W$; see 
\cite{ArmstrongRhoadesR}.  

Replacing $W$ by $G:=GL_n(\FF_q)$, we
think of $h:=q^n-1$ as the {\it Coxeter number}, with $x_i^{q^n}$ playing the role of $\theta_i$, and $Q=S/\mm^{q^n}$ playing the role of the 
graded $G$-parking space.

In the real reflection group theory, the $W$-parking space 
carries the structure of an irreducible finite dimensional representation $L_c(\triv)$ for the {\it rational Cherednik algebra} $H_c(W)$ with parameter value $c=\frac{h+1}{h}$.  Here the $\theta_i$ span the common kernel of the
{\it Dunkl operators} in $H_c(W)$ when acting on $\CC[\xx]=M_c(\triv)$.  In addition, its $W$-fixed space $L_c(\triv)^W$ is a graded subspace whose Hilbert series is the {\it $W$-Catalan polynomial}.  

This explains why we examined the Hilbert series of $Q^G$ in our context.
In fact, rational Cherednik algebras $H_c(G)$ for $G=GL_n(\FF_q)$ and 
their finite dimensional representations $L_c(\triv)$ have been studied by
Balagovi\'c and Chen \cite{BalagovicChen}.  However, their results show that 
the common kernel of the Dunkl operators in $H_c(G)$ acting on $S=\FF_q[\xx]$ 
is {\it not} spanned by $x_1^{q^n},\ldots,x_n^{q^n}$.  In fact, for almost all choices 
of $n$ and the prime power $q=p^r$, 
they show \cite[Theorem 4.10]{BalagovicChen} that it is 
spanned by $x_1^p,\ldots,x_n^p$, independent of the exponent $r$.
  
Can one modify this rational Cherednik theory for $G$ to better fit our setting, and gain insight into $Q^G$?

\appendix

\section{Proof of Proposition~\ref{equality-up-to-q^m-prop}}
\label{proof-of-equality-up-to-q^m-prop-section}

We recall here the statement to be proven.

\vskip.1in
\noindent
{\bf Proposition \ref{equality-up-to-q^m-prop}.}
{\it 
For any $m \geq 0$ and any composition $\alpha$ of $n$, the power
series 
\[
\Hilb(S^{P_\alpha},t)
=
\prod_{i=1}^\ell \prod_{j=0}^{\alpha_i-1}
\frac{1}{1 - t^{q^{A_i}-q^{A_{i-1}+j}}}
\]
is congruent in $\ZZ[[t]]/(t^{q^m})$ to the polynomial
\[
C_{\alpha,m}(t) 
=
\displaystyle \sum_{\substack{\beta: 
\beta \leq \alpha\\ |\beta| \leq m}}
 t^{e(m,\alpha,\beta)}
 \qbin{m}{\beta,m-|\beta|}{q,t}
\qquad \text{ where } \qquad
e(m,\alpha,\beta) =\sum_{i=1}^\ell  (\alpha_i-\beta_i) (q^m-q^{B_i}).
\]
}
\vskip.1in

\noindent
Fix $m \geq 0$.  
Throughout this proof, ``$\equiv$'' denotes
equivalence in $\ZZ[[t]]/(t^{q^m})$.

Given the composition $\alpha=(\alpha_1,\ldots,\alpha_\ell)$,
denote its $i$th partial sum by $A_i=\alpha_1+\alpha_2+\cdots+\alpha_i$
as before.  Adopting the convention that $A_0:=0, A_{\ell+1}:=+\infty$,
define $L$ to be the largest index in $0 \leq L \leq \ell$ for which
$A_L \leq m$, so that $A_{L+1} > m$.  
Part of the relevance of the index $L$ comes from the truncation
to the first $L$ factors in the product formula 
\begin{equation}
\label{parabolic-hilb-final-form}
\Hilb(S^{P_\alpha},t) 
=\prod_{i=1}^\ell \prod_{j=0}^{\alpha_i-1}
\left(
1 - t^{q^{A_i}-q^{A_{i-1}+j}}
\right)^{-1}
\equiv \prod_{i=1}^L \prod_{j=0}^{\alpha_i-1}
\left(
1 - t^{q^{A_i}-q^{A_{i-1}+j}}
\right)^{-1},
\end{equation}
where the last equivalence is justified as follows.
As $q$ is a prime power, one has $q \geq 2$.  Thus for integers $a, b, c$, one has
\begin{equation}
\label{truncation-inequality}
a > b, c \quad \text{ implies } \quad
q^a - q^b - q^c \geq q^a - 2q^{a-1} = (q-2) q^{a-1} \geq 0.
\end{equation}
In particular, 
$q^{A_i}-q^{A_{i-1}+j} \geq q^{A_i-1} \geq q^m$ for all $i \geq L+1$.   Thus, all of the factors in \eqref{parabolic-hilb-final-form} with $i > L$ are equivalent to $1$ modulo $(t^{q^m})$.

We will make frequent use of \eqref{truncation-inequality}; for example, it
helps prove the following
lemma, which shows that most summands of $C_{\alpha,m}(t)$
in \eqref{parabolic-Catalan-with-m-definition}
vanish in $\ZZ[[t]]/(t^{q^m})$.

\begin{lemma}
\label{vanishing-t-power-lemma}
Given $m$ and $\alpha$, with $A_i$ and $L$ defined as above, the 
weak compositions $\beta=(\beta_1,\ldots,\beta_\ell)$ with 
$0 \leq \beta \leq \alpha$ and $|\beta| \leq m$ for which
$e(m,\alpha,\beta) < q^m$ are exactly those of the
following two forms: 
\begin{enumerate}
\item[(i)] either
$\displaystyle
\beta= \widehat{\alpha} :=
\begin{cases}
\alpha & \textrm{ if } L = \ell, \\
(\alpha_1,\ldots,\alpha_L,m-A_L,0,\ldots,0) & \textrm{otherwise},
\end{cases}
$
\item[(ii)] 
or for $k=1,2,\ldots,L$,
\[
\beta = \widehat{\alpha}^{(k)}:=
\begin{cases}
(\alpha_1,\ldots,\alpha_{k-1}, \alpha_k - 1,
    \alpha_{k+1},\ldots,\alpha_\ell) & \textrm{ if } L = \ell, \\
(\alpha_1,\ldots,\alpha_{k-1}, \alpha_k - 1,
    \alpha_{k+1},\ldots,\alpha_L,m-A_L+1,0,\ldots,0) & \textrm{otherwise}.
\end{cases}
\]
\end{enumerate}
In the former case, $e(m,\alpha,\beta)=0$, and in the latter, $e(m,\alpha,\beta)=q^m-q^{A_k-1}$.
\end{lemma}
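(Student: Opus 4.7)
The plan is to analyze the nonnegative expansion
$$e(m,\alpha,\beta) = \sum_{i=1}^\ell a_i \, g_i, \qquad a_i := \alpha_i - \beta_i \geq 0, \qquad g_i := q^m - q^{B_i} \geq 0,$$
and exploit the dichotomy that $g_i = 0$ when $B_i = m$, whereas $g_i \geq (q-1)q^{m-1}$ when $B_i < m$. Call an index $i$ \emph{contributing} if $a_i > 0$ and $B_i < m$. The key estimate, using $q \geq 2$ as in \eqref{truncation-inequality}, is that any two distinct contributing indices would already force $e \geq 2(q-1)q^{m-1} \geq q^m$, and likewise $a_k \geq 2$ at a contributing index $k$ would force $e \geq 2(q-1)q^{m-1} \geq q^m$. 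Hence the hypothesis $e < q^m$ forces at most one contributing index $k$, and at that index $a_k = 1$.

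This dichotomy splits the argument into two cases. In the first, no index is contributing, so $e = 0$ and every active index (meaning $a_i > 0$) satisfies $B_i = m$. Taking $k$ to be the smallest active index, $\beta_j = \alpha_j$ for $j < k$ combined with $B_k = m$ forces $\beta_k = m - A_{k-1}$; monotonicity of $B_i$ together with $|\beta| \leq m$ then forces $\beta_i = 0$ for $i > k$, while $0 \leq \beta_k \leq \alpha_k$ pins down $k = L+1$. This recovers $\widehat{\alpha}$, or $\beta = \alpha$ if $L = \ell$. In the second case, $\beta_k = \alpha_k - 1$ at the unique contributing index $k$. For $i < k$, any active index would satisfy $B_i \leq B_k = A_k - 1 < m$, violating uniqueness; hence $\beta_i = \alpha_i$ for $i < k$, and $A_k - 1 < m$ yields $k \leq L$. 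For $i > k$, every active index must satisfy $B_i = m$; the smallest such index $k'$ (if one exists) then satisfies $\beta_{k'} = m - A_{k'-1} + 1$ with $\beta_i = 0$ for $i > k'$, and the constraint $0 \leq \beta_{k'} \leq \alpha_{k'}$ pins down $k' = L+1$. This produces $\widehat{\alpha}^{(k)}$, the $L = \ell$ and $L < \ell$ formulas unifying at the boundary $m = A_{L+1} - 1$ where the $(L+1)$st coordinate coincides with $\alpha_{L+1}$.

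With $\beta$ so determined, the claimed identities $e(m,\alpha,\widehat{\alpha}) = 0$ and $e(m,\alpha,\widehat{\alpha}^{(k)}) = q^m - q^{A_k - 1}$ drop out of direct substitution, since in the latter only the $i = k$ summand survives (every other summand has either $a_i = 0$ or $B_i = m$). I expect the subtlest bookkeeping to be in the case $L < \ell$, where one must verify that the prescribed value $\beta_{L+1} = m - A_L + 1$ lies in the valid range $[0, \alpha_{L+1}]$ (which follows from $A_L \leq m < A_{L+1}$) and that the degenerate case $m = A_{L+1} - 1$, in which $L+1$ stops being active, is absorbed into case (ii) without double-counting against case (i).
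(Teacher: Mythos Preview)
Your proposal is correct and follows essentially the same approach as the paper: both hinge on the observation that $e < q^m$ forces at most one nonzero summand in $\sum_i (\alpha_i - \beta_i)(q^m - q^{B_i})$, with coefficient $1$, and then split into the cases $e = 0$ versus $e > 0$. Your ``contributing index'' and ``smallest active index'' bookkeeping is a mild repackaging of the paper's device of tracking the minimal $j$ with $B_j = m$, and the degenerate case $m = A_{L+1} - 1$ you flag is exactly where either argument requires the most care (note that in that boundary case your $k'$ can be $L+2$ rather than $L+1$, but $\beta$ still coincides with $\widehat{\alpha}^{(k)}$, so the conclusion stands).
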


\begin{proof}[Proof of Lemma~\ref{vanishing-t-power-lemma}]
Assume $\beta=(\beta_1,\ldots,\beta_\ell)$ has $0 \leq \beta \leq \alpha$,
with $|\beta| \leq m$,  and that $e(m,\alpha,\beta) < q^m$.  As before, let $B_i=\beta_1+\beta_2+\cdots+\beta_i$
for $i=0,1,\ldots,\ell+1,$ with conventions $B_0:=0$ and $B_{\ell+1}:=m$.  
By \eqref{truncation-inequality}, the condition $e(m, \alpha, \beta) < q^m$ implies that at most one summand in $e(m, \alpha, \beta)$ may be nonzero, and if the $i$th summand $(\alpha_i - \beta_i)(q^m - q^{B_i})$ is nonzero then $\alpha_i - \beta_i = 1$.  Choose $j$ minimal so that $0 \leq j \leq \ell + 1$ and $B_j = m$.  We consider two cases, depending on whether or not $e(m, \alpha, \beta) = 0$.
\vskip.1in
\noindent
{\sf Case 1.} $e(m, \alpha, \beta) = 0$.  In this case all summands in $e(m, \alpha, \beta)$ are zero, so $\beta_i = \alpha_i$ for all $i < j$.  If $j = \ell + 1$ then it follows immediately that $\beta = \alpha = \widehat{\alpha}$.  Otherwise, $j \leq \ell$.  Since $B_j = m$ but $A_i = B_i < m$ for $i < j$, we have $j = L$.  Therefore $\beta = (\alpha_1,\ldots,\alpha_L,m-A_L,0,\ldots,0) = \widehat{\alpha}$ in this case.

\vskip.1in
\noindent
{\sf Case 2.} $e(m, \alpha, \beta) > 0$.  In this case there is an index $k$ such that $k < j$ and $\alpha_i - \beta_i = 1$, and for all other $i < j$ we have $\beta_i = \alpha_i$.  If $j = \ell + 1$ then it follows immediately that $\beta = (\alpha_1,\ldots,\alpha_{k-1}, \alpha_k - 1, \alpha_{k+1},\ldots,\alpha_\ell) = \widehat{\alpha}^{(k)}$.  Otherwise, $j \leq \ell$.  Since $B_j = m$ but $A_i \leq B_i + 1 \leq m$ for $i < j$, we have $j = L$.  Therefore $\beta = (\alpha_1,\ldots,\alpha_{k-1}, \alpha_k - 1,
    \alpha_{k+1},\ldots,\alpha_L,m-A_L+1,0,\ldots,0) = \widehat{\alpha}^{(k)}$ in this case.
\end{proof}

Returning to the proof of Proposition~\ref{equality-up-to-q^m-prop},
note that Lemma~\ref{vanishing-t-power-lemma} implies
\begin{equation}
\label{only-summands-left}
C_{\alpha,m}(t) \equiv \qbin{m}{\widehat{\alpha}}{q,t}
+ \sum_{k=1}^L t^{q^m-q^{A_k-1}} \qbin{m}{\widehat{\alpha}^{(k)}}{q,t}.
\end{equation}
We next process the summands on the right. By definition, one has that
\[
t^{q^m-q^{A_k-1}} \qbin{m}{\widehat{\alpha}^{(k)}}{q,t}
=
\left.
t^{q^m-q^{A_k-1}}
\prod_{j=0}^{A_L-1} (1-t^{q^m-q^j})
\middle/
\prod_{i=1}^L \prod_{j=0}^{\widehat{\alpha}^{(k)}_i-1}
(1 - t^{q^{\widehat{A}^{(k)}_i}-q^{\widehat{A}^{(k)}_{i-1}+j}})
\right.
,
\]
where here $\widehat{A}^{(k)}_i := \widehat{\alpha}^{(k)}_1 + \ldots + \widehat{\alpha}^{(k)}_i$ as usual.
We will attempt to simplify the fraction on the right side, 
working mod $(t^{q^m})$.
Note that in its numerator,
only $t^{q^m-q^{A_k-1}}$ survives,
as \eqref{truncation-inequality} implies
$
(q^m-q^{A_k-1})+(q^m-q^j) \geq q^m.
$ 
Meanwhile in its denominator,
only the factors indexed by $i=1,2,\ldots,k$ survive
multiplication by $t^{q^m-q^{A_k-1}}$
when working mod $(t^{q^m})$:
 since $\widehat{A}^{(k)}_i \geq A_k$ for $i \geq k+1$, 
\eqref{truncation-inequality} implies 
$
(q^m-q^{A_k-1})+ 
(q^{\widehat{A}^{(k)}_i} - q^{\widehat{A}^{(k)}_{i-1}+j}) 
\geq 
q^m.
$
Thus one has
$$
\begin{aligned}
t^{q^m-q^{A_k-1}} \qbin{m}{\widehat{\alpha}^{(k)}}{q,t}
&\equiv
t^{q^m-q^{A_k-1}}
\prod_{i=1}^{k} \prod_{j=0}^{\widehat{\alpha}^{(k)}_i-1}
\left(
1 - t^{q^{\widehat{A}^{(k)}_i}-q^{\widehat{A}^{(k)}_{i-1}+j}}
\right)^{-1}
\\
&=
\left(\prod_{i=1}^{k-1} \prod_{j=0}^{\alpha_i-1}
\left(
1 - t^{q^{A_i}-q^{A_{i-1}+j}}
\right)^{-1}
\right)
\cdot
t^{q^m-q^{A_k-1}}
\prod_{j=0}^{\alpha_k-2}
\left(
1 - t^{q^{A_k-1}-q^{A_{k-1}+j}}
\right)^{-1}.
\end{aligned}
$$
Using  \eqref{truncation-inequality}, the last, unparenthesized factor
is equivalent mod $(t^{q^m})$ to
$$
t^{q^m-q^{A_k-1}}
+ \sum_{j=0}^{\alpha_k-2}
t^{q^{m}-q^{A_{k-1}+j}}
=
\sum_{j=0}^{\alpha_k-1}
t^{q^{m}-q^{A_{k-1}+j}}.
$$
Consequently, one has
\begin{equation}
\label{final-form-for-k-summands}
t^{q^m-q^{A_k-1}} \qbin{m}{\widehat{\alpha}^{(k)}}{q,t}
\equiv
\left. 
\left(
\sum_{j=0}^{\alpha_k-1}
t^{q^{m}-q^{A_{k-1}+j}}
\right)
\middle/ 
\prod_{i=1}^{k-1} \prod_{j=0}^{\alpha_i-1}
(1 - t^{q^{A_i}-q^{A_{i-1}+j}})
\right. 
.
\end{equation}
Similarly one finds that
\begin{equation}
\label{first-summand-processing-start}
\qbin{m}{\widehat{\alpha}}{q,t}
=
\left.
\prod_{j=0}^{A_L-1} (1-t^{q^m-q^j})
\middle/ 
\prod_{i=1}^L \prod_{j=0}^{\alpha_i-1}
(1 - t^{q^{A_i}-q^{A_{i-1}+j}})
\right.
.
\end{equation}
The numerator on the right side of \eqref{first-summand-processing-start}
can be rewritten mod $(t^{q^m})$ using
 \eqref{truncation-inequality} as
$$
\prod_{j=0}^{A_L-1} (1-t^{q^m-q^j})
\equiv
1-\sum_{j=0}^{A_L-1} t^{q^m-q^j}
=
1-\sum_{k=1}^L \sum_{j=0}^{\alpha_k-1} t^{q^m-q^{A_{k-1}+j}}.
$$
Comparing this with
\eqref{parabolic-hilb-final-form}  shows that
\begin{equation}
\label{first-summand-final-form}
\begin{aligned}
\qbin{m}{\widehat{\alpha}}{q,t}
&=\Hilb(S^{P_{\alpha}},t)
-\sum_{k=1}^L
\left.
\left(
\sum_{j=0}^{\alpha_k-1} t^{q^m-q^{A_{k-1}+j}}
\right)
\middle/
\prod_{i=1}^L \prod_{j=0}^{\alpha_i-1}
(1 - t^{q^{A_i}-q^{A_{i-1}+j}})
\right.
 \\
&\equiv
\Hilb(S^{P_{\alpha}},t)
-\sum_{k=1}^L
\left.
\left(
\sum_{j=0}^{\alpha_k-1} t^{q^m-q^{A_{k-1}+j}}
\right)
\middle/
\prod_{i=1}^{k-1} \prod_{j=0}^{\alpha_i-1}
(1 - t^{q^{A_i}-q^{A_{i-1}+j}})
\right..
\end{aligned}
\end{equation}
The last equivalence mod $(t^{q^m})$ arises since if 
$i \geq k$ then $A_i \geq A_k$, so 
$
( q^m-q^{A_{k-1}+j} ) + (q^{A_i}-q^{A_{i-1}+j}) 
\geq q^m
$
by \eqref{truncation-inequality}.
Finally, combining
\eqref{only-summands-left}, \eqref{final-form-for-k-summands}, and
\eqref{first-summand-final-form} shows that
$
C_{\alpha,m}(t)
\equiv  
\Hilb(S^{P_{\alpha}},t),
$
as desired.

\section{Proofs in the bivariate case}
\label{n=2-section}

Our goal here is to prove Parabolic Conjectures~\ref{mod-powers-conjecture} 
and~\ref{cofixed-conjecture} for $n=2$.
Their equivalence for $n=2$ was shown in 
Corollary~\ref{n=2-equivalence-corollary}, so we only 
prove Parabolic Conjecture~\ref{cofixed-conjecture}.

The group $G=GL_2(\FF_q)$ has only
two parabolic subgroups $P_\alpha$, namely the whole group $G = P_{(2)}$ itself 
and the Borel subgroup $B = P_{(1,1)}$.  We establish Parabolic Conjecture~\ref{cofixed-conjecture} for these subgroups below in Theorems~\ref{bivariate-Stanley-decomposition} and~\ref{Stanley-decomposition-for-B-with-n=2}, respectively.

We consider the chain of subgroups 
\begin{equation}
\label{chain-of-subgroups}
\begin{array}{ccccccc}
1 &\subset& T &\subset& B& \subset& G\\
& & \Vert & &\Vert& &\Vert \\
& & \left\{
\left[\begin{matrix}
 a &  0\\
 0 & d
\end{matrix}\right]:
a,d \in \FF_q^\times
\right\}
& &
\left\{
\left[\begin{matrix}
 a &  b\\
 0 & d
\end{matrix}\right]:
a, d \in \FF_q^\times, b \in \FF_q
\right\}
& &
\left\{
\left[\begin{matrix}
 a &  b\\
 c & d
\end{matrix}\right]:
ad-bc \in \FF_q^\times
\right\}
\end{array}
\end{equation}
We first recall the known descriptions of the invariant subrings for each of these subgroups, and then prove some preliminary facts about their 
cofixed quotients.  Using this, we complete the analysis first for the 
quotient $S_B$, and finally for the quotient $S_G$.

\subsection{The invariant rings}
Acting on $S=\FF_q[x,y]$, the tower of subgroups \eqref{chain-of-subgroups}
induces a tower of invariant subalgebras
$S\supset S^T \supset S^B \supset S^G$, with the following explicit descriptions.
Abbreviate $X:=x^{q-1}, Y:=y^{q-1}$, and recall from the introduction
that for $n=2$ the two Dickson polynomials $D_{2,0}, D_{2,1}$ are defined by
\begin{equation}
\label{bivariate-Dickson-definition}
\prod_{(c_1,c_2) \in \FF_q^2 } (t+c_1 x+c_2 y) = t^{q^2} + D_{2,1} t^q + D_{2,0} t.
\end{equation}
\begin{proposition}
\label{bivariate-invariants-prop}
For $S=\FF_q[x,y]$ one has
\begin{itemize}
\item[(i)] $S^T=\FF_q[X,Y]$,
\item[(ii)] $S^B=\FF_q[X,D_{2,1}]$, and
\item[(iii)] $S^G=\FF_q[D_{2,0},D_{2,1}]$,
\end{itemize}
with explicit formulas
\begin{align*}
D_{2,1} &= Y^q + X Y^{q-1}  + \cdots +X^{q-1} Y + X^q, \\
D_{2,0} &= X Y^q + X^2 Y^{q-1} + \cdots  + X^q Y 
&= X D_{2,1} - X^{q+1}.
\end{align*}
\end{proposition}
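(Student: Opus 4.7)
The plan is to prove the three parts in order, with part (iii) reducing to the cited Dickson theorem and part (ii) being the substantive new content. For part (i), I would observe that a monomial $x^i y^j$ is fixed by $T$ if and only if $a^i d^j = 1$ for every $a, d \in \FF_q^\times$, which forces $(q-1) \mid i$ and $(q-1) \mid j$; hence the $T$-fixed monomials $\{X^a Y^b\}$ form an $\FF_q$-basis for $S^T$, yielding $S^T = \FF_q[X, Y]$.

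For part (iii), the equality $S^G = \FF_q[D_{2,0}, D_{2,1}]$ is the $n = 2$ case of Dickson's theorem, already cited in the introduction. To verify the explicit formulas, I would evaluate $\prod_{(c_1, c_2) \in \FF_q^2}(t + c_1 x + c_2 y)$ by applying the standard identity $\prod_{c \in \FF_q}(T + cz) = T^q - z^{q-1}T$ twice: first in the inner variable $c_1$ (with $T = t + c_2 y$ and $z = x$), then in the outer variable $c_2$. Collapsing via Frobenius identities in characteristic $p$ produces a polynomial of the form $t^{q^2} + D_{2,1} t^q + D_{2,0} t$ with the claimed expressions in $X$ and $Y$; the relation $D_{2,0} = X D_{2,1} - X^{q+1}$ then follows by termwise comparison of the two explicit formulas.

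For part (ii), the plan is a dimension count. First, $D_{2,1}$ lies in $S^G \subseteq S^B$ by part (iii), and $X = x^{q-1}$ is fixed by every $g \in B$, since any such $g$ acts on $x$ by $g(x) = ax$ for some $a \in \FF_q^\times$, hence on $X$ by $g(X) = a^{q-1} X = X$. Next, by the Kuhn-Mitchell-Mui-Hewett result \eqref{parabolic-invariants-hilbert-series}, $S^B$ is a polynomial algebra with Hilbert series $\frac{1}{(1 - t^{q-1})(1 - t^{q^2-q})}$; in particular $\dim_{\FF_q}(S^B)_{q-1} = 1$ and $\dim_{\FF_q}(S^B)_{q^2-q} = 2$. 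Writing $S^B = \FF_q[u, v]$ with $\deg u = q-1$ and $\deg v = q^2-q$, the generator $u$ must be a scalar multiple of $X$; and the set $\{X^q, D_{2,1}\}$ is a basis for the $2$-dimensional space $(S^B)_{q^2-q}$ because $X^q$ is a pure power of $x$ while $D_{2,1}$ contains the term $Y^q$ involving $y$. Expanding $v = \alpha X^q + \beta D_{2,1}$, the coefficient $\beta$ must be nonzero, else $v$ would lie in $\FF_q[X]$ and force $S^B = \FF_q[u]$ against the Hilbert series; consequently $D_{2,1}$ replaces $v$ as a polynomial generator, giving $S^B = \FF_q[X, D_{2,1}]$.

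The main obstacle I anticipate is part (iii)'s direct calculation of the explicit formulas for $D_{2,1}$ and $D_{2,0}$: it requires careful handling of Frobenius identities in characteristic $p$ and attention to sign conventions that trivialize in characteristic $2$ but must be tracked consistently for all odd $q$.
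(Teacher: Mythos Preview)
Your proposal is correct, and for parts (i) and (iii) it matches the paper: (i) is the same monomial argument the paper calls ``straightforward,'' and (iii) is the same appeal to Dickson's theorem.

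The two genuine differences are worth noting. For part (ii), the paper simply cites Mui/Hewett for the generators, whereas you extract only the Hilbert series and polynomiality from that result and then identify the generators $X, D_{2,1}$ yourself via a dimension count in degrees $q-1$ and $q^2-q$. Your argument is sound and slightly more self-contained, at the cost of still depending on the cited theorem for the hard part (that $S^B$ is polynomial). For the explicit formulas, the paper instead uses Dickson's determinant quotients
\[
D_{2,1}=\frac{xy^{q^2}-x^{q^2}y}{xy^q-x^qy},\qquad
D_{2,0}=\frac{x^qy^{q^2}-x^{q^2}y^q}{xy^q-x^qy},
\]
which factor immediately as $(Y^{q+1}-X^{q+1})/(Y-X)$ and $XY(Y^q-X^q)/(Y-X)$. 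This is cleaner than your iterated product expansion and, more importantly, it sidesteps the sign issue you anticipate: carrying out your computation $\prod_{c_1}\prod_{c_2}(t+c_1x+c_2y)=A^q-B^{q-1}A$ with $A=t^q-Xt$, $B=y^q-Xy$ actually yields $D_{2,1}=-X^q-Y(Y-X)^{q-1}=-(X^q+\cdots+Y^q)$ for odd $q$, a global sign off from the stated formula. This reflects a harmless convention mismatch in the paper between the product definition in the introduction and the determinant expressions here; it does not affect the ring-theoretic content, but the determinant route lands directly on the formulas as stated.
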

\begin{proof}
Assertion (i) is straightforward.  Assertion (ii) follows from the
work of  Mui \cite{Mui} or Hewett \cite{Hewett}.   Assertion (iii) is Dickson's Theorem \cite{Dickson} for $n=2$.  The last two equalities follow from 
Dickson's expressions
\begin{equation}
\label{bivariate-Dicksons-explicitly} 
\begin{aligned}
D_{2,1}&=\left| \begin{matrix} x & y \\ x^{q^2} & y^{q^2} \end{matrix} \right|
\Bigg/
\left| \begin{matrix} x & y \\ x^{q} & y^{q} \end{matrix} \right| 
= \frac{xy^{q^2}-x^{q^2}y}{xy^q-x^qy} 
=Y^q + X Y^{q-1}  + \cdots +X^{q-1} Y + X^q\\
D_{2,0}&=
\left| \begin{matrix} x^q & y^q \\ x^{q^2} & y^{q^2} \end{matrix} \right| 
\Bigg/
\left| \begin{matrix} x & y \\ x^{q} & y^{q} \end{matrix} \right| 
= \frac{x^q y^{q^2}-x^{q^2}y^q}{xy^q-x^qy} 
=X Y^q + X^2 Y^{q-1}  + \cdots +X^q Y
\end{aligned}
\end{equation}
for the $D_{n,i}$ as quotients of determinants.
\end{proof}

\subsection{The cofixed spaces}
The tower of subgroups in \eqref{chain-of-subgroups} induces quotient maps
$
S \twoheadrightarrow 
S_T \twoheadrightarrow
S_B \twoheadrightarrow
S_G.
$
The quotient map $S \twoheadrightarrow S_T$ is easily understood.

\begin{proposition}
\label{T-fixed-quotient-prop}
A monomial $x^i y^j$ in $S$ survives in the 
$T$-cofixed space $S_T$ if and only if $q-1$ divides both $i$ and $j$, that is,
if and only if $x^i y^j=X^{i'} Y^{j'}$ for some $i'$, $j'$.
Furthermore these monomials $\{X^i Y^j\}_{i,j \geq 0}$ form an $\FF_q$-basis for
$S_T$.
\end{proposition}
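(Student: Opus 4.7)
The plan is to argue directly from the eigenspace decomposition of $S$ under $T$. Since $T$ consists of diagonal matrices, each monomial $x^iy^j$ is a simultaneous eigenvector: the element $g=\operatorname{diag}(a,d)\in T$ scales $x^iy^j$ by the character value $\chi_{i,j}(a,d):=a^id^j$. Hence $S$ decomposes as an internal $\FF_q$-direct sum of the one-dimensional $T$-stable subspaces $\FF_q\cdot x^iy^j$.

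Next I would exploit the identity $g(x^iy^j)-x^iy^j=(\chi_{i,j}(g)-1)\,x^iy^j$. The subspace $N$ generated by $\{g(f)-f\}_{g\in T,\,f\in S}$ is the $\FF_q$-span of such scalar multiples of monomials, and for any fixed $(i,j)$ there are two mutually exclusive cases. If $\chi_{i,j}$ is trivial on all of $T$, then $g(x^iy^j)-x^iy^j=0$ for every $g$, so this monomial contributes nothing to $N$. If $\chi_{i,j}$ is nontrivial, I can pick some $g\in T$ with $\chi_{i,j}(g)\neq 1$, in which case a nonzero $\FF_q$-multiple of $x^iy^j$ lies in $N$, forcing $x^iy^j\in N$. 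The character $\chi_{i,j}$ is trivial on $T$ precisely when $a^i=1$ and $d^j=1$ for all $a,d\in\FF_q^\times$ (specializing $d=1$ and then $a=1$), and since $\FF_q^\times$ is cyclic of order $q-1$, this is equivalent to $(q-1)\mid i$ and $(q-1)\mid j$, i.e.\ to $x^iy^j=X^{i'}Y^{j'}$ for some $i',j'\geq 0$.

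Combining these observations, $N$ has $\FF_q$-basis $\{x^iy^j:(q-1)\nmid i \text{ or } (q-1)\nmid j\}$, and the complementary monomials $\{X^{i'}Y^{j'}\}_{i',j'\geq 0}$ descend to an $\FF_q$-basis of $S_T=S/N$, proving both assertions at once. There is no serious obstacle; the only subtlety is that the identity $g(f)-f=(\chi_{i,j}(g)-1)f$ requires $f$ to be a $T$-eigenvector, which is why we work on the monomial basis from the outset rather than on arbitrary elements of $S$.
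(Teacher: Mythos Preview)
Your proof is correct and is essentially the same as the paper's: both compute $t(x^iy^j)-x^iy^j=(c_1^ic_2^j-1)\,x^iy^j$ for $t=\operatorname{diag}(c_1,c_2)$ and observe that the scalar is identically zero iff $q-1$ divides both $i$ and $j$. The paper invokes its earlier Proposition~\ref{proposition: simple basis}(iv) to justify reducing to monomial generators, whereas you rely (implicitly) on the linearity of $f\mapsto g(f)-f$ to the same end; otherwise the arguments are identical.
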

\begin{proof}
Proposition~\ref{proposition: simple basis}(iv) implies that 
$S_T$ is the quotient of $S$ by the $\FF_q$-subspace spanned by
all elements $t(x^i y^j)-x^i y^j$.
A typical element $t$ in $T$ sends $x \mapsto c_1 x$ and $y \mapsto c_2 y$ for
some $c_1,c_2$ in $\FF_q^\times$.  Therefore 
$$
t(x^i y^j)-x^i y^j=(c_1^i c_2^j - 1) x^i y^j.
$$  
If both $i,j$ are divisible by $q-1$ then this will always be zero, and otherwise,
there exist choices of $c_1,c_2$ for which it is a nonzero multiple of $x^i y^j$.
\end{proof}

In understanding the quotients $S_P, S_G$, it helps
to define two $\FF_q$-linear functionals on $S$ that descend to 
one or both of $S_P$, $S_G$.  They are used in the proof of
Corollary~\ref{S_G-torsion-corollary} below to detect certain nonzero products.

\begin{definition}
\label{separating-functionals-definition}
Define two $\FF_q$-linear functionals $S \overset{\mu,\nu}{\longrightarrow} \FF_q$
by setting $\mu(x^i y^j)=\nu(x^i y^j)=0$ unless $q-1$ divides both $i,j$, and
setting
\begin{align*}
\mu(X^i Y^j) &= \begin{cases} 1 & \text{ if }i,j \geq 1,\\
                                                    0 &\text{ if }i=0 \text{ or } j=0\end{cases}\\
\intertext{and}
 \nu(X^i Y^j) &= \begin{cases} 1 &\text{ if }i=0,\\
                                                   0 & \text{ if }i \geq 1.\end{cases}
\end{align*}
\end{definition}
\noindent
In other words, $\mu$ applied to $f(x,y)$ sums the coefficients in $f$
on monomials of the form $X^i Y^j$ that are not pure powers $X^i$ or $Y^j$, while $\nu$ sums the coefficients on the pure $Y$-powers $Y^j$.  It should be clear from their definitions 
and Proposition~\ref{T-fixed-quotient-prop} that both 
$\mu, \nu$ descend to well-defined $\FF_q$-linear functionals on $S_T$.

\begin{proposition}
\label{functionals-that-descend}
One has the following:
\begin{itemize}
\item[(i)] The functional $S \overset{\nu}{\rightarrow} \FF$ 
descends to a well-defined functional on $S_B$.
\item[(ii)] The functional $S \overset{\mu}{\rightarrow} \FF$ 
descends to a well-defined functional on both
$S_B$ and $S_G$.
\end{itemize}
\end{proposition}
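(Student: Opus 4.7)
By Proposition~\ref{proposition: simple basis}(iv), proving that $\nu$ (resp.\ $\mu$) descends to $S_B$ (resp.\ to $S_G$) reduces to verifying that $\nu$ (resp.\ $\mu$) vanishes on each element $g(f)-f$ with $f$ a monomial in $S$ and $g$ ranging over a chosen set of group generators. Proposition~\ref{T-fixed-quotient-prop} already shows that both $\nu$ and $\mu$ descend to the larger quotient $S_T$, so all differences $t(f)-f$ with $t \in T$ are automatically killed, and only generators outside $T$ need attention. I will adjoin the transvection $u : x \mapsto x,\ y \mapsto y+x$ to $T$ in generating $B$, and further adjoin the transposition $s : x \leftrightarrow y$ to generate $G$ (the same list of generators as in Proof~2 of Proposition~\ref{socle-is-invariant-prop}).

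For part (i), one computes
\[
u(x^a y^b) - x^a y^b \;=\; \sum_{k=0}^{b-1}\binom{b}{k}\, x^{a+b-k}\, y^k.
\]
Every summand has $x$-exponent $a + b - k \geq a + 1 \geq 1$ and so is not a pure power of $y$. Since $\nu$ vanishes on every monomial that is not a pure $y$-power, the whole expression is killed by $\nu$. Combined with $T$-invariance, this shows $\nu$ descends to $S_B$.

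For part (ii), the direct transvection calculation is more delicate, since the monomials $x^{a+b-k} y^k$ can genuinely be of the form $X^i Y^j$ with $i,j \geq 1$. My plan is instead to establish the evaluation formula
\[
\mu(f) \;=\; \sum_{(\alpha,\beta) \in \FF_q^2} f(\alpha,\beta) \qquad \text{in } \FF_q,
\]
which reduces by linearity to monomials and follows from the standard identity that $\sum_{\alpha \in \FF_q} \alpha^a$ equals $-1$ when $a$ is a positive multiple of $q-1$ and is $0$ otherwise (with $a = 0$ yielding $q = 0$). The product of two such sums is $1$ precisely when both $a$ and $b$ are positive multiples of $q-1$, matching the definition of $\mu$.

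With the evaluation formula in hand, invariance is transparent. Since $u(f)(\alpha,\beta) = f(\alpha,\beta+\alpha)$ and $s(f)(\alpha,\beta) = f(\beta,\alpha)$, the shear $(\alpha,\beta) \mapsto (\alpha,\beta+\alpha)$ and the swap $(\alpha,\beta) \mapsto (\beta,\alpha)$ are bijections of $\FF_q^2$ that preserve the total sum $\sum_{\FF_q^2} f(\alpha,\beta)$. Hence $\mu(u(f)) = \mu(s(f)) = \mu(f)$, so $\mu(g(f) - f) = 0$ on a generating set for $G$, and $\mu$ descends to $S_G$; a fortiori it descends to $S_B$. The only real point of care is recognizing the evaluation formula for $\mu$; once it is in place, the transvection and transposition invariances become one-line changes of variable rather than binomial identities.
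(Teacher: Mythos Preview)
Your proof is correct. Part~(i) matches the paper's argument essentially verbatim.

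Part~(ii) takes a genuinely different route. The paper expands $u(x^iy^j)-x^iy^j$ as a binomial sum, applies $\mu$ term by term to obtain $\sum_{k}\binom{j}{k}$ over $1\le k\le j-1$ with $q-1\mid k$, and then invokes the root-of-unity identity $\sum_{\beta\in\FF_q^\times}\beta^k\in\{0,-1\}$ to rewrite this as a telescoping sum over $\FF_q$ that vanishes. You instead identify $\mu$ globally as $\mu(f)=\sum_{(\alpha,\beta)\in\FF_q^2}f(\alpha,\beta)$, using the same root-of-unity identity once at the level of monomials; $G$-invariance then follows in one line because any $g\in GL_2(\FF_q)$ permutes $\FF_q^2$. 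Your approach is cleaner and more conceptual, and in fact shows directly that $\mu$ is invariant under all of $G$ without needing to single out generators; the paper's computation is more localized but arrives at the same vanishing via the same underlying identity applied later in the argument.
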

\begin{proof}
The Borel subgroup $B$ is generated by the torus $T$ together with a
transvection 
\begin{equation}
\label{favorite-transvection}
\begin{array}{rll}
x &\overset{u}{\longmapsto} x \\
y &\overset{u}{\longmapsto} x+y,
\end{array}
\end{equation}
while the full general linear group $G$ is generated by $B$ together with
a transposition $\sigma$ that swaps $x, y$.  Hence
by Proposition~\ref{proposition: simple basis}(iv),
it suffices to check that for every monomial $x^i y^j$, 
both $\mu, \nu$ vanish on 
\begin{equation}
\label{transvection-difference}
u(x^i y^j)-x^i y^j
=\sum_{k=0}^{j-1} \binom{j}{k} x^{i+j-k} y^{k}
\end{equation}
and that $\mu$ vanishes on 
\begin{equation}
\label{transposition-difference}
\sigma(x^i y^j)-x^i y^j=x^j y^i-x^i y^j.
\end{equation}

The fact that $\mu$ vanishes on \eqref{transposition-difference}
is clear from the symmetry between $X$ and $Y$ in
its definition.

To see that $\nu$ vanishes on 
\eqref{transvection-difference},
observe that $\nu$ vanishes on
every monomial $x^{i+j-k}y^k$ appearing in the sum,
as $k < j$ means it is never a pure power of $y$ (or $Y$).

To see that $\mu$ vanishes on \eqref{transvection-difference},
we do a calculation.
Applying $\mu$ to the right side gives
\begin{equation}
\label{transvection-functional-vanishing-check}
\sum_{k=0}^{j-1} \binom{j}{k} \mu(x^{i+j-k} y^{k}) 
 =\sum_{\substack{k=1,2,\ldots,j-1\\ q-1 \text{ divides }k}} \binom{j}{k}
\end{equation}
which equals the sum (in $\FF_q$) of the coefficients on the monomials
of the form $x^{\ell(q-1)}$ within the polynomial 
\[
f(x):=\sum_{k=1}^{j-1} \binom{j}{k} x^k
=(x+1)^j - (x^j + 1).
\]
One can then advantageously rewrite 
\eqref{transvection-functional-vanishing-check}
by taking advantage of a root of unity fact:
\[
 \sum_{\beta \in \FF_q^\times} \beta^k 
= \begin{cases} 
q-1 = -1 
  & \text{ if } k= \ell(q-1) \text{ for some }\ell \in \ZZ,\\
0 & \text{ otherwise}.
\end{cases}
\]
Noting also that $f(0)=0$, this lets one rewrite the right side of  \eqref{transvection-functional-vanishing-check} as
\begin{align*}
-\sum_{\beta \in \FF_q^\times} f(\beta)
=-\sum_{\beta \in \FF_q} f(\beta) 
&=-\sum_{\beta \in \FF_q} (\beta+1)^j 
     + \sum_{\beta \in \FF_q} \beta^j 
     + \sum_{\beta \in \FF_q} 1 \\
&=-\sum_{\beta \in \FF_q} \beta^j + \sum_{\beta \in \FF_q} \beta^j + q  = 0. \qedhere
\end{align*}
\end{proof}

The following technical lemma on vanishing and equalities lies 
at the heart of our analysis of $S_B$, $S_G$.
\begin{lemma}
\label{equal-monomomials-in-quotients}
Beyond the vanishing in $S_T$ of monomials except for $\{X^i Y^j\}_{i,j \geq 0}$,
in the further quotient $S_B$ one also has
\begin{enumerate}
\item[(i)] $X^i=0$ for all $i \geq 1$,
\item[(ii)] $X^i Y^j = X^{i'} Y^{j'}$ for all $i,i' \geq 1$ and $1 \leq j,j' \leq q$ if $i+j=i'+j'$.
\end{enumerate}
In the even further quotient $S_G$, one additionally has
\begin{enumerate}
\item[(iii)] $Y^j=0$ for all $j \geq 1$, and
\item[(iv)] $X^i Y^j= X^{i'} Y^{j'}$ for all $i,i',j,j' \geq 1$ with $i+j=i'+j' \leq 2q$.
\end{enumerate}
\end{lemma}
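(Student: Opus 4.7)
The plan is to exploit the generators of the subgroups: $B$ is generated by $T$ together with the single transvection $u\colon x\mapsto x,\ y\mapsto x+y$, while $G$ is generated by $B$ and the transposition $\sigma\colon x\leftrightarrow y$.  By Proposition~\ref{proposition: simple basis}(iv), it therefore suffices to analyze the relations $u(m)-m$ and $\sigma(m)-m$ for monomials $m$, working modulo $N_T$ so that by Proposition~\ref{T-fixed-quotient-prop} only monomials of the form $X^a Y^b$ need be tracked.

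Parts (i) and (iii) will be almost immediate.  For (i), the identity $u(x^a y)-x^a y=x^{a+1}$ shows $x^{a+1}\in N_B$ for all $a\geq 0$, so in particular $X^i=x^{i(q-1)}\equiv 0$ in $S_B$ for $i\geq 1$.  For (iii), since $N_B\subseteq N_G$ this forces $X^j\equiv 0$ in $S_G$ as well, and then $Y^j-X^j=\sigma(X^j)-X^j\in N_G$ gives $Y^j\equiv 0$ in $S_G$.  Part (iv) will follow from (ii) combined with $\sigma$: when $i+j\leq 2q$ one has $\min(i,j)\leq q$, so after possibly applying $\sigma$ in $S_G$ (which yields $X^i Y^j\equiv X^j Y^i$) one may reduce to the case $1\leq j\leq q$ and apply (ii).

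The main obstacle is (ii).  My plan is to apply $u$ to monomials $x^a y^b$ with $a=I(q-1)-c$ and $b=J(q-1)+c$ for varying $c\in\{1,\ldots,q-1\}$ and appropriate $I,J$ that target the desired pair of monomials.  The expansion
\[
u(x^a y^b)-x^a y^b=\sum_{k=1}^{b}\binom{b}{k}\,x^{a+k}\,y^{b-k}\ \in\ N_B
\]
reduces modulo $N_T$ to a relation of the form
\[
\sum_{\ell=0}^{J}\binom{b}{c+\ell(q-1)}\,X^{I+\ell}\,Y^{J-\ell}\ \equiv\ 0\quad\text{in }S_B,
\]
whose $\ell=J$ term is killed by (i).  The technical heart of the proof will be the $\bmod p$ computation of the coefficients $\binom{b}{c+\ell(q-1)}$; I plan to use Lucas's theorem on base-$p$ digit expansions together with the identity $(1+z)^{q-1}\equiv\sum_{k=0}^{q-1}(-1)^k z^k\pmod p$ in $\FF_q[z]$, equivalently $\binom{q-1}{k}\equiv(-1)^k\pmod p$.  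For $q=p$ prime one can typically arrange that only the coefficients at $\ell=0$ and $\ell=1$ survive, with opposite signs, yielding the direct chain transition $X^I Y^J\equiv X^{I+1}Y^{J-1}$; for general $q=p^r$ the Lucas calculation is more delicate, and the strategy is to vary $c$ over $\{1,\ldots,q-1\}$ to produce enough independent relations so that, taken together, they transitively collapse all $X^i Y^j$ of fixed sum $i+j$ (with $i\geq 1$ and $1\leq j\leq q$) into a single equivalence class in $S_B$.
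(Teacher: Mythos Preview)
Your treatment of (i), (iii), and (iv) coincides with the paper's.  The divergence is in (ii).

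The paper makes one specific choice rather than varying $c$: for each $j$ with $2\le j\le q$ it applies $u$ to $x^{d-(jq-1)}y^{jq-1}$ with $d=(i+j)(q-1)$ (in your parametrization this is $c=j-1$).  The surviving relation expresses $\binom{jq-1}{j(q-1)}X^iY^j$ as an $\FF_q$-combination of monomials $X^{i'}Y^{j'}$ with $j'<j$, and non-vanishing of $\binom{jq-1}{j(q-1)}$ in $\FF_q$ is checked by the elementary observation that for $1\le a\le q-1$ the integers $jq-a$ and $a$ have equal $p$-adic valuation.  Inductively this shows the $X^iY^j$ with $1\le j\le q$ and fixed $i+j$ are pairwise scalar multiples of one another.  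The paper then upgrades ``scalar multiple'' to ``equal'' by invoking the functional $\mu$ of Definition~\ref{separating-functionals-definition}, which descends to $S_B$ and takes the value $1$ on every $X^iY^j$ with $i,j\ge 1$: if $X^iY^j=c\cdot X^{i'}Y^{j'}$ in $S_B$ then applying $\mu$ forces $c=1$.

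This last step is exactly what is missing in your proposal.  ``Collapsing into a single equivalence class'' via enough linear relations only shows the monomials span a space of dimension at most one; it does not give equality.  Your prime-$q$ sketch does aim for the stronger conclusion $X^IY^J=X^{I+1}Y^{J-1}$ directly, but that requires both that $\binom{b}{c+\ell(q-1)}$ vanish for all $\ell\ge 2$ \emph{and} that the two surviving coefficients be negatives of each other; neither is automatic, and as you yourself note the situation for general $q=p^r$ is worse.  Without either that direct scalar control or a device like $\mu$, the argument for (ii) is incomplete.  The paper's combination of the single exponent $b=jq-1$ with the $\mu$ trick sidesteps having to track the scalars at all.
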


\begin{proof}
For (i), since $B$ contains the transvection $u$ from
\eqref{favorite-transvection}, one has in $S_B$ for any $k > 0$ that
$$
0 \equiv u(x^{k-1} y)-x^{k-1}y=
x^{k-1}(x+y) - x^{k-1}y =
x^k.
$$
Hence $X^i=x^{i(q-1)}$ vanishes in $S_B$ for all $i > 0$.

For (ii), we claim that it suffices to show that 
whenever $i,j \geq 1$ and $2 \leq j \leq q$,
one can express $X^i Y^j$ as a sum of $X^{i'} Y^{j'}$ having $i+j=i'+j'$ and
$j' < j$:  then all such monomials $X^i Y^j$ will be scalar multiples
of each other, but they all take the same value $1$ when one applies the
functional $\mu$ from Definition~\ref{separating-functionals-definition}
and Proposition~\ref{functionals-that-descend}, so they must all be equal.  

To this end, let $d:=(i+j)(q-1)=\deg(X^i Y^j)$.
Using the transvection $u$ from \eqref{favorite-transvection},
and taking advantage of the
vanishing of $x^i y^j$ in $S_B$ unless $q-1$ divides $i,j$, one has
\begin{align*}
0 &\equiv u(x^{d-(jq-1)} y^{jq-1})
    -x^{d-(jq-1)} y^{jq-1} \\
&=x^{d-(jq-1)} (x+y)^{jq-1}
    -x^{d-(jq-1)} y^{jq-1}\\
&=\left( \sum_{k=0}^{jq-1} \binom{jq-1}{k} x^{d-k} y^k \right) 
    -x^{d-(jq-1)} y^{jq-1}\\
&\equiv \binom{jq-1}{j(q-1)} x^{i(q-1)} y^{j(q-1)} 
        + \sum_{m=0}^{j-1} \binom{jq-1}{m(q-1)} x^{(i+j-m)(q-1)} y^{m(q-1)} \\
&= \binom{jq-1}{j(q-1)} X^i Y^j
        + \sum_{m=0}^{j-1} \binom{jq-1}{m(q-1)} X^{i+j-m} Y^m.
\end{align*}

Thus it remains only to show that 
$\binom{jq-1}{j(q-1)} \neq 0$ in $\FF_q$ when $1 \leq j \leq q$.  Letting
$q=p^s$ for some prime $p$ and exponent $s \geq 1$, 
we have
\begin{equation}\label{lucas equation}
\binom{jq - 1}{j(q - 1)} = \frac{(jq - 1)(jq - 2) \cdots (jq - j + 1)}{1\cdot 2 \cdots (j - 1)}.
\end{equation}
For any integers $a, b$ such that $1 \leq a \leq p^s - 1$ and $b \geq 1$, the largest power of $p$ dividing $b\cdot p^s - a$ is equal to the largest power of $p$ dividing $a$.  Since $j \leq q$, it follows that the largest power of $p$ dividing the numerator of the right side of \eqref{lucas equation} is equal to the largest power of $p$ dividing the denominator, so $\binom{jq - 1}{j(q - 1)} \neq 0$ in $\FF_q$.

For (iii), note that since (i) implies $X^i$ vanishes in $S_B$, 
the same vanishing holds in the further quotient $S_G$.
But then $Y^i$ also vanishes in $S_G$ by applying the transposition $\sigma$
in $G$ swapping $x, y$.

For (iv), note that (ii) shows that, fixing $d:=i+j$,
all monomials $X^i Y^j$ with $i,j \geq 1$ and $j \leq q$ are equal in $S_B$,
and hence also equal in the further quotient $S_G$.  Applying
the transposition $\sigma$ as before, one concludes
that these monomials are also all equal to the monomials
$X^i Y^j$ with $i,j \geq 1$ and $i \leq q$.  But when $d=i+j \leq 2q$ 
these two sets of monomials exhaust all of the possibilities
for $X^i Y^j$ with $i,j \geq 1$.
\end{proof}

The following corollary will turn out to be a crucial 
part of the structure of $S_G$ as an $S^G$-module in the bivariate case, 
used in the proof of 
Theorem~\ref{bivariate-Stanley-decomposition} below.  

\begin{corollary}
\label{S_G-torsion-corollary}
In the $G$-fixed quotient space $S_G$, the images of the monomials
\begin{equation}
\label{nonfree-G-cofixed-basis-elements}
\{1, \,\, XY, \,\, X^2Y, \,\, \ldots, \,\, X^{q-2}Y\}
\end{equation}
are all annihilated by $D_{2,0}$, but
none of them is annihilated by any power of $D_{2,1}$.
\end{corollary}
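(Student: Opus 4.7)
The plan is to handle the two assertions of the corollary separately. Part~1 (annihilation by $D_{2,0}$) is a direct computation using Lemma~\ref{equal-monomomials-in-quotients}(iv); Part~2 (non-annihilation by any power of $D_{2,1}$) is a direct computation using the functional $\mu$ of Proposition~\ref{functionals-that-descend}(ii), which descends to $S_G$.

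For Part~1, I would invoke the explicit form $D_{2,0} = \sum_{i=1}^{q} X^i Y^{q+1-i}$ from Proposition~\ref{bivariate-invariants-prop}.  Multiplying by $m = X^k Y$ with $1 \leq k \leq q-2$ gives
\[
D_{2,0} \cdot X^k Y = \sum_{i=1}^{q} X^{i+k} Y^{q+2-i},
\]
a sum of exactly $q$ monomials $X^a Y^b$ with $a,b \geq 1$ and common total $a+b = k+q+2$; the cutoff $k \leq q-2$ is precisely the bound ensuring $a+b \leq 2q$, so Lemma~\ref{equal-monomomials-in-quotients}(iv) collapses all $q$ terms to a single element of $S_G$, and since $q \equiv 0 \pmod{p}$ the sum vanishes.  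For $m = 1$ the identical argument applies with $a+b = q+1 \leq 2q$.

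For Part~2, I would expand multinomially
\[
D_{2,1}^N = \sum_{j=0}^{Nq} c_j X^j Y^{Nq-j}, \qquad c_j := \bigl|\{(i_1, \ldots, i_N) \in \{0, 1, \ldots, q\}^N : i_1 + \cdots + i_N = j\}\bigr|,
\]
so that $\sum_j c_j = (q+1)^N$ and $c_0 = c_{Nq} = 1$.  For $m = X^k Y$ with $1 \leq k \leq q-2$, the product $D_{2,1}^N \cdot X^k Y = \sum_j c_j X^{j+k} Y^{Nq-j+1}$ has every monomial with both exponents at least $1$, so $\mu(D_{2,1}^N \cdot X^k Y) = \sum_j c_j = (q+1)^N \equiv 1 \pmod{p}$, which is nonzero in $\FF_q$.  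For $m = 1$ the extreme terms $j=0$ and $j=Nq$ give pure powers $Y^{Nq}$ and $X^{Nq}$, on which $\mu$ vanishes, yielding $\mu(D_{2,1}^N) = (q+1)^N - 2 \equiv -1 \pmod{p}$, again nonzero.  Either way $\mu$ certifies that $D_{2,1}^N \cdot m \neq 0$ in $S_G$.

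The main (mild) obstacle is really just the bookkeeping in Part~1 to confirm that $a+b \leq 2q$: this is exactly what singles out $M_1$, and the monomial $X^{q-1}Y$ is excluded for good reason, since $D_{2,0} \cdot X^{q-1}Y$ would fall outside the range of Lemma~\ref{equal-monomomials-in-quotients}(iv).  No refined filtration or nonzerodivisor argument is required for Part~2, because the functional $\mu$ is coarse enough that the only inputs needed are the characteristic-$p$ collapses $q \equiv 0$ and $(q+1)^N \equiv 1 \pmod{p}$.
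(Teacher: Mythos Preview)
Your proposal is correct and follows essentially the same approach as the paper's own proof: both parts use the explicit formula for $D_{2,0}$ together with Lemma~\ref{equal-monomomials-in-quotients}(iv) to collapse $q$ equal terms to zero, and both use the functional $\mu$ of Proposition~\ref{functionals-that-descend}(ii) on the multinomial expansion of $D_{2,1}^N$ to obtain the nonzero values $(q+1)^N - 2 \equiv -1$ (for $m=1$) and $(q+1)^N \equiv 1$ (for $m = X^kY$). Your bookkeeping via the bound $a+b \leq 2q$ is exactly the paper's degree bound $\deg(D_{2,0} \cdot M) \leq 2q(q-1)$ rephrased in $X,Y$-degree.
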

\begin{proof}
Proposition~\ref{bivariate-invariants-prop} shows that
$D_{2,0}$ is a sum of $q$ monomials of the form
$X^i Y^j$ with $i,j \geq 1$.  The same is true for the product
$D_{2,0} \cdot M$ where $M$ is any of the monomials in
\eqref{nonfree-G-cofixed-basis-elements}.  Since these monomials
$M$ have degree at most $(q-1)^2$, the product 
$D_{2,0} \cdot M$ has degree at most $q^2-1+(q-1)^2=2q(q-1)$,
and hence all $q$ of the monomials in the product are equal to
the same monomial $M'$ by Lemma~\ref{equal-monomomials-in-quotients}(iv).
Therefore $D_{2,0} M \equiv qM'=0$ in $S_G$, as desired.

Proposition~\ref{bivariate-invariants-prop} 
shows that $D_{2,1}=Y^q+XY^{q-1}+\cdots+X^{q-1}Y+X^q$,
a sum of $q+1$ monomials. Hence for $j \geq 0$, 
the power $D_{2,1}^j$ is a sum of $(q+1)^j$ monomials, of the form
$$
D_{2,1}^j = Y^{qj} + \left( \sum_{i,j \geq 1} c_{i,j} X^i Y^j \right) + X^{qj} 
$$
with $\sum_{i,j \geq 1} c_{i,j} = (q+1)^j-2$.
Thus the $\FF_q$-linear functional $\mu$ 
from Definition~\ref{separating-functionals-definition} 
and Proposition~\ref{functionals-that-descend}
has
$$
\mu(D_{2,1}^j \cdot 1) = \mu(D_{2,1}^j)=(q+1)^j-2 = 1^j-2=-1 \neq 0,
$$
while for any of the rest of the monomials $M=X^iY$ with $i \geq 1$ 
in \eqref{nonfree-G-cofixed-basis-elements}, it has
$$
\mu(D_{2,1} \cdot M) = (q+1)^j = 1^j =1 \neq 0.
$$
Thus no power $D_{2,1}^j$ annihilates any of the monomials in 
\eqref{nonfree-G-cofixed-basis-elements} within $S_G$.
\end{proof}

\subsection{Analyzing the fixed quotient $S_B$ for the
Borel subgroup $B=P_{(1,1)}$}

One can regard the polynomial algebra $S$ with its $B$-action as
a module for the group algebra $S^B[B]$ having coefficients in the
$B$-invariant subalgebra $S^B=\FF_q[D_{2,1},X]$.  
We begin by describing 
the $S^B[B]$-module structure on $S$, 
and thereby deduce the $S^B$-module structure on the $B$-cofixed space $S_B$.
For this purpose, we borrow an idea from 
Karagueuzian and Symonds \cite[\S 2.1]{KaragueuzianSymonds1}.

\begin{definition}
\label{hat-S-definition}
Let $\widehat{S}$ be the $\FF_q$-subspace of $S$ spanned by the monomials 
$\{x^i y^j: 
0 \leq j \leq q^2-q\}$.
\end{definition}

\noindent
It is easily seen that $\widehat{S}$ is stable under the action of
$B$, and also under multiplication by $x$ and so by its $B$-invariant power
$X=x^{q-1}$, so that $\widehat{S}$ becomes an $\FF_q[X][B]$-module.
Thus the tensor product 
$$
\FF_q[D_{2,1}] \otimes_{\FF_q} \widehat{S}
$$
is naturally a module for the ring
$$
\FF_q[D_{2,1}] \otimes_{\FF_q} \FF_q[X][B] \quad \cong \quad \FF_q[D_{2,1},X][B]=S^B[B]
$$
via the tensor product action 
$$
(a \otimes c)(b \otimes d)= ab \otimes cd
$$ 
for any elements 
\[
a,b \in \FF_q[D_{2,1}], \qquad
c \in  \FF_q[X][B], \quad \text{ and } \quad
d\in \widehat{S}.
\]
\begin{proposition}[{cf. \cite[Lemma 2.5]{KaragueuzianSymonds1}}]
\label{Karagueuzian-Symonds-prop}
The multiplication map
$$
\begin{array}{rcccl}
\FF_q[D_{2,1}] &\otimes_{\FF_q}& \widehat{S} & \longrightarrow & S  \\
f_1 &\otimes& f_2 & \longmapsto & f_1 f_2
\end{array}
$$
induces an $S^B[B]$-module isomorphism.
Hence as a module over  $S^B=\FF_q[D_{2,1},X]$, 
one has an isomorphism
$$
\FF_q[D_{2,1}] \otimes_{\FF_q} \widehat{S}_B \cong S_B.
$$
\end{proposition}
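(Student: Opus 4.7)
The plan is to exhibit $D_{2,1}$ as a monic polynomial in $y$ of degree $q^2-q$ over $\FF_q[x]$ and then invoke the division algorithm. From Proposition~\ref{bivariate-invariants-prop} one reads $D_{2,1} = y^{q^2-q} + x^{q-1} y^{(q-1)^2} + \cdots + x^{q(q-1)}$, so viewed in $\FF_q[x][y]$ the leading $y$-term is $y^{q^2-q}$ with coefficient $1$.

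My first step would be to establish $\FF_q$-vector space bijectivity of the multiplication map.  Writing $S = \FF_q[x][y]$, the division algorithm says every $f \in S$ can be uniquely written as $f = g\, D_{2,1} + r$ with $r$ of $y$-degree strictly less than $q^2-q$, that is, $r \in \widehat{S}$.  Iterating the same division on $g$ terminates (each step strictly reduces total $y$-degree) and yields a unique expansion $f = \sum_{k \geq 0} D_{2,1}^k \, h_k$ with $h_k \in \widehat{S}$, almost all zero.  This is exactly the statement that the $\FF_q$-linear multiplication map $\FF_q[D_{2,1}] \otimes_{\FF_q} \widehat{S} \to S$ is both injective and surjective.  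Next I would check $S^B[B]$-equivariance.  On the tensor product, $\FF_q[D_{2,1}]$ multiplies the first factor and $\FF_q[X][B]$ acts on the second; on $S$ both actions reduce to multiplication (resp.\ group action) in $S$, and compatibility is immediate from commutativity of $S$ plus the $B$-invariance of $D_{2,1}$.  The only substantive point is that $\widehat{S}$ is itself $B$-stable and $\FF_q[X]$-stable: a generic Borel element $\left[\begin{smallmatrix} a & b \\ 0 & d \end{smallmatrix}\right]$ sends $x^i y^j$ to $(ax)^i(bx+dy)^j$, whose monomials have $y$-degree at most $j$, and multiplication by $X=x^{q-1}$ does not change $y$-degree at all.

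The cofixed consequence then follows formally.  Because $B$ acts trivially on the tensor factor $\FF_q[D_{2,1}] \subset S^B$, the elementary fact that $(A \otimes_{\FF_q} M)_B \cong A \otimes_{\FF_q} M_B$ whenever $A$ is a trivial $\FF_q B$-module (the span of $a \otimes (g(m)-m)$ is exactly $A \otimes N$) yields
\[
S_B \;\cong\; \FF_q[D_{2,1}] \otimes_{\FF_q} \widehat{S}_B
\]
as modules over $\FF_q[D_{2,1}, X] = S^B$.  There is really no obstacle in this argument; the one point requiring attention is that the Borel and multiplication by $X$ preserve the $y$-degree filtration defining $\widehat{S}$, and both follow immediately from the upper-triangular shape of $B$ together with the fact that $X$ involves only the variable $x$.
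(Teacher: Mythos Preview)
Your argument is correct and is essentially the paper's own approach: both rest on the single observation that $D_{2,1}$ is monic of $y$-degree $q^2-q$ over $\FF_q[x]$, with you phrasing the consequence as the division algorithm in $\FF_q[x][y]$ and the paper phrasing it as the graded direct sum decomposition $D_{2,1}\,S_d \oplus \widehat{S}_{d+q^2-q} = S_{d+q^2-q}$ obtained from leading $y$-monomials. One small caution: the paper's Definition~\ref{hat-S-definition} writes $0\le j\le q^2-q$, but its proof (and the dimension count) uses $j<q^2-q$; your ``$\deg_y r < q^2-q$, that is, $r\in\widehat S$'' matches the latter, which is the intended reading.
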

\begin{proof}
The multiplication map is easily seen to be a morphism of
$S^B[B]$-modules, so it remains only to check that it is 
an $\FF_q$-vector space isomorphism.  This follows by iterating 
a direct sum decomposition 
\begin{equation}
\label{Karagueuzian-Symonds-style-direct-sum}
D_{2,1} S_d \quad \oplus \quad \widehat{S}_{d+q^2-q} = S_{d+q^2-q} 
\end{equation}
justified for $d \geq 0$ as follows.  
The leftmost summand $D_{2,1} S_d$ in
\eqref{Karagueuzian-Symonds-style-direct-sum}
has as $\FF_q$-basis the set
$\{D_{2,1} x^i y^j\}_{i+j=d}$. Since
\eqref{bivariate-Dicksons-explicitly} shows that
$
D_{2,1}=y^{q^2-q} + x^{q-1} y^{q^2-2q+1} + \cdots + x^{q^2-q},
$ 
the leading monomials in $y$-degree for
$D_{2,1} S_d$ are 
$$
\{ x^i y^{j'}: i+j'=d+q^2-q \text{ and } j' \geq q^2-q\}.
$$ 
Meanwhile the summand
$\widehat{S}_{d+q^2-q}$ has as $\FF_q$-basis 
the complementary set of monomials 
$$
\{ x^i y^j: i+j=d+q^2-q \text{ and } j < q^2-q\}
$$
within the set of all monomials 
$
\{ x^i y^j: i+j=d+q^2-q \} 
$
that form an  $\FF_q$-basis for $S_{d+q^2-q}$.
\end{proof}

In analyzing $S_B$, it therefore suffices to analyze $\widehat{S}_B$.

\begin{proposition}
\label{n=2-parabolic-cofixeds}
Within the quotient space $\widehat{S}_B$, one has the following.
\begin{enumerate}
\item[(i)]
$X^i Y^j \equiv X^{i+j-1} Y$ 
for all $i \geq 1$ and $1 \leq j \leq q-1$.
\item[(ii)] There is an $\FF_q$-basis
\begin{equation}
\label{hat-cofixed-basis}
\{ Y, XY, X^2Y, X^3Y,\ldots \} \cup \{1,Y^2,Y^3,\ldots,Y^{q-1}\}.
\end{equation}
\item[(iii)]
There is an $\FF_q[X]$-module direct sum decomposition
$\widehat{S}_P=M_1 \oplus M_2$, where 
\begin{itemize}
\item 
$M_1=\FF_q[X] \cdot Y$ is a free $\FF_q[X]$-module on the basis $\{Y\}$,
and 
\item $M_2$ is the $\FF_q[X]$-submodule spanned by
\begin{equation}
\label{nonfree-parabolic-cofixed-basis-elements}
\{1, \,\, Y^2-XY, \,\, Y^3-X^2Y, \,\, \ldots, \,\, Y^{q-1}-X^{q-2}Y\},
\end{equation}
having $\FF_q[X]$-module structure isomorphic to a
direct sum of copies of the quotient module
$\FF_q[X]/(X)$ with the elements 
of \eqref{nonfree-parabolic-cofixed-basis-elements} as basis.
\end{itemize}
\end{enumerate}
\end{proposition}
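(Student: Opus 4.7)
The plan is to leverage the injection $\widehat{S}_B \hookrightarrow S_B$ provided by Proposition~\ref{Karagueuzian-Symonds-prop} (viewing $\widehat{S}_B$ as $1 \otimes \widehat{S}_B$ inside $\FF_q[D_{2,1}] \otimes \widehat{S}_B \cong S_B$), importing relations already established in $S_B$ by Lemma~\ref{equal-monomomials-in-quotients}. The main bookkeeping check, recurring throughout, is verifying that each transvection-derived identity involves only monomials whose $y$-degree lies below $q^2-q$, placing the identity inside $\widehat{S}$.

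For part (i), I will observe that Lemma~\ref{equal-monomomials-in-quotients}(ii) derives $X^i Y^j = X^{i+j-1}Y$ in $S_B$ by applying $u$ to $x^{d-(jq-1)} y^{jq-1}$. When $1 \leq j \leq q-1$, the maximum $y$-degree $jq-1$ is bounded by $(q-1)q-1 < q^2-q$, so every monomial in the derivation lies in $\widehat{S}$ and the identity descends to $\widehat{S}_B$.

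For part (ii), spanning comes from a three-step reduction of the monomial basis of $\widehat{S}$: (a) $T$-invariance reduces survivors in $\widehat{S}_T$ to $\{X^iY^j : i \geq 0,\ 0 \leq j \leq q-1\}$ (since $Y^j \in \widehat{S}$ requires $j(q-1) < q^2-q$); (b) Lemma~\ref{equal-monomomials-in-quotients}(i) kills $X^k$ for $k \geq 1$ in $\widehat{S}_B$, its underlying relation $u(x^{k-1}y)-x^{k-1}y=x^k$ lying in $\widehat{S}$; (c) part (i) collapses every $X^iY^j$ with $i \geq 1$ and $1 \leq j \leq q-1$ to $X^{i+j-1}Y$, leaving exactly the proposed basis \eqref{hat-cofixed-basis}. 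For linear independence, the functionals $\mu, \nu$ of Proposition~\ref{functionals-that-descend} descend to $\widehat{S}_B$ via the injection; the only degrees where two basis elements coexist are $d = k(q-1)$ for $2 \leq k \leq q-1$, containing $X^{k-1}Y$ and $Y^k$, and these are separated by $\mu$ (values $1$ and $0$) and $\nu$ (values $0$ and $1$), while single-element degrees are trivially independent.

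For part (iii), freeness of $M_1 = \FF_q[X]\cdot Y$ on $\{Y\}$ is immediate from the independence of $\{X^iY\}_{i \geq 0}$ in (ii). Each listed generator of $M_2$ is annihilated by $X$: one has $X \cdot 1 \equiv 0$ by Lemma~\ref{equal-monomomials-in-quotients}(i), and $X(Y^j - X^{j-1}Y) = XY^j - X^jY \equiv 0$ by part (i); hence the $\FF_q[X]$-submodule they generate reduces to their $\FF_q$-span, yielding the claimed $(\FF_q[X]/(X))^{q-1}$-structure. The identity $Y^j = X^{j-1}Y + (Y^j - X^{j-1}Y)$ places every basis element of (ii) inside $M_1 + M_2$, and triviality of $M_1 \cap M_2$ follows because any $f = \sum c_i X^iY$ in $M_1 \cap M_2$ is $X$-torsion, forcing $Xf = \sum c_i X^{i+1}Y = 0$ and so $c_i = 0$ by linear independence. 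The main obstacle throughout is this $y$-degree bookkeeping: the restriction $j \leq q-1$ in part (i) is imposed precisely by the need to keep the derivation inside $\widehat{S}$.
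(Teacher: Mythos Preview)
Your proposal is correct and follows essentially the same approach as the paper: both establish spanning in (ii) via the three-step reduction (torus action, Lemma~\ref{equal-monomomials-in-quotients}(i), then part (i)), prove independence using the functionals $\mu,\nu$, and deduce (iii) by showing the generators of $M_2$ are annihilated by $X$. The only cosmetic differences are that you make the injection $\widehat{S}_B\hookrightarrow S_B$ from Proposition~\ref{Karagueuzian-Symonds-prop} explicit (the paper cites Lemma~\ref{equal-monomomials-in-quotients} directly, leaving this implicit), you add the $y$-degree bookkeeping (which is redundant once the injection is in hand, since only the endpoints $X^iY^j$ and $X^{i+j-1}Y$ need to lie in $\widehat{S}$), and your directness argument for $M_1\cap M_2=0$ uses $X$-torsion rather than the paper's basis-count.
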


\begin{proof}
\vskip.1in
\noindent
{\sf Assertion (i).}
This follows from Lemma~\ref{equal-monomomials-in-quotients}(ii).

\vskip.1in
\noindent
{\sf Assertion (ii).}
We first argue that the  monomials in
\eqref{hat-cofixed-basis}
span $\widehat{S}_B$.  By Definition~\ref{hat-S-definition}, one has that $\widehat{S}$ is 
$\FF_q$-spanned by 
$
\{x^i y^j: i\geq 0 \text{ and  }0 \leq j < q^2-q\}.
$
Since monomials other than those 
of the form $X^i Y^j$ vanish in $S_T$ and
thus in its further quotient $S_B$, one concludes that $\widehat{S}_B$ is
$\FF_q$-spanned by 
$$
\{X^i Y^j: i\geq 0 \text{ and }0 \leq j < q\}.
$$
Lemma~\ref{equal-monomomials-in-quotients}(i) says 
$X^i$ vanishes in $S_B$ for $i\geq 1$, 
so one may discard these monomials and still have a spanning set.  
Also, assertion (i) of the present proposition shows 
that one may further discard monomials of the form 
$X^i Y^j$ with $i \geq 1$ and $j > 1$. 
Thus $\widehat{S}_B$ is $\FF_q$-spanned by 
$$
\{X^i Y\}_{i\geq 0} \cup \{Y^j\}_{0 \leq j \leq q-1},
$$
which is the same set as in \eqref{hat-cofixed-basis}.

To see that these monomials
are $\FF_q$-linearly independent in $\widehat{S}_B$ or $S_B$, this
table shows that they are
separated in each degree by the $\FF_q$-linear
functionals $\mu$ and $\nu$ on $S_B$ from 
Definition~\ref{separating-functionals-definition}
and Proposition~\ref{functionals-that-descend}:

\begin{center}
\begin{tabular}{|c||c|c|c c|c c|c|c c|c|c|c|c|} \hline
degree   & $0$   & $1$  & $2$ &   & $3$ &  & $\cdots$ 
         & $q-1$     &   & $q$       & $q+1$ & $q+2$ & $\cdots$ \\ \hline
monomial & $1$     & $Y$    & $XY$  &$Y^2$  & $X^2Y$&$Y^3$  & $\cdots$
         & $X^{q-2}Y$ & $Y^{q-1}$& $X^{q-1}Y$ & $X^{q-2}Y$ & $X^{q-3}Y$ & $\cdots$ \\ \hline
$\mu$ value& $0$     & $0$    & $1$  &$0$  & $1$&$0$  & $\cdots$
         & $1$ & $0$& $1$ & $1$ & $1$ & $\cdots$ \\\hline
$\nu$ value& $1$     & $1$    & $0$  &$1$  & $0$&$1$  & $\cdots$
         & $0$ & $1$& $0$ & $0$ & $0$ & $\cdots$ \\ \hline
\end{tabular}
\end{center}

\noindent
{\sf Assertion (iii).}
First note that
since $\{Y,XY,X^2Y,X^3Y,\ldots\}$ is a subset of an $\FF_q$-basis for
$\widehat{S}_B$, the submodule $M_1=\FF_q[X] \cdot Y$ indeed forms
a free $\FF_q[X]$-module on the basis $\{Y\}$ inside of
$\widehat{S}_B$.  Since 
$$
\{1\} \cup \{Y^j\}_{2 \leq j \leq q-1}
$$ 
extends 
$\{Y,XY,X^2Y,X^3Y,\ldots\}$ to an $\FF_q$-basis for $\widehat{S}_B$,
so does the set \eqref{nonfree-parabolic-cofixed-basis-elements}
$$
\{1\} \cup \{Y^j-X^{j-1}Y\}_{2 \leq j \leq q-1}.
$$
In particular, none of these elements vanish in $\widehat{S}_B$, 
and $\widehat{S}_B = M_1 + M_2$ where
$M_2$ is the $\FF_q[X]$-span of 
\eqref{nonfree-parabolic-cofixed-basis-elements}.
On the other hand, each element of 
\eqref{nonfree-parabolic-cofixed-basis-elements}
is annihilated on multiplication by $X$:  this holds for the monomial 
$1$ since $X$ vanishes in $S_B$ by 
Lemma~\ref{equal-monomomials-in-quotients}(i), and it holds for
$Y^j-X^{j-1}Y$ with $2 \leq j \leq q-1$
since $XY^j \equiv X^jY$ in $S_B$ by 
Lemma~\ref{equal-monomomials-in-quotients}(ii).
Thus $M_2$ has \eqref{nonfree-parabolic-cofixed-basis-elements}
as an $\FF_q$-basis, and its $\FF_q[X]$-module
structure is that of a free $\FF_q[X]/(X)$-module on this
same basis.  This also shows that one has a {\it direct} sum 
$\widehat{S}_B = M_1 \oplus M_2$.
\end{proof}

The following is immediate from Proposition
\ref{Karagueuzian-Symonds-prop} and \ref{n=2-parabolic-cofixeds}.

\begin{theorem}
\label{Stanley-decomposition-for-B-with-n=2}
One has a direct sum decomposition 
$S_B=M_1' \oplus M_2'$ as modules for $S^B=\FF_q[D_{2,1},X]$, where
\begin{itemize}
\item 
$M_1'$ is a free $\FF_q[D_{2,1},X]$-module on $\{Y\}$, and 
\item
$M_2'$ is a direct sum of copies of the quotient $S^B$-module
$\FF_q[D_{2,1},X]/(X)$ with basis listed in 
\eqref{nonfree-parabolic-cofixed-basis-elements}.
\end{itemize}
In particular, one has
$$
\Hilb(S_B,t) = 
 \frac{t^{q-1}}{(1-t^{q-1})(1-t^{q^2-q})} 
  + \frac{1 + t^{2(q-1)} + t^{3(q-1)} + \cdots + t^{(q-1)^2} }{1-t^{q^2-q}} 
$$
which equals the prediction from
Parabolic Conjecture~\ref{cofixed-conjecture} for $\alpha=(1,1)$, namely
\[
\Hilb(S_B,t)=1  + \frac{t^{q-1}}{1-t^{q-1}} 
   + \frac{t^{2(q-1)}}{1-t^{q-1}} 
   + \frac{t^{q^2+q-2}}{(1-t^{q-1})(1-t^{q^2-q})},
\]
with the four summands corresponding to $\beta=(0,0)$, $(0,1)$, $(1,0)$, $(1,1)$, respectively.
\end{theorem}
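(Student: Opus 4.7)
The plan is to obtain the module decomposition by tensoring together the two structural results already established.  By Proposition~\ref{Karagueuzian-Symonds-prop}, multiplication gives an isomorphism of $S^B[B]$-modules $\FF_q[D_{2,1}] \otimes_{\FF_q} \widehat{S} \cong S$, and hence passing to $B$-cofixed quotients and reinterpreting the $S^B$-action yields an $S^B = \FF_q[D_{2,1}, X]$-module isomorphism
\[
\FF_q[D_{2,1}] \otimes_{\FF_q} \widehat{S}_B \;\cong\; S_B,
\]
where the action of $D_{2,1}$ lives on the first tensor factor and $X$ acts on the second.  Meanwhile, Proposition~\ref{n=2-parabolic-cofixeds}(iii) supplies an $\FF_q[X]$-module direct sum $\widehat{S}_B = M_1 \oplus M_2$, where $M_1 = \FF_q[X] \cdot Y$ is $\FF_q[X]$-free on $\{Y\}$ and $M_2$ is isomorphic to a direct sum of copies of $\FF_q[X]/(X)$ indexed by the basis \eqref{nonfree-parabolic-cofixed-basis-elements}.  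Applying the exact functor $\FF_q[D_{2,1}] \otimes_{\FF_q}(-)$ to this decomposition gives
\[
S_B \;\cong\; \bigl(\FF_q[D_{2,1}] \otimes_{\FF_q} M_1\bigr) \oplus \bigl(\FF_q[D_{2,1}] \otimes_{\FF_q} M_2\bigr) =: M_1' \oplus M_2',
\]
and identifying $\FF_q[D_{2,1}] \otimes_{\FF_q} \FF_q[X] = S^B$ and $\FF_q[D_{2,1}] \otimes_{\FF_q} \FF_q[X]/(X) = S^B/(X)$ immediately yields the asserted structure:  $M_1'$ is $S^B$-free on $\{Y\}$, and $M_2'$ is a direct sum of copies of $S^B/(X)$ with basis \eqref{nonfree-parabolic-cofixed-basis-elements}.

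The Hilbert series then assembles mechanically, since $\deg X = q-1$, $\deg D_{2,1} = q^2-q$, $\deg Y = q-1$, and the basis elements of \eqref{nonfree-parabolic-cofixed-basis-elements} have degrees $0, 2(q-1), 3(q-1), \ldots, (q-1)^2$.  This produces
\[
\Hilb(S_B,t) = \frac{t^{q-1}}{(1-t^{q-1})(1-t^{q^2-q})} + \frac{1 + t^{2(q-1)} + t^{3(q-1)} + \cdots + t^{(q-1)^2}}{1-t^{q^2-q}}.
\]
To finish, I would verify that this equals the prediction of Parabolic Conjecture~\ref{cofixed-conjecture}.  Clearing the common denominator $(1-t^{q-1})(1-t^{q^2-q})$ reduces the identity to the polynomial equation
\[
t^{q-1} + (1-t^{q-1})\bigl(1 + t^{2(q-1)} + \cdots + t^{(q-1)^2}\bigr) \;=\; 1 + t^{2(q-1)} - t^{q(q-1)} \cdot (\text{one term}),
\]
which after telescoping cancellations of the consecutive powers $t^{k(q-1)}$ for $3 \le k \le q-1$ reduces both sides to $1 + t^{2(q-1)} - t^{q^2 - q}$.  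This is a routine computation, and no step should present any real obstacle:  the substantive work was already carried out in Propositions~\ref{Karagueuzian-Symonds-prop} and \ref{n=2-parabolic-cofixeds}, which jointly encode precisely the free-versus-torsion distinction needed for the $S^B$-module structure of $S_B$.
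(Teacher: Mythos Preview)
Your approach is correct and matches the paper's: the paper states this theorem is ``immediate from Proposition~\ref{Karagueuzian-Symonds-prop} and \ref{n=2-parabolic-cofixeds}'', and you have spelled out precisely how those two results combine via the tensor product $\FF_q[D_{2,1}]\otimes_{\FF_q}(-)$ to give the $S^B$-module decomposition. Your Hilbert series verification is also correct (both sides of the identity, after clearing $(1-t^{q-1})(1-t^{q^2-q})$ and substituting $u=t^{q-1}$, reduce to $1+u^2-u^q$), though the placeholder ``$(\text{one term})$'' in your displayed equation should be cleaned up in a final version.
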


\subsection{Analyzing the fixed quotient $S_G$ 
for the full group $G=GL_2(\FF_q)=P_{(2)}$}

One can again regard the polynomial algebra $S$ with its $G$-action as
a module for the group algebra $S^G[G]$ with coefficients in the
$G$-invariant subalgebra $S^G=\FF_q[D_{2,0},D_{2,1}]$.  Our strategy here
in understanding $S_G$ as an $S^G$-module differs from the previous section, 
as we do not have a $G$-stable subspace in $S$ acted on
freely by $D_{2,1}$ to
play the role of the $B$-stable subspace $\widehat{S} \subset S$.
Instead we will work with quotients by $D_{2,1}$.  

\begin{proposition}
\label{D21-quotient-in-either-order}
One has an $S^G$-module isomorphism 
$
\left( S/(D_{2,1}) \right)_G 
\cong 
S_G/ D_{2,1} S_G.
$
\end{proposition}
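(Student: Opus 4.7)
The plan is to prove the proposition as an instance of a general commutation principle: if $R$ is any commutative $\FF_q$-algebra carrying a $G$-action by $\FF_q$-algebra automorphisms, and $f \in R^G$, then there is a natural $R^G$-module isomorphism
\[
(R/(f))_G \cong R_G/fR_G.
\]
Proposition~D.2 is the case $R=S$, $f=D_{2,1}$.

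The strategy is to identify both sides with a common quotient of $R$. Let $N \subset R$ denote the $\FF_q$-linear span of $\{g(r)-r : g \in G, r \in R\}$, so by definition $R_G = R/N$. The first step is the key preliminary observation that $N$ is stable under multiplication by $R^G$: for $h \in R^G$, $g \in G$, $r \in R$ one has
\[
h \cdot (g(r) - r) = g(h) g(r) - h r = g(hr) - hr \in N,
\]
using $g(h)=h$. This both endows $R_G$ with the structure of an $R^G$-module and makes the right-hand side $R_G/fR_G$ well-defined.

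The second step is to compute each side via the third isomorphism theorem. For the right-hand side,
\[
R_G/fR_G \;=\; (R/N)\big/\bigl((N+fR)/N\bigr) \;=\; R/(N+fR).
\]
For the left-hand side, $(R/fR)_G$ is the quotient of $R/fR$ by the $\FF_q$-span of $\{g(\bar r) - \bar r\}$, which is exactly the image $(N+fR)/fR$ of $N$ in $R/fR$, so
\[
(R/fR)_G \;=\; (R/fR)\big/\bigl((N+fR)/fR\bigr) \;=\; R/(N+fR).
\]
Both identifications are visibly $R^G$-linear (using step one), and composing them yields the desired $R^G$-module isomorphism. Specializing to $R=S$ and $f=D_{2,1}$ then gives the proposition.

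There is no real obstacle: the content is entirely formal once one notes that the $G$-action on $S$ commutes with multiplication by $S^G$, which is immediate from the definition of $S^G$. The only mildly subtle point is making sure that the cofixed-space construction and quotienting by $fS$ (respectively $fS_G$) are being interpreted consistently as $S^G$-module operations; the key commutation in step one handles this.
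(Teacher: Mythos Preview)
Your proof is correct and takes essentially the same approach as the paper: both identify each side with the common quotient $S/(D_{2,1}S + N)$, where $N = \spanof_{\FF_q}\{g(f)-f\}_{g \in G, f \in S}$. The paper states this in one line, while you spell out the third isomorphism theorem details and frame it as a general principle, but the content is the same.
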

\begin{proof}
Both are isomorphic to
$
S/( D_{2,1}S + \spanof_{\FF_q}\{g(f) - f\}_{g \in G, f \in S} ).
$
\end{proof}

We wish to first analyze $\left( S/(D_{2,1}) \right)_G$ as
an $S^G$-module.  For this it helps that we already understand 
$\left( S/(D_{2,1}) \right)_B$ as an $S^B$-module, due to the following result.

\begin{proposition}
\label{hat-S-as-section-prop}
The composite map
$
\widehat{S} \hookrightarrow S \twoheadrightarrow S/(D_{2,1})
$
is an isomorphism of $\FF_q[X][B]$-modules, which then induces
an isomorphism of $\FF_q[X]$-modules
$
\widehat{S}_B \cong  (S/(D_{2,1}))_B.
$
\end{proposition}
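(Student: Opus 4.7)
The plan is to derive this proposition essentially formally from Proposition~\ref{Karagueuzian-Symonds-prop}. That result asserts that multiplication gives an $S^B[B]$-module isomorphism $\FF_q[D_{2,1}] \otimes_{\FF_q} \widehat{S} \xrightarrow{\sim} S$. Unpacking this, one obtains the $\FF_q$-vector space decomposition
\[
S = \bigoplus_{k \geq 0} D_{2,1}^k\, \widehat{S}.
\]
First I would verify that this is actually a decomposition of $\FF_q[X][B]$-modules: each summand $D_{2,1}^k\widehat{S}$ is stable under $B$, because $D_{2,1}$ lies in $S^G \subset S^B$ and $\widehat{S}$ is $B$-stable by construction; and each summand is stable under multiplication by $X = x^{q-1}$, because $\widehat{S}$ is $\FF_q[X]$-stable (multiplication by $x$ preserves $y$-degree) and $D_{2,1}$ commutes with everything.

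Second, I would observe that the principal ideal $(D_{2,1}) = D_{2,1} S$ corresponds under this decomposition to $\bigoplus_{k \geq 1} D_{2,1}^k\, \widehat{S}$. Hence the quotient map $S \twoheadrightarrow S/(D_{2,1})$ kills every summand with $k \geq 1$ and identifies the $k = 0$ summand $\widehat{S}$ with its image. Since the decomposition is one of $\FF_q[X][B]$-modules, this identification is an $\FF_q[X][B]$-module isomorphism — which is precisely the assertion that the composite $\widehat{S} \hookrightarrow S \twoheadrightarrow S/(D_{2,1})$ is an isomorphism of $\FF_q[X][B]$-modules.

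Third, I would take $B$-cofixed spaces on both sides. Any $B$-equivariant map of $\FF_q[X][B]$-modules descends to a map of $B$-cofixed spaces which is $\FF_q[X]$-linear, using that $X$ lies in $S^B$ so multiplication by $X$ commutes with the $B$-action and thus descends (as in Proposition~\ref{proposition: simple basis}(ii)). Applying this to the isomorphism from the previous paragraph yields the desired $\FF_q[X]$-module isomorphism $\widehat{S}_B \cong (S/(D_{2,1}))_B$.

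There is no real obstacle in this argument: all of the work has already been done in establishing Proposition~\ref{Karagueuzian-Symonds-prop}, and the present proposition is a straightforward structural consequence of that direct sum decomposition together with the observation that $(D_{2,1})$ is precisely the sum of the positively-indexed summands.
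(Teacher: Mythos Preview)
Your proposal is correct and follows exactly the paper's approach: the paper's proof simply says that the first assertion comes from Proposition~\ref{Karagueuzian-Symonds-prop} and the second follows from the first, and your argument is a faithful unpacking of precisely those two claims.
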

\begin{proof}
The first assertion comes from 
Proposition~\ref{Karagueuzian-Symonds-prop},
and the second assertion follows from the first. 
\end{proof}

\begin{proposition}
\label{D21-quotient-analysis-prop}
The set
\begin{equation}
\label{D21-quotient-generators}
\{1, \,\, XY, \,\, X^2Y, \,\, \ldots, \,\, X^{q-2}Y\} \cup \{ X^q Y\}
\end{equation}
generates $S_G/D_{2,1}S_G$ as a module over $\FF_q[D_{2,0}]$,
and hence generates $S_G$ as module over $\FF_q[D_{2,0},D_{2,1}]=S^G$.
\end{proposition}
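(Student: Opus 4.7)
The plan is to use Proposition~\ref{D21-quotient-in-either-order} to rewrite $S_G/D_{2,1}S_G$ as $(S/(D_{2,1}))_G$, and then analyze this via Proposition~\ref{hat-S-as-section-prop}, which identifies $(S/(D_{2,1}))_B$ with $\widehat{S}_B$. By Proposition~\ref{n=2-parabolic-cofixeds}(ii), $\widehat{S}_B$ has $\FF_q$-basis $\{1, Y, Y^2, \ldots, Y^{q-1}\} \cup \{X^iY : i \geq 1\}$, and $(S/(D_{2,1}))_G$ is the further $\FF_q$-quotient of $\widehat{S}_B$ by the image of $\{\sigma(f) - f : f \in S/(D_{2,1})\}$, where $\sigma$ is the transposition $x \leftrightarrow y$ that generates $G$ over $B$.

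Two key algebraic identities from Proposition~\ref{bivariate-invariants-prop} drive the computation. First, $D_{2,0} = XD_{2,1} - X^{q+1}$ yields $D_{2,0} \equiv -X^{q+1} \pmod{D_{2,1}}$, so multiplication by $D_{2,0}$ on $(S/(D_{2,1}))_G$ acts as multiplication by $-X^{q+1}$, producing $D_{2,0}^j \cdot X^qY \equiv (-1)^j X^{q+j(q+1)}Y$. Second, $(Y-X)D_{2,1} = Y^{q+1} - X^{q+1}$ yields $Y^{q+1} \equiv X^{q+1} \pmod{D_{2,1}}$, which combined with the standard reduction $Y^q \equiv -\sum_{i=1}^{q} X^iY^{q-i}$ lets me write the unique representative $[Y^n] \in \widehat{S}$ in the form: for $n = r(q+1) + s$ with $r \geq 0$ and $0 \leq s \leq q$, one has $[Y^n] = X^{r(q+1)}Y^s$ when $s \leq q-1$, while $[Y^n] = -\sum_{k=1}^{q} X^{r(q+1)+k}Y^{q-k}$ when $s = q$.

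I would then compute $\sigma([Y^n]) - [Y^n]$ in $\widehat{S}_B$, using $\sigma(Y^n) = X^n$ and the simplifications $X^a \equiv 0$ for $a \geq 1$ (Lemma~\ref{equal-monomomials-in-quotients}(i)) and $X^aY^b \equiv X^{a+b-1}Y$ (Proposition~\ref{n=2-parabolic-cofixeds}(i)). A case analysis on $n = r(q+1)+s$ shows the relation is trivial when $s=0$, kills $Y^s$ when $r = 0$ and $1 \leq s \leq q-1$, and kills $X^{n-1}Y$ otherwise. Tracing the ranges over $n$, the killed classes $X^iY$ are exactly those with $i \in \{q-1\} \cup \bigcup_{r \geq 1} \{r(q+1), r(q+1)+1, \ldots, r(q+1)+q-1\}$, that is, all $i \geq q-1$ with $i \not\equiv q \pmod{q+1}$. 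The surviving basis classes of $\widehat{S}_B$ in $(S/(D_{2,1}))_G$ are therefore $\{1\} \cup \{X^iY : 1 \leq i \leq q-2\} \cup \{X^{q+j(q+1)}Y : j \geq 0\}$, and the last piece is precisely the $\FF_q[D_{2,0}]$-orbit of $X^qY$ by the first identity.

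The main obstacle will be the careful bookkeeping in the case analysis, particularly confirming that no additional $\sigma$-relations arise from $f = X^iY^j$ with $i, j \geq 1$. For $i \leq q$ one verifies via Proposition~\ref{n=2-parabolic-cofixeds}(i) (using the $[Y^q]$-formula when $i = q$) that both $X^iY^j$ and its image $X^jY^i$ reduce to $X^{i+j-1}Y$ in $\widehat{S}_B$, making the relation trivial; for $i \geq q+1$ the resulting relations are $\FF_q$-linear combinations of the already computed $\sigma([Y^n])-[Y^n]$ relations, via the product $[X^jY^i] = X^j[Y^i]$ in $S/(D_{2,1})$. Once generation of $(S/(D_{2,1}))_G$ as an $\FF_q[D_{2,0}]$-module is established, the final clause---generation of $S_G$ as an $S^G = \FF_q[D_{2,0}, D_{2,1}]$-module---follows from a standard graded Nakayama-type argument applied to the homogeneous principal element $D_{2,1} \in S^G$.
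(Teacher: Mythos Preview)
Your argument is correct and shares the paper's scaffolding—reducing to $(S/(D_{2,1}))_G$ via Proposition~\ref{D21-quotient-in-either-order}, identifying $(S/(D_{2,1}))_B$ with $\widehat{S}_B$ via Proposition~\ref{hat-S-as-section-prop}, using $D_{2,0}\equiv -X^{q+1}$, and finishing with graded Nakayama—but inside that framework you take a different and more laborious path.  The paper observes that since $D_{2,0}$ acts on $\widehat{S}_B$ as $-X^{q+1}$, the known $\FF_q$-basis $\{1,Y^2,\ldots,Y^{q-1}\}\cup\{X^iY:i\ge 0\}$ of $\widehat{S}_B$ immediately yields the $\FF_q[D_{2,0}]$-spanning set $\{1,Y^2,\ldots,Y^{q-1}\}\cup\{X^iY:0\le i\le q\}$ for $(S/(D_{2,1}))_B$; passing to the $G$-quotient it then discards $Y,Y^2,\ldots,Y^{q-1}$ via Lemma~\ref{equal-monomomials-in-quotients}(iii) and discards $X^{q-1}Y$ by expanding the relation $D_{2,1}\equiv 0$ and invoking Lemma~\ref{equal-monomomials-in-quotients}(i),(ii).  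You instead compute the $\sigma$-relations $\sigma(Y^n)-Y^n$ one by one, using your explicit formula for $[Y^n]\in\widehat{S}$, and determine exactly which $\widehat{S}_B$-basis elements die in $(S/(D_{2,1}))_G$.  This is valid and in fact yields more: you end up with an $\FF_q$-spanning set (indeed a basis) of $(S/(D_{2,1}))_G$, not merely an $\FF_q[D_{2,0}]$-generating set.  Precisely because you only need a spanning claim, your final paragraph about ``no additional $\sigma$-relations from $f=X^iY^j$'' is unnecessary for the Proposition: extra relations could only shrink the quotient, which cannot destroy a spanning set.  The paper's route is shorter; yours buys the additional information that the claimed generators are $\FF_q$-linearly independent modulo $D_{2,1}$.
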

\begin{proof}
The second assertion follows from the first via this
well-known general lemma.

\begin{lemma}
Let $R$ be an $\NN$-graded ring. 
Let $I \subset R_+:=\bigoplus_{d >0} R_d$ be a homogeneous ideal of positive
degree elements.  
Let $M$ be a $\ZZ$-graded $R$-module with nonzero degrees 
bounded below.

Then a subset generates $M$ as an $R$-module
if and only if its images generate $M/IM$  as $R/I$-module.
\end{lemma}
\begin{proof}[Proof of lemma]
The ``only if'' direction is clear.  For the ``if'' direction, one assumes that $\{m_i\}$ in $M$
have $\{m_i+IM\}$ generating $M/IM$ as $R/I$-module, and shows that every
homogeneous element $m$ in $M$ lies in $\sum_i Rm_i$
via a straightforward induction on the degree of $m$.
\end{proof}

Returning to the proof of the first assertion in the proposition, we use
Proposition~\ref{D21-quotient-in-either-order} to
work with $(S/(D_{2,1}))_G$ rather than $S_G/D_{2,1}S_G$.
As noted in Proposition~\ref{bivariate-invariants-prop},
$D_{2,0}=XD_{2,1}-X^{q+1}$, and hence
$$
D_{2,0} \equiv -X^{q+1} \bmod{(D_{2,1})}.
$$
Thus via the quotient  map 
$(S/(D_{2,1}))_B \twoheadrightarrow (S/(D_{2,1}))_G$,
one obtains an $\FF_q[D_{2,0}]$-spanning
set for $(S/(D_{2,1}))_G$ from any $\FF_q[X^{q+1}]$-spanning set 
of $(S/(D_{2,1}))_B$, or equivalently via 
Proposition~\ref{hat-S-as-section-prop}, from any
$\FF_q[X^{q+1}]$-spanning set of $\widehat{S}_B$.
Since $\widehat{S}_B$ has as $\FF_q$-basis the monomials 
$\{X^i Y\}_{i \geq 0} \cup \{1,Y^2,Y^3,\ldots,Y^{q-1}\}$
from \eqref{hat-cofixed-basis}, it has as an
$\FF_q[X^{q+1}]$-spanning set
\[
\{X^i Y\}_{0 \leq i \leq q}  \cup \{1,Y^2,Y^3,\ldots,Y^{q-1}\}.
\]
Thus, this set is an $\FF_q[D_{2,0}]$-spanning
set for $(S/(D_{2,1}))_G$.  
However, Lemma~\ref{equal-monomomials-in-quotients}(iii) says that
the pure powers $\{Y^j\}_{j \geq 1}$ all vanish in $S_G$, so one obtains
this smaller  $\FF_q[D_{2,0}]$-spanning set for $(S/(D_{2,1}))_G$:
$$
\{1, \,\, X Y, \,\,  X^2Y, , \,\, \ldots,  
\,\, X^{q-2}Y, \,\,  X^{q-1}Y, \,\,  X^qY \}.
$$
We claim that the second-to-last element 
$X^{q-1}Y$ on this list is also redundant,
as it vanishes in $(S/(D_{2,1}))_G$.  To see this claim, note that
in $(S/(D_{2,1}))_G$ one has
$$
0 \equiv D_{2,1} = Y^q + (XY^{q-1} + X^2Y^{q-2} 
                    + \cdots + X^{q-2}Y^2 + X^{q-1}Y) + X^q.
$$
Here the two pure powers $X^q, Y^q$ vanish in $S_G$ and also
in $(S/(D_{2,1}))_G$ due to 
Lemma~\ref{equal-monomomials-in-quotients}(i),(iii).
Similarly, the $q-1$ monomials inside the parenthesis 
$X^i Y^{q-i}$ for $i=1,2,\ldots,q-1$ are all equal to $X^{q-1}Y$ due to 
Lemma~\ref{equal-monomomials-in-quotients}(i).
This implies $0\equiv(q-1)X^{q-1}Y=-X^{q-1}Y$ as claimed.
\end{proof}

\begin{theorem}
\label{bivariate-Stanley-decomposition}
One has an $S^G$-module direct sum decomposition 
$S_G = N_1 \oplus N_2$, in which
\begin{itemize}
\item
$N_1=S^G \cdot X^qY$ is a free $S^G$-module on the basis $\{X^q Y\}$,
and 
\item $N_2$ is the $S^G$-submodule spanned by the elements
of \eqref{nonfree-G-cofixed-basis-elements},
whose $S^G$-module structure is a direct sum of $q-1$ copies of $S^G/(D_{2,0})$
with the elements of \eqref{nonfree-G-cofixed-basis-elements} as basis.
\end{itemize}
In particular, in the bivariate case $n=2$, 
Question~\ref{Stanley-decomposition-conjecture} has an affirmative answer, 
and one has
\begin{align*}
\Hilb(S_G,t)
&=
\frac{t^{q^2-1}}{(1 - t^{q^2 - 1})(1 - t^{q^2 - q})} 
+
\frac{1 + t^{2(q-1)} + t^{3(q-1)} + \cdots + t^{(q-1)^2}}
{1 - t^{q^2 - q}} \\
&=1 + \frac{t^{2(q - 1)}}{1 - t^{q - 1}} +
\frac{t^{2(q^2 - 1)}}{(1 - t^{q^2 - 1})(1 - t^{q^2 - q})},
\end{align*}
so that Conjecture~\ref{cofixed-conjecture} holds.
\end{theorem}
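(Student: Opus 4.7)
The plan combines three previously established facts to pin down the $S^G$-module structure of $S_G$ with minimal calculation: (i) Proposition~\ref{D21-quotient-analysis-prop} supplies the generating set $\{1, XY, X^2Y, \ldots, X^{q-2}Y, X^qY\}$ for $S_G$ as an $S^G$-module; (ii) Corollary~\ref{S_G-torsion-corollary} asserts that $D_{2,0}$ annihilates the first $q-1$ of these generators while no power of $D_{2,1}$ annihilates any of them; and (iii) Proposition~\ref{rank-one-prop} asserts $\rank_{S^G} S_G = 1$.  Set $N_1 := S^G \cdot X^qY$ and let $N_2$ be the $S^G$-submodule generated by $\{g_0, g_1, \ldots, g_{q-2}\}$, where $g_0 := 1$ and $g_k := X^kY$ for $1 \leq k \leq q-2$, so that $S_G = N_1 + N_2$ by (i).

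First I would establish that $S_G = N_1 \oplus N_2$ with $N_1$ free of rank one.  Since $D_{2,0} \cdot N_2 = 0$ by (ii), the module $N_2$ is $S^G$-torsion, so $K \otimes_{S^G} N_2 = 0$ for $K := \Frac(S^G)$.  Hence the surjection $K \otimes N_1 \twoheadrightarrow K \otimes S_G$ combined with (iii) forces $\rank_{S^G} N_1 = 1$.  But $N_1$ is cyclic, so $N_1 \cong S^G/\Ann(X^qY)$, and over the integral domain $S^G = \FF_q[D_{2,0}, D_{2,1}]$, a cyclic module of positive rank must have zero annihilator.  Thus $\Ann(X^qY) = 0$ and $N_1$ is $S^G$-free of rank one.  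Torsion-freeness of $N_1$ together with $N_2$ being torsion forces $N_1 \cap N_2 = 0$, giving the direct-sum decomposition $S_G = N_1 \oplus N_2$.

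Next I would pin down the structure of $N_2$.  By (ii), every $g_k$ is $D_{2,0}$-killed, so $N_2$ is a quotient of $\bigoplus_{k=0}^{q-2} \FF_q[D_{2,1}] \cdot g_k$.  The generators $g_0, g_1, \ldots, g_{q-2}$ have $t$-degrees $0, 2(q-1), 3(q-1), \ldots, (q-1)^2$, all strictly less than $\deg(D_{2,1}) = q(q-1)$ and therefore pairwise distinct modulo $\deg(D_{2,1})$.  Hence for $k \neq k'$, the homogeneous elements of $\FF_q[D_{2,1}] \cdot g_k$ and $\FF_q[D_{2,1}] \cdot g_{k'}$ sit in disjoint degree-classes modulo $q(q-1)$, forcing the sum to be direct inside $S_G$; any linear dependence within a single summand $\FF_q[D_{2,1}] \cdot g_k$ is ruled out by the second half of (ii), which says no power of $D_{2,1}$ annihilates $g_k$.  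Thus $N_2 = \bigoplus_{k=0}^{q-2} \FF_q[D_{2,1}] \cdot g_k \cong (S^G/(D_{2,0}))^{q-1}$, as claimed.

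Finally, the Hilbert series formula is immediate from the module structure of $N_1 \oplus N_2$, producing the first displayed expression in the theorem; its equality with the Conjecture~\ref{cofixed-conjecture} form is a routine rational-function identity obtained by combining partial fractions.  This completes an affirmative answer to Question~\ref{Stanley-decomposition-conjecture} for $n=2$ (with $M_1 = \{1, XY, X^2Y, \ldots, X^{q-2}Y\}$ and $M_2 = \{X^qY\}$) and verifies Parabolic Conjecture~\ref{cofixed-conjecture} for $G = P_{(2)}$ in the bivariate case.  The only real subtlety is the rank-theoretic deduction in the second paragraph, which elegantly bypasses a direct proof that no nontrivial polynomial in $D_{2,0}, D_{2,1}$ annihilates $X^qY$; apart from this, the $S^G$-module structure is dictated entirely by the three inputs (i), (ii), (iii).
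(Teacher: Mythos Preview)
Your argument is correct and follows essentially the same route as the paper: both define $N_1,N_2$ as the $S^G$-submodules generated respectively by $\{X^qY\}$ and by the list \eqref{nonfree-G-cofixed-basis-elements}, invoke Proposition~\ref{D21-quotient-analysis-prop} for $S_G=N_1+N_2$, use Corollary~\ref{S_G-torsion-corollary} to see that $N_2$ is $D_{2,0}$-torsion, and then appeal to the rank-one fact (Proposition~\ref{rank-one-prop}) to force $\Ann(X^qY)=0$ and hence $N_1\cong S^G$, after which $N_1\cap N_2=0$ follows from torsion-free versus torsion.

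One small point is worth noting.  The paper asserts in a single sentence that Corollary~\ref{S_G-torsion-corollary} ``already shows that the submodule $N_2$ has the claimed structure''; your degree argument---that $\deg(g_0),\deg(g_1),\ldots,\deg(g_{q-2})$ are pairwise distinct modulo $\deg(D_{2,1})=q(q-1)$, so the cyclic pieces $\FF_q[D_{2,1}]\cdot g_k$ occupy disjoint degrees and therefore sum directly---actually supplies the step the paper leaves implicit.  The corollary by itself only handles each generator separately (nonvanishing under powers of $D_{2,1}$ and annihilation by $D_{2,0}$), so your version is a bit more careful here.
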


\begin{proof}
Define $N_1$, $N_2$ 
to be the $S^G$ submodules of $S_G$ spanned by $\{X^qY\}$
and of the elements of \eqref{nonfree-G-cofixed-basis-elements}, respectively.
Then Proposition~\ref{D21-quotient-analysis-prop} 
implies $S^G = N_1 + N_2$.  Note that 
Corollary~\ref{S_G-torsion-corollary} already shows
that the submodule $N_2$ has the claimed structure.
In particular, $D_{2,0} \cdot N_2=0$, that is, $N_2 \subset \Ann_{S_G}{D_{2,0}}$.

We claim that this forces $N_2=S^G \cdot X^qY \cong S^G$, that is,
no element $f$ in $S^G$ can annihilate $X^q Y$.  
Otherwise, there would be an element $D_{2,0} f$ in $S^G$
annihilating both $N_1, N_2$, and hence annihilating all of
$S_G$, contradicting the assertion from Proposition~\ref{rank-one-prop} that
$S_G$ is a rank one $S^G$-module.

Once one knows $N_2=S^G \cdot X^qY \cong S^G$, one can also conclude
that the sum $S_G=N_1 + N_2$ is direct, since
\[
N_1 \cap N_2 \subset \Ann_{S_G}(D_{2,0}) \cap N_2 = 0. \qedhere
\]
\end{proof}

\begin{remark}
\label{Karagueuzian-Symonds-remark}
Our proof for Parabolic 
Conjectures~\ref{mod-powers-conjecture} and \ref{cofixed-conjecture}
with $n=2$ is hands-on and technical.
One might hope to use more of the results of Karagueuzian and Symonds
\cite{KaragueuzianSymonds1, KaragueuzianSymonds2,  KaragueuzianSymonds3}.
They give a good deal of information about the action of
$G=GL_n(\FF_q)$ on $S=\FF_q[x_1,\ldots,x_n]$, by analyzing
in some detail the structure of $S$ as an $\FF_q U$-module,
where $U$ is the $p$-Sylow subgroup of $G$ consisting of all unipotent
upper-triangular matrices. 
We have not seen how to apply this toward resolving
our conjectures in general.
\end{remark}


\begin{thebibliography}{99}


\bibitem{AkinBuchsbaumWeyman}
K. Akin, D.A. Buchsbaum, and J. Weyman, 
Schur functors and Schur complexes.
{\it Adv. in Math.} {\bf 44} (1982),  207--278.
 


\bibitem{ArmstrongRhoadesR}
D. Armstrong, V. Reiner and B. Rhoades,
Parking spaces, {\tt arXiv:1204.1760}.

\bibitem{AtiyahMacdonald}
M.F. Atiyah and I.G. Macdonald,
Introduction to commutative algebra. 
Addison-Wesley Publishing Co., 1969.

\bibitem{BalagovicChen}
M. Balagovi\'c and H. Chen, 
Representations of rational Cherednik algebras 
in positive characteristic.
{\it J. Pure Appl. Algebra} {\bf 217} (2013), 716--740. 

\bibitem{Benson}
D.J. Benson,
Polynomial invariants of finite groups. 
{\it London Mathematical Society Lecture Note Series} {\bf 190}. 
Cambridge University Press, Cambridge, 1993.


\bibitem{BourguibaZarati}
D. Bourguiba and S. Zarati, 
Depth and the Steenrod algebra.
With an appendix by J. Lannes.
{\it Invent. Math.} {\bf 128} (1997), 589--602. 

\bibitem{CarlisleKuhn}
D.P.~Carlisle and N.J.~Kuhn, 
Subalgebras of the Steenrod algebra and the action of matrices
on truncated polynomial algebras, 
{\it J. Algebra} {\bf 121} (1989) 370--387.


\bibitem{Dickson}
L.E.~Dickson, 
A fundamental system of invariants of the general modular linear group 
with a solution of the form problem.
{\it Trans. Amer. Math. Soc.} {\bf 12} (1911), 75--98.


\bibitem{DotyWalker1}
S. Doty and G. Walker,
The composition factors of ${\bf F}_p[x_1,x_2,x_3]$ as a ${\rm GL}(3,p)$-module. J. Algebra 147 (1992), no. 2, 411--441. 

\bibitem{DotyWalker2}
S. Doty and G. Walker,
Modular symmetric functions and irreducible modular representations of general linear groups. J. Pure Appl. Algebra 82 (1992), no. 1, 1--26. 

\bibitem{DotyWalker3}
S. Doty and G. Walker,
Truncated symmetric powers and modular representations of ${\rm GL}_n$. 
{\it Math. Proc. Cambridge Philos. Soc.} {\bf  119} (1996), 231--242. 

\bibitem{DummitFoote}
D.S. Dummit and R.M. Foote,
Abstract algebra. Third edition. 
John Wiley \& Sons, Inc., Hoboken, NJ, 2004. 


\bibitem{GoldmanRota}
J. Goldman and G.-C. Rota, 
The number of subspaces of a vector space. 
Recent Progress in Combinatorics (Proc. Third Waterloo Conf. on Combinatorics, 1968),
pp. 75--83. Academic Press, New York, 1969.


\bibitem{HartmannShepler}
J. Hartmann and A.V. Shepler,
Reflection groups and differential forms.
{\it Math. Res. Lett.} {\bf 14} (2007), 955--971.



\bibitem{Hewett}
T.J. Hewett, 
Modular invariant theory of parabolic subgroups of 
${\rm GL}\sb n( F\sb q)$ and the associated Steenrod modules. 
{\it Duke Math. J.} {\bf 82} (1996), 91--102;
with a correction in {\it Duke Math. J.} {\bf 97} (1999), 217.



\bibitem{KaragueuzianSymonds1}
D. Karagueuzian and P. Symonds, 
The module structure of a group action on a polynomial ring. 
{\it J. Algebra} {\bf 218} (1999), no. 2, 672--692. 
 
\bibitem{KaragueuzianSymonds2}
D. Karagueuzian and P. Symonds,
The module structure of a group action on a polynomial ring: 
examples, generalizations, and applications. 
Invariant theory in all characteristics, 139--158, 
{\it CRM Proc. Lecture Notes} {\bf 35}, Amer. Math. Soc., Providence, RI, 2004. 

\bibitem{KaragueuzianSymonds3}
D. Karagueuzian and P. Symonds
The module structure of a group action on a polynomial ring: 
a finiteness theorem. 
{\it J. Amer. Math. Soc.} {\bf 20} (2007), 931--967.

\bibitem{Kuhn}
N.J. Kuhn, 
The Morava K-theories of some classifying spaces.
{\it Trans. Amer. Math. Soc.} {\bf 304} (1987), 193--205. 

\bibitem{KuhnMitchell}
N. Kuhn and S. Mitchell,
The multiplicity of the Steinberg representation of ${\rm GL}\sb n F\sb q$ in the symmetric algebra.
{\it Proc. Amer. Math. Soc.} {\bf 96} (1986), no. 1, 1--6.


\bibitem{LandweberStong}
P. S. Landweber and R. E. Stong, 
The depth of rings of invariants over finite fields,
{\it Springer Lect. Notes in Math.} {\bf 1240} (1987), 259--274.

\bibitem{Lang}
 S. Lang, 
 Algebra. Revised third edition.
 {\it Graduate Texts in Mathematics} {\bf  211}. Springer-Verlag, New York, 2002.


\bibitem{Mui}
H. Mui,
Modular invariant theory and cohomology algebras of symmetric groups.  
{\it J. Fac. Sci. Univ. Tokyo Sect. IA Math.}  {\bf 22}  (1975), no. 3, 319--369

\bibitem{StantonR}
V. Reiner and D. Stanton,
$(q,t)$-analogues and $GL_n(\FF_q)$. 
{\it J. Algebraic Combin.}{\bf 31} (2010), 411--454. 


\bibitem{StantonWhiteR}
V. Reiner, D. Stanton, and D. White,
The cyclic sieving phenomenon. 
{\it J. Combin. Theory Ser. A} {\bf 108} (2004), 17--50. 


\bibitem{Rhoades}
B. Rhoades,
Parking structures:  Fuss analogs,
{\tt arXiv:1205.4293}.

\bibitem{Segal}
J. Segal,
Notes on invariant rings of divided powers. 
Invariant theory in all characteristics, 229--239,
{\it CRM Proc. Lecture Notes} {\bf 35}, Amer. Math. Soc., Providence, RI, 2004. 

\bibitem{Serre}
J.-P. Serre, 
Linear representations of finite groups.
{\it Graduate Texts in Mathematics} {\bf Vol. 42}. 
Springer-Verlag, New York-Heidelberg, 1977.

\bibitem{Smith}
L. Smith,
Polynomial invariants of finite groups. 
{\it Research Notes in Mathematics} {\bf 6}. 
A K Peters, Ltd., Wellesley, MA, 1995. 

\bibitem{Solomon}
L. Solomon,
Invariants of finite reflection groups.
{\it Nagoya Math. J.} {\bf 22} (1963), 57--64. 







\bibitem{Steinberg}
R. Steinberg,
On Dickson's theorem on invariants.
{\it J. Fac. Sci. Univ. Tokyo Sect. IA Math.} {\bf 34} (1987), 699--707.

\bibitem{Wood1}
R.M.W.~Wood, 
Modular representations of $GL(n,\FF_p)$ and homotopy theory. 
{\it Algebraic topology, G\"ottingen 1984}, 188--203,
{\it Lecture Notes in Math.} {\bf 1172} Springer, Berlin, 1985. 

\bibitem{Wood2}
R.M.W.~Wood, 
Problems in the Steenrod algebra. 
{\it Bull. London Math. Soc.} {\bf 30} (1998), 449--517. 


\end{thebibliography}
\end{document}